\newtheorem{theorem}{Theorem}
\newtheorem{corollary}[theorem]{Corollary}
\newtheorem{lemma}[theorem]{Lemma}
\newtheorem{proposition}[theorem]{Proposition}
\newtheorem{remark}[theorem]{Remark}
\newtheorem{definition}[theorem]{Definition}
\newtheorem{example}[theorem]{Example}
\DeclareMathOperator{\argmin}{argmin}
\DeclareMathOperator{\X}{X}
\DeclareMathOperator{\Y}{Y}
\def\<#1,#2>{\left<#1,#2\right>}
\let\bar\overline
\DeclareMathOperator{\MM}{MM}
\renewcommand\phi{\varphi}
\renewcommand\epsilon{\varepsilon}
\def\PP{{\cal P}}
\title{{Transport type metrics on the space of probability measures involving singular base measures}\footnote{B.P. is pleased to acknowledge the support of National Sciences and Engineering Research Council of Canada Discovery Grant number  04658-2018.  He is also grateful for the kind hospitality at the Institut de Mathématiques d'Orsay, Université Paris-Sud during his stay in November of 2019 as a missionaire scientifique invité, when this work was partially completed. L.N. would like to acknowledge the support from the project MAGA ANR-16-CE40-0014 (2016-2020) and the one from the CNRS PEPS JCJC (2021).   They are also indebted to two anonymous referees for their many insightful comments on an earlier draft of this work.}}
\author{
Luca Nenna\thanks{Université Paris-Saclay, CNRS, Laboratoire de mathématiques d’Orsay, 91405, Orsay, France.  \texttt{luca.nenna@universite-paris-saclay.fr}}  
\and 
Brendan Pass\thanks{Department of Mathematical and Statistical Sciences, 632 CAB, University of Alberta, Edmonton, Alberta, Canada, T6G 2G1  \texttt{pass@ualberta.ca}} 
}
\begin{document}

\maketitle
 
 \begin{abstract}
  We develop the theory of a metric, which we call the $\nu$-based Wasserstein metric and denote by $W_\nu$, on the set of probability measures $\mathcal P(X)$ on a domain $X \subseteq \mathbb{R}^m$.  This metric is based on a slight refinement of the notion of generalized geodesics with respect to a base measure $\nu$ and is relevant in particular for the case when $\nu$ is singular with respect to $m$-dimensional Lebesgue measure; it is also closely related to the concept of linearized optimal transport.    The $\nu$-based Wasserstein metric is defined in terms of an iterated variational problem involving optimal transport to $\nu$; we also characterize it in terms of integrations of classical Wasserstein metric between the conditional probabilities when measures are disintegrated with respect to optimal transport to $\nu$, and through limits of certain multi-marginal optimal transport problems.
 We also introduce a class of metrics which are dual in a certain sense to $W_\nu$, defined relative to a fixed based measure $\mu$, on the set of measures which are absolutely continuous with respect to a second fixed based measure $\sigma$.
  
 As we vary the base measure $\nu$, the $\nu$-based Wasserstein metric interpolates between the usual quadratic Wasserstein metric (obtained when $\nu$ is a Dirac mass) and a metric associated with the uniquely defined generalized geodesics obtained when $\nu$ is sufficiently regular (eg, absolutely continuous with respect to Lebesgue).  When $\nu$ concentrates on a lower dimensional submanifold of $\mathbb{R}^m$, we prove that the variational problem in the definition of the $\nu$-based Wasserstein metric has a unique solution.  We also establish geodesic convexity of the usual class of functionals,   and of the set of source measures $\mu$ such that optimal transport between $\mu$ and $\nu$ satisfies a strengthening of the generalized nestedness condition introduced in \cite{McCannPass20}.   
We finally introduce a slight variant of the dual metric mentioned above in order to prove convergence of an iterative scheme to solve a variational problem arising in game theory.
 \end{abstract}

 \tableofcontents

 \section{Introduction}
 Given two probability measures $\mu_0$ and $\mu_1$ on a convex, bounded domain $X \subseteq \mathbb{R}^m$, the Wasserstein distance between them is defined as the infimal value in the Monge-Kantorovich optimal transport problem; that is, 
 \begin{equation}\label{eqn: ot with quadratic cost}
 W_2(\mu_0,\mu_1) :=\sqrt{\inf_{\gamma \in \Pi(\mu_0,\mu_1)}\int_{X \times X}|x_0-x_1|^2d\pi(x_0,x_1)}
 \end{equation}
 where the infimum is over the set $\Pi(\mu_0,\mu_1)$ of joint measures $\gamma$ on $X \times X$ whose marginals are $\mu_0$ and $\mu_1$.

 Among the many important properties of the Wasserstein distance (reviewed in \cite{santambook}\cite{Villani-TOT2003} and \cite{Villani-OptimalTransport-09} for example) is the fact that it is a metric on the set $\mathcal P(X)$ of probability measures on $X$.  In turn, the geodesics induced by this metric, known as \emph{displacement interpolants}  and introduced in \cite{mccann1997convexity}, play key roles in many problems, both theoretical and applied.    Variants of displacement interpolants, known as \emph{generalized geodesics} (introduced in \cite{AmbrosioGradientFlows2008}, see Definition \ref{def: generalized geodesics} below), are a natural and important tool in the analysis of problems involving a fixed base measure $\nu \in \mathcal P(X)$.  For example, a variety of problems in game theory, economics and urban planning consist in minimizing functionals on $ \mathcal P(X)$ involving optimal transport to a fixed $\nu$. In particular, we mention the optimal transport based formulation of Cournot-Nash equilibria in game theory of Blanchet-Carlier, in which $\nu$ parameterizes a population of players and one searches for their equilibrium distribution of strategies, parameterized by $\mu \in \mathcal P(X)$ \cite{blanchet2014remarks}\cite{abgcptrl}\cite{abgcmor}.  When $\nu$ is absolutely continuous with respect to the Lebesgue measure $\mathcal{L}^m$ on $X$, these interpolants are uniquely defined, and are in fact geodesics for a metric under which $\mathcal P(X)$   is isomorphic to a subset of the Hilbert space $L^2(X,\mathbb{R}^n)$; that is, under this isomorphism, generalized geodesics are mapped to line segments.  
 
  On the other hand, it is natural in certain problems to consider \emph{singular} base measures\footnote{For instance, in game theory problems derived from spatial economics, $\nu$ may represent a population of players, parametrized by their location $y \in \mathbb{R}^2$ \cite{abgcmor}; it is often the case that the population is essentially concentrated along a one dimensional subset, such as a major highway or railroad.}, and our goal here is to initiate the development of a framework to study these. We study a metric on an appropriate subset of $\mathcal P(X)$ for each choice of base measure $\nu$, which we call the \textit{$\nu$-based Wasserstein metric} (see Definition \ref{def: nu based definition}); essentially, this metric arises from minimizing the average squared distance among all couplings of $\mu_0$ and $\mu_1$ corresponding to generalized geodesics.  It is also very closely related to the concept of linear optimal transport, introduced in \cite{WangSlepcevBasuOzolekRohde13}\footnote{In fact, in the case we are most interested in, when there exists a unique optimal transport between $\nu$ and each of the measures to be compared, the $\nu$-based Wasserstein metric coincides with the metric derived from solving the linear optimal transport problem, denoted by $d_{LOT,\nu}$ in \cite{WangSlepcevBasuOzolekRohde13}.  They differ slightly for more general measures; in this case, the $\nu$-based Wasserstein metric is in fact only a semi-metric.}. Linear optimal transport  was mainly used in \cite{WangSlepcevBasuOzolekRohde13} for image comparison problems, and the base measure $\nu$ was often taken to be discrete.  In that work, and, to the best of our knowledge, in the considerable literature on linear optimal transport following from it, the theoretical properties of the metric, especially for base measures supported on lower dimensional submanifolds, to which we pay special attention below, have largely been left undeveloped. 
  
    Geodesics with respect to the $\nu$-based Wasserstein metric will always be generalized geodesics with respect to $\nu$, but, for different structures of $\nu$, very different geometry is induced.  In particular, when $\nu$ is absolutely continuous, we obtain the Hilbert space geometry discussed above, whereas when $\nu$ is a Dirac mass, we obtain the Wasserstein metric, independently of the $y$ parameterizing $\nu = \delta_y$ within this class. We pay special attention to the cases in between, in particular when $\nu$ concentrates on a lower dimensional submanifold of $\mathbb{R}^m$, in which case the problem has a natural interplay with the unequal dimensional optimal transport problem explored in \cite{ChiapporiMcCannPass15p}, \cite{PassM2one}, \cite{McCannPass20} and \cite{NennaPass1}. In the particular case when $\nu$ concentrates on a line segment, we show that our metric coincides with the layerwise-Wasserstein metric (see \eqref{eqn: layerwsie Wasserstein def}) introduced in \cite{KimPassSchneider19} to analyze anisotropic data such as plants' root shapes. 
 
 We establish three equivalent characterizations of the $\nu$-based Wasserstein metric, roughly speaking:
 \begin{enumerate}
\item as an optimal transport problem restricted to couplings which are correlated along $\nu$ (we take this as the definition);
 \item by optimally coupling conditional probabilities of $\mu_0$ and $\mu_1$ after disintegrating with respect to optimal transport to $\nu$;
  \item as limits of multi-marginal optimal transport between $\mu_0$, $\mu_1$ and $\nu$.
  \end{enumerate}
    In many cases of interest, we establish uniqueness of the corresponding geodesics (although uniqueness of generalized geodesics for regular base measures was established in \cite{AmbrosioGradientFlows2008}, they are generally not unique when the base measure is singular).   We also study geodesic convexity of several functionals which play a key role in optimal transport research;  many of these functionals were originally introduced by McCann \cite{mccann1997convexity}, who established their displacement convexity. Using the standard terminology (see Section \ref{sect: convexity} for the definition of these terms) convexity  along $W_\nu$ geodesics  of potential energies, interaction energies and the Wasserstein distance to $\nu$ follow immediately from known results ( namely, their convexity along any generalized geodesic, established in \cite{AmbrosioGradientFlows2008}), whereas convexity along $W_\nu$ geodesics of the internal energy, under certain conditions, requires a new proof (convexity of the internal energy is known to hold along some, but not all, generalized geodesics \cite{AmbrosioGradientFlows2008}).   We note that this applies in particular to the layerwise-Wasserstein distance, yielding a far reaching improvement to Corollary 4.2 in \cite{KimPassSchneider19}.  We also show that when $\nu$ concentrates on a lower dimensional submanifold,  the set of measures $\mu$ for which the model $(|x-y|^2,\mu,\nu)$ satisfies a strengthening of the generalized nested condition ( see Definition \ref{def: generalized nestedness}) is geodesically convex (we recall that nestedness, introduced in \cite{PassM2one} and its higher dimensional generalization from \cite{McCannPass20}, are important properties of unequal dimensional  optimal transport problems; when present, they  greatly simplify analysis of these problems). 
 
 We also introduce a class of metrics which is in a certain sense dual to the $\nu$-based Wasserstein metric, relevant in the case when the measure $\mu$ on $X \subset \mathbb{R}^m$ is fixed, and one would like to interpolate between measures on a fixed, lower dimensional submanifold $Y$ (see the precise definition \eqref{eqn: dual metric}). This is often the case in a variety of applications.  The seemingly natural choice, generalized geodesics with base $\mu$, which essentially interpolates between Kantorovich potentials,  namely the optimal dual variable of the optimal transport problem, on the $X$ side, does not generally result in interpolants supported on $Y$. Here, we instead compare and interpolate between Kantorovich potentials on the $Y$ side in order to compare and interpolate between measures on $Y$.  This is in a certain sense complementary, or dual, to our original metric on $\mathcal P(X)$ (which involves comparing potentials on $Y$, in order to compare measures on $X$ -- with an additional embedded optimization problem, since potentials on $Y$ do not uniquely determine measures supported on $X$).
 We, finally, show how the ideas introduced here can be applied to prove convergence of computational methods to find equilibria in some game theoretic models.
 In particular we identify conditions under which equilibria  are fixed points of a contractive mapping, implying uniqueness of the equilibrium (although this is easily deduced by other methods as well) and, perhaps more importantly, that it can be computed by iterating the mapping.
 This iteration had already been introduced as a method of computation by Blanchet-Carlier when $m=1$, but without a proof of convergence, and in higher dimensions, with a proof of convergence but for simpler interaction terms \cite{blanchet2014remarks}.  Here, we prove that the relevant mapping is a contraction with respect to a variant of the dual metric described above\footnote{In fact, the actual metric used in the proof differs slightly from the dual metric in general, although they coincide under certain conditions.}.  
 
 
 The manuscript is organized as follows.  In the next section we introduce the $\nu$-based Wasserstein metric, establish several basic properties of it, and also introduce our class of dual metrics. In Section 3, we recall relevant facts about unequal dimensional optimal transport, and prove a new lemma on the structure of optimal plans which will be crucial in subsequent sections.  In the fourth section, we identify conditions under which the variational problem arising in the definition of the $\nu$-based Wasserstein metric has a unique solution, and establish a result on the structure of geodesics for the $\nu$-based Wasserstein metric.  We use this structure in Section 5 to establish geodesic convexity results.  In the sixth section, we identify conditions under which certain game theoretic equilibria can be characterized by fixed points of a contractive mapping with respect to an appropriate metric.
\section{Definition and basic properties}
Throughout this paper, for a given set $S$, a semi-metric will be a function $d:S \times S \rightarrow [0,\infty)$ satisfying:
	\begin{itemize}
		\item $d(x,y)=0$ if and only if $x=y$ (separation).
		\item $d(x,y) =d(y,x)$ for all $x,y \in S$ (symmetry).
	\end{itemize}

A semi-metric $d$ will be called a metric if it also satisfies:
\begin{itemize}
	\item $d(x,y) \leq d(x,z) +d(y,z)$ for all $x,y,z \in S$ (triangle inequality).
\end{itemize}
We will often refer to $d(x,y)$ as the distance between $x$ and $y$.
\subsection{Background on optimal transport}
In what follows, $\nu \in \mathcal P(X)$ will be a fixed reference measure on a  convex, bounded\footnote{We have chosen to work on a bounded set $X$ here mostly out of technical convenience and to keep the presentation simple.  We expect that most results can be extended to unbounded domains under appropriate hypotheses (for instance, decay conditions on the measures).  } domain $X \subseteq \mathbb{R}^m$.  Given measures $\mu_0,\mu_1 \in \mathcal P(X)$, we denote by $\Pi(\nu,\mu_i)$ the set of probability measures on $X \times X$ whose marginals are $\nu$ and $\mu_i$, for $i=0,1$ and by $\Pi(\nu,\mu_0, \mu_1)$ the set of probability measures on $X \times X \times X$ with $\nu,\mu_0$ and $\mu_1$ as marginals.  For a measure $\gamma \in \mathcal P(X\times X\times X)$, we will denote its first, second and third marginals by $\gamma_y, \gamma_{x_0}$ and $\gamma_{x_1}$, respectively.  Similarly, we will denote by $\gamma_{yx_0}$, $\gamma_{yx_1}$ and $\gamma_{x_0x_1}$ its projections onto the appropriate  product $X \times X$; for example, $\gamma_{yx_0}$ is the push forward of $\gamma$ under the mapping $(y,x_0,x_1) \mapsto (y,x_0)$.

The optimal transport problem between $\mu_i$ and $\nu$, for a general continuous cost function $c:\bar X \times \bar X \rightarrow \mathbb{R}$ is to minimize over $\pi \in \Pi(\mu_i,\nu)$
	\begin{equation}\label{eqn: ot with general cost}
		\int_{X\times X}c(x_i,y)d\pi(x_i,y).
	\end{equation}
We will most often be interested in the quadratic cost function, $c(x_i,y) = |x_i-y|^2$,  where $|\cdot|$ denotes here the standard euclidean norm. Notice that in the quadratic case, \eqref{eqn: ot with general cost} is the problem arising in \eqref{eqn: ot with quadratic cost}.  In addition, \eqref{eqn: ot with general cost} has a well known dual problem; the minimum in \eqref{eqn: ot with general cost} is equal to the maximum of
\begin{equation}\label{eqn: ot dual}
	\int_X u(x)d\mu_i(x) +\int_Xv(y)d\nu(y) 
\end{equation}
among pairs of functions $(u,v) \in L^1(\mu_i) \times L^1(\nu)$ with $u(x) + v(y) \leq c(x,y)$.  It is well known that maximizers to \eqref{eqn: ot dual} exist and they may be taken to be $c$-conjugate; that is,
$$
u(x) =v^c(x):=\min_{y \in X}[c(x,y) - v(y)],\text{   }v(y) =u^c(y):=\min_{x \in X}[c(x,y) - u(x)].
$$ 
We let $\Pi_{opt}(\nu, \mu_i)$ be the set of optimal couplings between $\nu$ and $\mu_i$ with respect to optimal transport for the quadratic cost function; that is:
\begin{equation}\label{eqn: optimal matching}
\Pi_{opt}(\nu, \mu_i):=\argmin_{\pi \in \Pi(\nu,\mu_i)} \int_{X\times X}|x_i-y|^2d\pi(x_i,y).
\end{equation}

 \begin{definition}
 A \emph{Wasserstein geodesic} between $\mu_0$ and $\mu_1$ (also known as a \emph{displacement interpolant}, using the terminology originally introduced by McCann \cite{mccann1997convexity}) is a curve $\mu_t$ in $\mathcal P(X)$ of the form $\mu_t=((1-t)e_0 +te_1)_\#\pi$ for some $\pi \in \Pi_{opt}(\mu_0, \mu_1)$, where for $i=0,1$, $e_i:X \times X \rightarrow X$ is defined by $e_i(x_0,x_1) =x_i$.  
 
 \end{definition}


Recall that, in general, a curve $z_t$ parametrized by $t \in [0,1]$ in a metric space $(Z,d)$ is a \textit{minimizing geodesic} if  $d(z_s,z_t) = |s-t|d(z_0,z_1)$ for all $s,t \in [0,1]$.  It is well known that the Wasserstein geodesic defined above is in fact a geodesic in this sense for the Wasserstein metric.

We now give the definition of generalized geodesics from \cite{AmbrosioGradientFlows2008}.
\begin{definition}
\label{def: generalized geodesics}
A generalized geodesic with base measure $\nu$ from $\mu_0$ to $\mu_1$ is a curve $\mu_t$ in  $\mathcal P(X)$ of the form $\mu_t=((1-t)e_0 +te_1)_\#\gamma$ for some $\gamma \in \mathcal{P}(X \times X \times X)$ where $\gamma_{yx_i} \in  \Pi_{opt}(\nu, \mu_i)$ for $i=0,1$.
\end{definition}

We note that we are especially interested here in the case where $\nu$ is singular with respect to Lebesgue measure.  Even in this case, $\Pi_{opt}(\nu, \mu_i)$ will very often consist of a single probability measure; in fact, by Brenier's theorem this is the case as soon as $\mu_i$ is absolutely continuous with respect to Lebesgue measure \cite{brenier1991polar}. It will turn out that our definition below is not a metric on all of $\mathcal P(X)$, but is when restricted to the set $\mathcal P_\nu^u(X)$ of probability measures on $X$ for which the solution to the optimal transport problem to $\nu$ is unique; that is, the set such that $\Pi_{opt}(\nu,\mu)$ is a singleton.

Before closing this section, we recall the well known notion of disintegration of measures.  Given $\pi_i \in \Pi(\nu,\mu_i)$, we disintegrate with respect to $\nu$. There exists a measurable collection $\mu_i^y$ of probability measures on $X$, indexed by $y$, such that 
	$$
	\pi_i(x,y) = \nu(y) \otimes \mu_i^y(x);
	$$
	that is, for every bounded continuous function $g$ on $X \times X$, we have
	$$
	\int_{X \times X}g(x,y)d\pi_i(x,y) = \int_X\bigg[\int_Xg(x,y)d\mu_i^y(x)\bigg ]d\nu(y).
	$$
	We will sometimes call $\mu_i^y$ the \emph{conditional probability} of $\pi_i$ given $y$.

\subsection{The $\nu$-based Wasserstein metric}
We now define our metric with base point $\nu$ as follows.

\begin{definition}\label{def: nu based definition}
	Let $\nu \in \mathcal P(X)$.  For $\mu_0,\mu_1 \in \mathcal P(X)$, we define the \textit{$\nu$-based Wasserstein metric} as
	\begin{equation}\label{eqn: metric definition}
	W_\nu(\mu_1,\mu_0):=\sqrt{\inf_{\gamma \in\Gamma }\int_{X\times X\times X}|x_0-x_1|^2d\gamma(y,x_0,x_1)},
	\end{equation}
	where $\Gamma:=\{ \gamma\in\mathcal P(X \times X \times X)\;|\;\gamma_{yx_i}\in \Pi_{opt}(\mu_i,\nu), i=0,1\}$.
\end{definition}
\begin{remark}
	As we will see below, the name  $\nu$-based Wasserstein metric is a slight abuse of terminology, since  $W_\nu$ is only a semi-metric on $\mathcal{P}(X)$ in general, although it is a metric on an appropriate subset.  
\end{remark}
Note that the glueing lemma (see \cite{santambook}[Lemma 5.5], for example) implies the existence of a $\gamma \in \Pi(\nu,\mu_0,\mu_1)$ such that $\gamma_{yx_i}  \in \Pi_{opt}(\nu, \mu_i)$ for $i=0,1$; therefore, $W_\nu$ is well defined. Standard arguments imply the existence of a minimizing $\gamma$ in \eqref{eqn: metric definition}.
\begin{remark}
	This definition is closely related to the concept of linear optimal transport, introduced in \cite{WangSlepcevBasuOzolekRohde13}.  The difference is that in linear optimal transport, a \emph{fixed} optimal transport $\pi_{\mu_i} \in \Pi_{opt}(\nu,\mu_i)$ is selected for each $\mu_i$ (see equation (3) in \cite{WangSlepcevBasuOzolekRohde13}), whereas in the definition of $W_\nu$ one minimizes over the entire set $\Pi_{opt}(\nu,\mu_i)$.  For $\mu_i \in \mathcal{P}^u_\nu(X)$, the two concepts clearly coincide, and it is on this set that $W_\nu$ yields a metric (see Lemma \ref{lem: metric for unique ot} below).  Outside of this set, $W_\nu$ still yields a semi-metric, whereas linear optimal transport might be better described as defining a metric on the selected $\pi_{\mu_i} \in \Pi_{opt}(\nu,\mu_i)$, since it is dependent on these choices; this idea will be developed briefly in Remark \ref{rem: metric on couplings} below, but first let us rigorously establish that $W_\nu$ is indeed a semi-metric.
\end{remark}

The proof of the following Lemma is very similar to the proof that the classical Wasserstein metric is in fact a metric (see for example,  \cite{santambook}[Proposition 5.1]). Recall that a semi-metric satisfies the symmetry and identity of indiscernibles axioms in the definition of a metric, but does not satisfy the triangle inequality.
\begin{lemma}\label{lem: metric for unique ot}
	$W_\nu$ is a semi-metric on $\mathcal P(X)$. It is a metric on $\mathcal P_\nu^u(X)$.
\end{lemma}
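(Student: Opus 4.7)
The plan is to verify the three semi-metric/metric axioms one by one, with increasing difficulty, and to see clearly at which step the assumption $\mu_i \in \mathcal P^u_\nu(X)$ becomes essential.

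Symmetry is immediate from the definition, since the cost $|x_0-x_1|^2$ is symmetric in $(x_0,x_1)$ and the set $\Gamma$ is invariant under the swap $(y,x_0,x_1)\mapsto(y,x_1,x_0)$. For separation, one direction is trivial: if $\mu_0=\mu_1$, pick any $\pi\in\Pi_{opt}(\nu,\mu_0)$ and let $\gamma$ be the pushforward of $\pi$ under $(y,x)\mapsto(y,x,x)$; then $\gamma\in\Gamma$ and the cost vanishes. Conversely, if $W_\nu(\mu_0,\mu_1)=0$, then any minimizer (which exists by the standard direct method, since $\Gamma$ is nonempty by the glueing lemma and weakly closed, and the cost is continuous and bounded on the compact space $X\times X\times X$) is concentrated on $\{x_0=x_1\}$; projecting onto the second coordinate then forces $\mu_0=\mu_1$.

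For the triangle inequality on $\mathcal P^u_\nu(X)$, the key idea is to reduce to the classical triangle inequality for $W_2$ by disintegrating with respect to $\nu$. Given $\mu_0,\mu_1\in\mathcal P^u_\nu(X)$, uniqueness forces $\gamma_{yx_0}$ and $\gamma_{yx_1}$ of any admissible $\gamma\in\Gamma$ to be the \emph{same} (namely the unique) optimal couplings $\pi_i\in\Pi_{opt}(\nu,\mu_i)$, so writing $\pi_i=\nu\otimes\mu_i^y$ and disintegrating $\gamma=\nu\otimes\gamma^y$, one has $\gamma^y\in\Pi(\mu_0^y,\mu_1^y)$, and minimizing over $\gamma$ is equivalent to minimizing over $\gamma^y$ for $\nu$-a.e.\ $y$. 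This yields the fiberwise formula
\begin{equation*}
W_\nu^2(\mu_0,\mu_1)=\int_X W_2^2(\mu_0^y,\mu_1^y)\,d\nu(y).
\end{equation*}
Given a third measure $\mu_2\in\mathcal P^u_\nu(X)$ with disintegration $\mu_2^y$, the triangle inequality for $W_2$ gives $W_2(\mu_0^y,\mu_2^y)\le W_2(\mu_0^y,\mu_1^y)+W_2(\mu_1^y,\mu_2^y)$ for $\nu$-a.e.\ $y$, and Minkowski's inequality in $L^2(\nu)$ then gives $W_\nu(\mu_0,\mu_2)\le W_\nu(\mu_0,\mu_1)+W_\nu(\mu_1,\mu_2)$.

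The main obstacle — and the reason $W_\nu$ is only a semi-metric on all of $\mathcal P(X)$ — is that without uniqueness of $\pi_i\in\Pi_{opt}(\nu,\mu_i)$, the three pairwise variational problems used in proving triangle inequality need not share a common optimal coupling to $\nu$, so the disintegration trick above breaks down. I will also need to justify measurability of $y\mapsto\mu_i^y$ and of $y\mapsto W_2(\mu_i^y,\mu_j^y)$ before writing the fiberwise integral, both of which are standard once the disintegration theorem is invoked and $W_2$ is recognized as a lower semicontinuous function of its arguments in the weak topology. Finally, a short remark can record that the fiberwise formula also gives a clean, self-contained identification of geodesics on $\mathcal P^u_\nu(X)$ as fiberwise Wasserstein interpolants, though this is not needed for the lemma itself.
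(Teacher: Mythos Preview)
Your proof is correct, but the triangle-inequality step takes a genuinely different route from the paper's. The paper argues directly by glueing: given optimal $\gamma_1$ for $W_\nu(\mu_0,\mu_1)$ and $\gamma_2$ for $W_\nu(\mu_0,\mu_2)$, uniqueness forces $(\gamma_1)_{yx_0}=(\gamma_2)_{yx_0}$, so one glues along the common pair $(y,x_0)$ to a measure $\gamma\in\mathcal P(X^4)$ and reads off the triangle inequality from Minkowski in $L^2(\gamma)$, exactly as in the classical $W_2$ proof. You instead first establish the fiberwise identity $W_\nu^2(\mu_0,\mu_1)=\int W_2^2(\mu_0^y,\mu_1^y)\,d\nu(y)$ and then apply the $W_2$ triangle inequality pointwise together with Minkowski in $L^2(\nu)$. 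Your route is more conceptual and in fact anticipates the first part of Theorem~\ref{thm: charaterization of metric}, which the paper proves only afterwards; however, the equality in your fiberwise formula needs not just measurability of $y\mapsto W_2(\mu_0^y,\mu_1^y)$ but a Borel selection of optimal couplings $\gamma^y\in\Pi_{opt}(\mu_0^y,\mu_1^y)$ (as in \cite[Lemma~12.4.7]{AmbrosioGradientFlows2008}), which you should cite rather than label ``standard.'' The paper's glueing argument avoids this selection issue entirely and is more self-contained at this stage, while your approach has the advantage of immediately delivering the disintegration characterization that is used repeatedly later.
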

\begin{proof}
	It is immediate that $W_\nu(\mu_0,\mu_1) \geq 0$, with equality if and only if $\mu_0 =\mu_1$, and that $W_\nu(\mu_0,\mu_1) =W_\nu(\mu_1,\mu_0)$.  Therefore, $W_\nu$ is a semi-metric.  
	
	It remains to show that $W_\nu$ is a metric on $\mathcal{P}_\nu^u(X)$; we must only verify the triangle inequality. Let $\mu_0,\mu_1$ and $\mu_2$ belong to $\mathcal{P}_\nu^u(X)$.   Let $\gamma_1 \in \Pi(\nu,\mu_0,\mu_1)$ and $\gamma_2\in \Pi(\nu,\mu_0,\mu_2)$ be optimal couplings in \eqref{eqn: metric definition}; that is $W_\nu(\mu_0,\mu_1)=\int_{X\times X\times X}|x_0-x_1|^2d\gamma_1(y,x_0,x_1)$ and $W_\nu(\mu_0,\mu_2)=\int_{X\times X\times X}|x_0-x_2|^2d\gamma_2(y,x_0,x_2)$.  Now note that both $(\gamma_1)_{yx_0}$ and $(\gamma_2)_{yx_0}$ are both optimal transports between $\nu$ and $\mu_0$; by the uniqueness assumption, we therefore have $(\gamma_1)_{yx_0}=(\gamma_2)_{yx_0}$.  The glueing lemma (the version in \cite{Villani-TOT2003}, Lemma 7.6, is sufficiently general) then implies the existence of a measure $\gamma \in \mathcal P(X \times X \times X \times X)$ such that $\gamma_{yx_0x_1} =\gamma_1$ and $\gamma_{yx_0x_2} =\gamma_2$.  We note that $\gamma_{yx_1x_2}$ satisfies $(\gamma_{yx_1x_2})_{yx_1} =\gamma_{yx_1} =(\gamma_1)_{yx_1} \in \Pi_{opt}(\nu,\mu_1)$ and $(\gamma_{yx_1x_2})_{yx_2}=\gamma_{yx_2} =(\gamma_2)_{yx_2} \in \Pi_{opt}(\nu,\mu_2)$.  Therefore, we have
	\begin{eqnarray*}
	W_\nu(\mu_1,\mu_2)&\leq &\sqrt{\int_{X\times X\times X}|x_1-x_2|^2d\gamma_{yx_1x_2}(y,x_1,x_2)}\\
	&=&\sqrt{\int_{X\times X\times X \times X}|x_1-x_2|^2d\gamma(y,x_0,x_1,x_2)}\\
	&=&||x_1-x_2||_{L^2(\gamma)}\\
	&\leq &||x_1-x_0||_{L^2(\gamma)} +||x_2-x_0||_{L^2(\gamma)}\\
	&=&\sqrt{\int_{X\times X\times X \times X}|x_1-x_0|^2d\gamma(y,x_0,x_1,x_2)} +\sqrt{\int_{X\times X\times X \times X}|x_0-x_2|^2d\gamma(y,x_0,x_1,x_2)}\\
	&=&\sqrt{\int_{X\times X\times X}|x_1-x_0|^2d\gamma_{1}(y,x_0,x_1)}+\sqrt{\int_{X\times X\times X}|x_0-x_2|^2d\gamma_{2}(y,x_0,x_2)}\\
	&=&	W_\nu(\mu_0,\mu_1)+	W_\nu(\mu_0,\mu_2)
	\end{eqnarray*}
\end{proof}
The following example confirms that the triangle inequality can fail if we do not restrict to $ \mathcal P^u_\nu(X)$.
\begin{example}(Failure of the triangle inequality outside of $\mathcal{P}_\nu^u(X)$)
	Let $X \subseteq \mathbb{R}^2$, and take $\nu = \frac{1}{2}[\delta_{(1,0)}+\delta_{(-1,0)}]$, $\mu_0 = \frac{1}{2}[\delta_{(0,1)}+\delta_{(0,-1)}]$, $\mu_1 = \frac{1}{2}[\delta_{(\epsilon,1)}+\delta_{(-\epsilon,-1)}]$, $\mu_2 = \frac{1}{2}[\delta_{(-\epsilon,1)}+\delta_{\epsilon,-1)}]$, for some  $\epsilon >0$.  Then any measure $\pi \in \Pi(\nu,\mu_0)$ is optimal between $\nu$ and $\mu_0$.  The only optimal transport plan between $\nu$ and $\mu_1$, on the other hand, maps $( 1,0)$ to $( \epsilon,1)$ and $( -1,0)$ to $(-\epsilon,-1)$.  Similarly, the only optimal transport plan between $\nu$ and $\mu_2$, maps $( 1,0)$ to $( \epsilon,-1)$ and $( -1,0)$ to $( -\epsilon,1)$.  We therefore compute that
	
	$$
	W_\nu(\mu_0,\mu_1) =\epsilon=W_\nu(\mu_0,\mu_2)
	$$
	but
	$$
		W_\nu(\mu_1,\mu_2) =2.
	$$ 
	Therefore, $W_\nu(\mu_1,\mu_2) >	W_\nu(\mu_0,\mu_1)+W_\nu(\mu_0,\mu_2)$ for $\epsilon <1$, and so the triangle inequality fails.
\end{example}

	\begin{remark}\label{rem: metric on couplings}
		As an alternative to our definition one may equivalently define first a metric on optimal couplings and then optimize over all such couplings.  More precisely, let $\Pi_{opt}(\nu) =\cup_{\mu \in \mathcal{P}(X)}\Pi_{opt}(\nu,\mu)$ be the subset of optimal couplings in $\mathcal{P}(X \times X)$ whose first marginal is $\nu$.   It is then straightforward to show that
		\begin{equation}\label{eqn: metric on couplings}
			\tilde W_\nu(\pi_0,\pi_1)=\sqrt{\inf_{\gamma \in \mathcal P(X \times X \times X)|\gamma_{yx_i}=\pi_i, i=0,1}\int_{X\times X\times X}|x_0-x_1|^2d\gamma(y,x_0,x_1)}
		\end{equation}

		is a metric on $\Pi_{opt}(\nu)$, which we will refer to as the \emph{$\nu$-based Wasserstein metric on couplings}.  Moreover, the $\nu$-based Wasserstein metric then has the following characterization:
		$$
		W_\nu(\mu_0,\mu_1)=\inf_{ \pi_i \in \Pi_{opt}(\nu,\mu_i), i=0,1}	\tilde W_\nu(\pi_0,\pi_1).
		$$ 
		
		In addition, we note that the linearized optimal transport framework defined in \cite{WangSlepcevBasuOzolekRohde13} is essentially metric \eqref{eqn: metric on couplings} between couplings.
	\end{remark}

 
 We pause now to describe the $\nu$-based Wasserstein metric for  several simple examples of choices for $\nu$.
\begin{example}\label{ex: ac reference measure}
	We recall the main example used to introduce generalized geodesics in Section 9.2 of \cite{AmbrosioGradientFlows2008}.  Let $\nu$ be absolutely continuous with respect to $m$-dimensional Lebesgue measure on $X$.  Then by Brenier's theorem \cite{brenier1991polar} there exist unique optimal couplings of the form $(Id,\tilde T_i)_\# \nu$ in $\Pi_{opt}(\nu,\mu_i)$, and therefore the only measure $\gamma$ with $\gamma_{yx_i} \in \Pi_{opt}(\nu,\mu_i)$ for $i=0,1$ is $\gamma=(Id,\tilde T_0, \tilde T_1)_\# \nu$.  We then have
	$$
	W_\nu(\mu_0,\mu_1) = \int_X|\tilde T_0(y)- \tilde T_1(y)|^2d\nu(y)
	$$
	so that the metric space $(\mathcal P(X),W_\nu)$ is isometric to a subset of the Hilbert space $L^2(\nu)$.  Geodesics for this metric take the form $t \mapsto \Big(tT_1 +(1-t)T_0 \Big)_\#\nu$; these are the standard generalized geodesics found in, for example, Definition 7.31 of \cite{santambook}.
\end{example}
\begin{example}
	At the other extreme, suppose $\nu =\delta_y$ is a Dirac mass.  Then for any coupling $\pi \in \Pi(\mu_0,\mu_1)$, the measure $\gamma =\delta_y \otimes \pi$ has $\gamma_{yx_i} = \delta\otimes \mu_i \in \Pi_{opt}(\nu,\mu_i)$.  Since
	$$
	\int_{X \times X \times X}|x_0-x_1|^2d\gamma(y,x_0,x_1) = \int_{X \times X}|x_0-x_1|^2d\pi(x_0,x_1) 
	$$
	we have
	$$
	W^2_\nu(\mu_0,\mu_1) = \inf_{\pi \in \Pi(\mu_0,\mu_1)}\int_{X \times X}|x_0-x_1|^2d\pi(x_0,x_1) 
	$$
	which is exactly the standard quadratic Wasserstein metric.
\end{example}
In this paper, we will be especially interested in the cases in between these extremes, when $\nu$ is singular with respect to Lebesgue measure but not a Dirac mass.  One of the simplest such cases is the following example.
\begin{example}
	Suppose that $\nu$ concentrates on a line segment and is absolutely continuous with respect to one dimensional Hausdorff measure.  It then turns out that the $\nu$-based Wasserstein metric coincides with the layerwise-Wasserstein metric introduced in \cite{KimPassSchneider19}.  The proof of this fact is slightly more involved than the previous two examples, and is included as a separate proposition below (Proposition \ref{prop: layerwise Wasserstein} -- note that the definition of the layerwise-Wasserstein metric is recalled in equation \eqref{eqn: layerwsie Wasserstein def} below as well).
\end{example}
As we show below, $W_\nu$ is also related to the following multi-marginal optimal transport problem.  Fix $\epsilon >0$ and  set

\begin{equation}\label{eqn: multi-marginal problem}
\MM_\nu^\epsilon(\mu_0,\mu_1):=\inf_{\gamma \in \Pi(\nu, \mu_0,\mu_1)} \int_{X \times X \times X} [\epsilon|x_0-x_1|^2 +|x_0-y|^2+|x_1-y|^2]d\gamma(y,x_0,x_1).
\end{equation}
The following result establishes two different characterizations of the $\nu$-based Wasserstein metric.
\begin{theorem}\label{thm: charaterization of metric}
The following holds
\begin{enumerate}
	\item$W^2_\nu(\mu_1,\mu_0)=\inf_{\pi_i \in \Pi_{opt}(\mu_i,\nu),i=0,1}\int_XW_2^2(\mu_0^y,\mu_1^y)d\nu(y)$, where $\mu_i^y$ is the conditional probability given $y$ of the optimal coupling $\pi_i=\nu(y)\otimes\mu_i^y(x) \in \Pi_{opt}(\nu,\mu_i)$ between $\nu$ and $\mu_i$.
	\item 
	Furthermore, any weak limit point $\bar \gamma$ as $\epsilon \rightarrow 0$ of minimizers $\gamma_\epsilon$ of the multi-marginal problem \eqref{eqn: multi-marginal problem} is an optimal coupling between $\mu_0$ and $\mu_1$ for the problem \eqref{eqn: metric definition} defining $W_\nu$.
\end{enumerate}
\end{theorem}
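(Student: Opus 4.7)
My plan is to establish the two characterizations separately, starting with part (1), since part (2) will benefit from some of the same disintegration and lower-semicontinuity bookkeeping.

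For part (1), the strategy is a direct disintegration argument with two inequalities. For the inequality $W_\nu^2(\mu_0,\mu_1)\ge \inf_{\pi_i}\int W_2^2(\mu_0^y,\mu_1^y)\,d\nu(y)$, I would take a minimizer $\gamma\in\Gamma$ in \eqref{eqn: metric definition} and disintegrate it against its first marginal $\nu$, writing $\gamma=\nu(dy)\otimes\gamma^y(dx_0,dx_1)$. Then $\gamma^y$ is a coupling of $\mu_0^y$ and $\mu_1^y$, where $\mu_i^y$ are the conditionals of the (optimal) projections $\gamma_{yx_i}=\pi_i$. Fubini gives
\begin{equation*}
\int_{X^3}|x_0-x_1|^2\,d\gamma=\int_X\!\!\int_{X^2}|x_0-x_1|^2\,d\gamma^y\,d\nu(y)\ \ge\ \int_X W_2^2(\mu_0^y,\mu_1^y)\,d\nu(y).
\end{equation*}
For the reverse inequality, I would fix any $\pi_i\in\Pi_{opt}(\nu,\mu_i)$ and use a measurable selection theorem (e.g. von Neumann--Aumann, applied to the multifunction $y\mapsto \Pi_{opt}(\mu_0^y,\mu_1^y)$) to produce a $\nu$-measurable family $y\mapsto \gamma^y$ of Wasserstein-optimal couplings, and set $\gamma=\nu(dy)\otimes \gamma^y$. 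Then $\gamma\in\Gamma$ and its cost equals $\int W_2^2(\mu_0^y,\mu_1^y)\,d\nu(y)$. The measurable selection is the most technical point, but it is standard once one notes that $y\mapsto\mu_i^y$ is $\nu$-measurable and $(\mu,\mu')\mapsto\Pi_{opt}(\mu,\mu')$ has a closed graph in the weak topology.

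For part (2), the plan is a $\Gamma$-convergence-flavored selection argument exploiting boundedness of $X$. Let $\gamma_\epsilon$ be a minimizer of $\MM_\nu^\epsilon$ and $\bar\gamma$ a weak limit point; I may pass to a subsequence since $\Pi(\nu,\mu_0,\mu_1)$ is weakly compact. Set $m_i:=\min_{\pi\in\Pi(\nu,\mu_i)}\int|x_i-y|^2\,d\pi$. For any $\gamma\in\Gamma$, optimality of $\gamma_\epsilon$ gives
\begin{equation*}
\epsilon\!\!\int|x_0-x_1|^2 d\gamma_\epsilon+\sum_{i=0,1}\!\!\int|x_i-y|^2 d(\gamma_\epsilon)_{yx_i}\le \epsilon\!\!\int|x_0-x_1|^2 d\gamma+m_0+m_1,
\end{equation*}
since $\gamma\in\Gamma$ forces $\int|x_i-y|^2 d\gamma_{yx_i}=m_i$. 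As $(\gamma_\epsilon)_{yx_i}\in\Pi(\nu,\mu_i)$ also satisfies $\int|x_i-y|^2 d(\gamma_\epsilon)_{yx_i}\ge m_i$, two consequences follow. First, dropping the nonnegative transport terms on the left yields $\int|x_0-x_1|^2 d\gamma_\epsilon\le\int|x_0-x_1|^2 d\gamma$, uniformly in $\epsilon$. Second, dropping the nonnegative $\epsilon|x_0-x_1|^2$ term on the left and using boundedness of $X$ gives
\begin{equation*}
\sum_{i=0,1}\int|x_i-y|^2 d(\gamma_\epsilon)_{yx_i}\le m_0+m_1+O(\epsilon).
\end{equation*}

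Passing to the limit along the subsequence, lower semicontinuity of $\pi\mapsto\int|x_i-y|^2 d\pi$ under weak convergence combined with $\int|x_i-y|^2 d\bar\gamma_{yx_i}\ge m_i$ forces equality for each $i$, so $\bar\gamma_{yx_i}\in\Pi_{opt}(\nu,\mu_i)$ and hence $\bar\gamma\in\Gamma$. Applying lower semicontinuity once more to the cost $|x_0-x_1|^2$ in the first consequence above, we get $\int|x_0-x_1|^2 d\bar\gamma\le\int|x_0-x_1|^2 d\gamma$ for every $\gamma\in\Gamma$, so $\bar\gamma$ is optimal in \eqref{eqn: metric definition}. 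The main subtlety I anticipate is not in either limit per se but rather in making sure the two bookkeeping inequalities are kept separate: the transport terms must be controlled \emph{before} passing to the limit (to obtain $\bar\gamma\in\Gamma$), since only then does the inequality on $\int|x_0-x_1|^2$ acquire meaning.
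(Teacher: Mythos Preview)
Your proposal is correct and follows essentially the same approach as the paper: for part (1) the paper likewise disintegrates $\gamma$ against $\nu$ and invokes a measurable-selection result (specifically Lemma~12.4.7 in \cite{AmbrosioGradientFlows2008}) to assemble a Borel family of fiberwise optimal couplings; for part (2) the paper runs the same comparison between $\gamma_\epsilon$ and an arbitrary $\gamma\in\Gamma$, extracts the two consequences you identify, and passes to the limit. One small wording issue: in your first consequence, ``dropping the nonnegative transport terms on the left'' should be read as ``replacing them by their lower bound $m_0+m_1$ and cancelling''---literally dropping them would leave an extra $ (m_0+m_1)/\epsilon$ on the right---but your surrounding sentences make the intended (correct) argument clear.
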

\begin{proof}
	The first part is almost immediate: for fixed $\pi_i \in \Pi_{opt}(\mu_i,\nu)$, if $\gamma \in \mathcal P(X^3)$  
	is disintegrated with respect to $\nu$, $\gamma(y,x_0,x_1) =\gamma^y(x_0,x_1)\otimes \nu(y)$, then $\gamma_{x_iy} =  \pi_i$ is equivalent to the conditional probability $\gamma^y$ on $X \times X$ having the conditional probabilities $\mu_i^y$ of the $\pi_i(x_i,y) =\mu_i^y(x_i)\otimes \nu(y)$ as marginals for almost every fixed $y$. We can therefore rewrite the integral in \eqref{eqn: metric definition} as $\int_Y\int_{X\times X}|x_0-x_1|^2d\gamma^y(x_0,x_1)d\nu(y)$.   Lemma 12.4.7 in \cite{AmbrosioGradientFlows2008} then implies that we can choose a Borel family $\gamma^y \in \mathcal{P}(X \times X)$, where for $\nu$ almost every $y$, $\gamma^y$ is an optimal coupling between the $\mu_i^y$.  The choice $\gamma(y,x_0,x_1) =\gamma^y(x_0,x_1)\otimes \nu(y)$ then minimizes the integral in \eqref{eqn: metric definition}, which yields the desired formula.


	Turning to the second point, let $\gamma$ be any competitor in the definition of $W_\nu$; that is, assume $\gamma_{yx_i} \in \Pi_{opt}(\nu,\mu_i)$ for $i=0,1$. 
	Since this clearly implies $\gamma \in \Pi(\nu,\mu_0,\mu_1)$, optimality of $\gamma_\epsilon$ in the multi-marginal problem yields
\begin{equation}
\label{eqn: optimal gamma epsilon}
\begin{split}
	&\int_{X \times X \times X} [\epsilon|x_0-x_1|^2 +|x_0-y|^2+|x_1-y|^2]d\gamma_\epsilon(y,x_0,x_1) \\
	&\leq \int_{X \times X \times X} [\epsilon|x_0-x_1|^2 +|x_0-y|^2+|x_1-y|^2]d\gamma(y,x_0,x_1).
\end{split}
\end{equation}

Taking the limit as $\epsilon \rightarrow 0$ gives
	$$
		\int_{X \times X \times X} [ |x_0-y|^2+|x_1-y|^2]d\bar \gamma(y,x_0,x_1) \leq \int_{X \times X \times X} [ |x_0-y|^2+|x_1-y|^2]d\gamma(y,x_0,x_1),
	$$
	or
\begin{eqnarray*}
	\int_{X \times X }  |x_0-y|^2d\bar\gamma_{yx_0}(y,x_0) +\int_{X \times X } |x_1-y|^2d\bar \gamma_{yx_1}(y,x_1)\\
	\leq\int_{X \times X }  |x_0-y|^2]d\gamma_{yx_0}(y,x_0) +\int_{X \times X } |x_1-y|^2d \gamma_{yx_1}(y,x_1)
\end{eqnarray*}
		which immediately implies the optimality of the two-fold marginals of $\bar\gamma$ in \eqref{eqn: optimal matching}, $\bar \gamma_{yx_i} \in\Pi_{opt}(\nu,\mu_i)$ for $=0,1$.
	

	Furthermore, the optimality of the two-fold marginals of $\gamma$, $ \gamma_{yx_i} \in\Pi_{opt}(\nu,\mu_i)$ means that $\int_{X \times X\times X } |x_i-y|^2d\gamma(y,x_0,x_1) \leq \int_{X \times X\times X } |x_i-y|^2d\gamma_\epsilon(y,x_0,x_1)$; combined with  \eqref{eqn: optimal gamma epsilon}, this implies that we must have, for all $\epsilon$,
	$$
		\int_{X \times X \times X} |x_0-x_1|^2 d\gamma_\epsilon(y,x_0,x_1) \leq \int_{X \times X \times X} |x_0-x_1|^2 d\gamma(y,x_0,x_1).
	$$
	Passing to the limit gives
		$$
	\int_{X \times X \times X} |x_0-x_1|^2 d\bar \gamma(y,x_0,x_1) \leq \int_{X \times X \times X} |x_0-x_1|^2 d\gamma(y,x_0,x_1).
	$$

	Since this holds for every  $\gamma$ with $ \gamma_{yx_i} \in\Pi_{opt}(\nu,\mu_i)$, it implies the desired result.

\end{proof}

	Adapting the techniques in the proof of the second part of the preceding theorem, we can also establish the following $\Gamma$-convergence result relating $MM_\nu^\epsilon$ and $W_\nu$.
\begin{proposition}
	The functional on $\mathcal{P}(X) \times \mathcal{P}(X)$ defined by
	$$
(\mu_0,\mu_1)\mapsto	F^{\epsilon}_\nu(\mu_0,\mu_1):= \frac{1}{\epsilon}\MM_{\nu}^{\epsilon}(\mu_0,\mu_1) - \frac{1}{\epsilon}W_2^2(\nu,\mu_0) -\frac{1}{\epsilon}W_2^2(\nu,\mu_0)
	$$
	$\Gamma$-converges to $(\mu_0,\mu_1)\mapsto W_\nu^2(\mu_0,\mu_1)$ as $\epsilon \rightarrow 0$ with respect to the product on $\mathcal{P}(X) \times \mathcal{P}(X)$ of the weak topology on $\mathcal{P}(X)$ with itself
\end{proposition}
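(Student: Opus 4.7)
The plan is to verify the two standard ingredients of $\Gamma$-convergence with respect to the weak topology: a $\liminf$ inequality along every weakly convergent pair of sequences, and the existence of a recovery sequence. I would proceed under the reading that the second Wasserstein term in the statement of $F^\epsilon_\nu$ should involve $\mu_1$ rather than $\mu_0$.

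For the recovery sequence -- the easier direction -- I would simply take the constant sequence $(\mu_0^\epsilon,\mu_1^\epsilon)=(\mu_0,\mu_1)$. Let $\gamma^*\in\Pi(\nu,\mu_0,\mu_1)$ be a minimizer in \eqref{eqn: metric definition} for $W_\nu^2(\mu_0,\mu_1)$, whose existence is guaranteed by the standard compactness argument recalled right after Definition \ref{def: nu based definition}. Since $(\gamma^*)_{yx_i}\in\Pi_{opt}(\nu,\mu_i)$ for $i=0,1$, $\gamma^*$ is admissible in the multi-marginal problem \eqref{eqn: multi-marginal problem} and evaluating its objective at $\gamma^*$ yields $\MM_\nu^\epsilon(\mu_0,\mu_1)\leq \epsilon W_\nu^2(\mu_0,\mu_1)+W_2^2(\nu,\mu_0)+W_2^2(\nu,\mu_1)$. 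Dividing by $\epsilon$ and rearranging gives $F^\epsilon_\nu(\mu_0,\mu_1)\leq W_\nu^2(\mu_0,\mu_1)$ uniformly in $\epsilon$, which in particular yields the required $\limsup$ bound.

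For the $\liminf$ inequality, let $(\mu_0^\epsilon,\mu_1^\epsilon)\weakto(\mu_0,\mu_1)$; without loss of generality assume the $\liminf$ is finite and pass to a subsequence realizing it, letting $\gamma_\epsilon$ be optimal for $\MM_\nu^\epsilon(\mu_0^\epsilon,\mu_1^\epsilon)$. The key identity I would use is
\begin{equation*}
F^\epsilon_\nu(\mu_0^\epsilon,\mu_1^\epsilon)=\int|x_0-x_1|^2\,d\gamma_\epsilon+\frac{1}{\epsilon}\sum_{i=0}^{1}\Bigl(\int|x_i-y|^2\,d\gamma_\epsilon-W_2^2(\nu,\mu_i^\epsilon)\Bigr),
\end{equation*}
in which each summand in the bracketed sum is nonnegative because $(\gamma_\epsilon)_{yx_i}\in\Pi(\nu,\mu_i^\epsilon)$. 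Boundedness of $F^\epsilon_\nu$ therefore forces both Wasserstein deficits to be $O(\epsilon)$, in particular they tend to $0$. Since $X$ is bounded, $\{\gamma_\epsilon\}$ is tight and I may pass to a further subsequence converging weakly to some $\bar\gamma\in\Pi(\nu,\mu_0,\mu_1)$. Using that $|x_i-y|^2$ is bounded and continuous, $\int|x_i-y|^2\,d\gamma_\epsilon\to\int|x_i-y|^2\,d\bar\gamma$; combined with the continuity of $W_2^2(\nu,\cdot)$ under weak convergence on a bounded domain, the $O(\epsilon)$ deficit passes to the limit to give $\int|x_i-y|^2\,d\bar\gamma=W_2^2(\nu,\mu_i)$, i.e., $\bar\gamma_{yx_i}\in\Pi_{opt}(\nu,\mu_i)$. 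Thus $\bar\gamma$ is admissible in \eqref{eqn: metric definition}, and weak convergence of the bounded continuous integrand gives $\liminf F^\epsilon_\nu(\mu_0^\epsilon,\mu_1^\epsilon)\geq\int|x_0-x_1|^2\,d\bar\gamma\geq W_\nu^2(\mu_0,\mu_1)$.

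The only real technical point -- and the expected main obstacle -- is verifying that the limit plan $\bar\gamma$ retains optimal two-fold marginals when the sources $\mu_i^\epsilon$ themselves vary with $\epsilon$. This is precisely the mechanism exploited in the proof of the second assertion of Theorem \ref{thm: charaterization of metric}, and it reduces to the continuity of $W_2^2(\nu,\cdot)$ and of the integral of the bounded continuous cost $|x_i-y|^2$ under weak convergence on the bounded domain $X$; no additional ingredient beyond what has already been used is required.
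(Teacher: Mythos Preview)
Your proof is correct and follows the same overall architecture as the paper's argument: constant recovery sequence for the $\limsup$ bound, and for the $\liminf$ bound, drop the nonnegative deficit terms to get $F^\epsilon_\nu(\mu_0^\epsilon,\mu_1^\epsilon)\geq\int|x_0-x_1|^2\,d\gamma_\epsilon$, then show that any weak limit $\bar\gamma$ has optimal two-fold marginals and is therefore admissible in \eqref{eqn: metric definition}.

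The one genuine difference is the mechanism by which you show $\bar\gamma_{yx_i}\in\Pi_{opt}(\nu,\mu_i)$. The paper invokes stability of optimal plans (Theorem~5.20 in \cite{Villani-OptimalTransport-09}, together with the multi-marginal $c$-cyclical monotonicity result of \cite{Griessler18}) to conclude that $\bar\gamma$ solves the $\epsilon=0$ multi-marginal problem with cost $|x_0-y|^2+|x_1-y|^2$, which immediately forces each $\bar\gamma_{yx_i}$ to be Wasserstein-optimal. You instead exploit the identity displayed in your second paragraph: finiteness of the $\liminf$ forces each Wasserstein deficit $\int|x_i-y|^2\,d\gamma_\epsilon-W_2^2(\nu,\mu_i^\epsilon)$ to be $O(\epsilon)$, and you then pass to the limit term by term using boundedness of $X$ (so that $|x_i-y|^2$ is a bounded continuous test function and $W_2$ metrizes weak convergence). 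Your route is more elementary and self-contained, avoiding the stability machinery entirely; the paper's route does not need the preliminary reduction to a subsequence with finite $\liminf$, since the stability theorem applies regardless. Either way the argument goes through.
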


\begin{proof}
	Fix $\mu_0$ and $\mu_1$ in $\mathcal{P}(X)$; to establish $\Gamma$-convergence, we must:
	\begin{enumerate}
		\item Show that whenever $\mu_0^\epsilon,\mu_1^\epsilon$ converge weakly to $\mu_0,\mu_1$ as $\epsilon \rightarrow 0$ we have 
		$$
		W_\nu^2(\mu_0,\mu_1) \leq \liminf_{\epsilon \rightarrow 0}F^{\epsilon}_\nu(\mu_0^\epsilon,\mu_1^\epsilon)  
		$$
		\item Show that there exists  $\mu_0^\epsilon,\mu_1^\epsilon$ converging weakly to $\mu_0,\mu_1$ as $\epsilon \rightarrow 0$ such that
		$$
		W_\nu^2(\mu_0,\mu_1) \geq \limsup_{\epsilon \rightarrow 0}F^{\epsilon}_\nu(\mu_0^\epsilon,\mu_1^\epsilon)  
		$$
		
		\end{enumerate}
	
	We tackle $1$ first.  Let $\mu_0^\epsilon,\mu_1^\epsilon$ converge weakly to $\mu_0,\mu_1$.  Note that for all $\epsilon$, letting $\gamma_\epsilon$ be optimal in \eqref{eqn: multi-marginal problem} with marginals $\mu_0^\epsilon$ and $\mu_1^\epsilon$, we have $\int_{X\times X\times X}|x_i-y|^2d\gamma_\epsilon(x_0,x_1,y) \geq W_2^2(\nu,\mu_i^\epsilon)$, and so
\begin{equation}\label{eqn: lower bound for epsilon}
		F^{\epsilon}_\nu(\mu_0^\epsilon,\mu_1^\epsilon) \geq \int_{X \times X \times X} |x_0-x_1|^2d\gamma_\epsilon(x_0,x_1,y).
\end{equation}
Now, let $\gamma_0$ be a weak$^*$ limit point of the $\gamma_\epsilon$.  By Theorem 5.20 in \cite{Villani-OptimalTransport-09}\footnote{Although Theorem 5.20 in \cite{Villani-OptimalTransport-09} is stated only for two marginal problems, the same result may be proven for multi-marginal problems in exactly the same way, using the equivalence between $c$-cyclical monotonicity and optimality established in \cite{Griessler18}.}, $\gamma_0$ solves \eqref{eqn: multi-marginal problem} with $\epsilon =0$; that is, multi-marginal optimal transport with cost $c_0(x_0,x_1,y) = |x_0-y|^2 +|x_1-y|^2$ and marginals $\mu_0,\mu_1$ and $\nu$.  This means that $\gamma_{x_iy}$ must be optimal for optimal transport with the quadratic cost between $\mu_i$ and $\nu$, for $i=0,1$, respectively.  Therefore, $\gamma_0$ is a competitor in the problem \eqref{eqn: metric definition} used to define $W_\nu$ and so
$$
\int_{X \times X \times X} |x_0-x_1|^2d\gamma_0(x_0,x_1,y) \geq W^2_\nu(\mu_0,\mu_1).
$$
Passing to the limit in \eqref{eqn: lower bound for epsilon}, we get
$$
		 \liminf_{\epsilon \rightarrow 0}F^{\epsilon}_\nu(\mu_0^\epsilon,\mu_1^\epsilon) \geq \int_{X \times X \times X} |x_0-x_1|^2d\gamma_0(x_0,x_1,y) \geq  W^2_\nu(\mu_0,\mu_1)
$$
along each subsequence for which $\gamma_\epsilon$ converges.  This establishes property 1.
Property 2 is simpler; we may take $\mu_i^\epsilon =\mu_i$ for $i=0,1$.	 To see this, note that choosing $\gamma$ in \eqref{eqn: multi-marginal problem} with optimal two-fold marginals, $\gamma_{x_iy} \in \Pi_{opt}(\mu_i,\nu)$ so that $\int_{X \times X \times X}|x_i-y|^2d\gamma(x_0,x_1,y) =W_2^2(\mu_0,\mu_1)$, causes the last  terms in $F^{\epsilon}_\nu$ to cancel, we get
	$$
	F^{\epsilon}_\nu(\mu_0,\mu_1) \leq \int_{X \times X \times X}|x_0-x_1|^2d\gamma(x_0,x_1,y).
	$$
	As this holds for each $\gamma$ with $\gamma_{x_iy} \in \Pi_{opt}(\mu_i,\nu)$, we can minimize over all such $\gamma$ to obtain
	
	$$
F^{\epsilon}_\nu(\mu_0,\mu_1)  \leq W^2_\nu(\mu_0,\mu_1).
$$
	
	Therefore,
	$$
	\limsup_{\epsilon \rightarrow 0}	F^{\epsilon}_\nu(\mu_0,\mu_1) \leq W^2_\nu(\mu_0,\mu_1)
	$$
	as desired.
\end{proof}
 
\begin{remark}\label{rem: lower dimensional convention}
In this paper, we will pay special attention to the case where $\nu$ is concentrated on a lower dimensional submanifold, parametrized by $f: Y \rightarrow X$, where $Y\subset \mathbb{R}^n$, with $n < m$, and $f$ is a smooth injection.  In this case, by a slight abuse of notation, we will often consider $\nu$ to be a probability measure on $Y$, and the quadratic optimal transport problem in the definition \eqref{eqn: optimal matching} is equivalent to optimal transport between $X$ and $Y$ with cost function $c(x,y) =|x-f(y)|^2$, or, equivalently, $c(x,y) =-x\cdot f(y)$.
\end{remark}

We next consider the case when $\nu$ concentrates on a  line; we show below that in this case, $W_\nu$ corresponds to the layerwise-Wasserstein metric from \cite{KimPassSchneider19}, whose definition is recalled below.

\begin{definition}
The layerwise-Wasserstein metric between measures $\mu_0, \mu_1 \in \mathcal P(X)$ is given by
\begin{equation}\label{eqn: layerwsie Wasserstein def}
d^2_{LW}(\mu_0,\mu_1) :=W_2^2(\mu_0^V,\mu_1^V)+ \int_0^1W_2^2(\tilde \mu_0^l,\tilde \mu_1^l)dl
\end{equation}
where the $\mu_i^V = \Big(x =(x^1,x^2,...,x^m) \mapsto x^1\Big)_\#\mu_i$  are the vertical marginals  of the $\mu_i$,  $\tilde \mu_i$ are rescaled versions of the $\mu_i$, defined by $\tilde \mu_i =(F_{\mu_i}, Id)_\#\mu_i$ where $F_{\mu_i}$ is the cumulative distribution function of $\mu_i^V$,  with the identity mapping $Id$  applied on the last $m-1$ coordinates (so that $(F_{\mu_i}, Id)(x^1,x^2,...,x^m) =(F_{\mu_i}(x^1),x^2,...,x^m)$) and $\tilde \mu_i=\mu_i^l\otimes dl$ is disintegrated with respect to its uniform vertical marginal (see \cite{KimPassSchneider19} for a more detailed description).
\end{definition}

\begin{proposition}\label{prop: layerwise Wasserstein}
Suppose $\nu$ is concentrated on the line segment $\{(t,0,0,...0): t \in \mathbb{R}\}$ and is absolutely continuous with respect to $1$-dimensional Hausdorff measure.  Then $W_\nu$ is equal to the layerwise Wasserstein metric.
\end{proposition}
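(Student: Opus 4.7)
My plan is to invoke the first characterization in Theorem \ref{thm: charaterization of metric}, namely $W_\nu^2(\mu_0,\mu_1) = \inf_{\pi_i \in \Pi_{opt}(\mu_i,\nu)} \int_X W_2^2(\mu_0^y,\mu_1^y)\, d\nu(y)$, and show that in the line-segment case, the right-hand side reduces to the two terms $W_2^2(\mu_0^V,\mu_1^V) + \int_0^1 W_2^2(\tilde\mu_0^l,\tilde\mu_1^l)\, dl$ appearing in $d_{LW}^2$.

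First, following Remark \ref{rem: lower dimensional convention}, I would parametrize $\nu$ by $y \in \mathbb{R}$ via $f(y) = (y,0,\ldots,0)$, so the cost is $c(x,y) = (x^1-y)^2 + \sum_{j=2}^m (x^j)^2$. The last sum depends only on $\mu_i$, so the OT problem from $\mu_i$ to $\nu$ is equivalent to 1D quadratic OT between the vertical marginal $\mu_i^V$ and $\nu$. Since $\nu$ is absolutely continuous with respect to $\mathcal{H}^1$, the monotone rearrangement $T_i^V := F_\nu^{-1} \circ F_{\mu_i^V}$ (well-defined under the standing assumption, implicit in the definition of $d_{LW}$, that $\mu_i^V$ is non-atomic) is the unique optimal map from $\mu_i^V$ to $\nu$. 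Hence $\pi_i = (\mathrm{Id}, T_i^V \circ \mathrm{proj}^1)_\# \mu_i$ is the unique element of $\Pi_{opt}(\mu_i,\nu)$, and disintegration with respect to $\nu$ gives $\mu_i^y$ concentrated on the horizontal slice $\{x^1 = (T_i^V)^{-1}(y)\}$, with horizontal marginal equal to the conditional of $\mu_i$ at vertical level $(T_i^V)^{-1}(y) = F_{\mu_i^V}^{-1}(F_\nu(y))$.

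Next, I would compute $W_2^2(\mu_0^y,\mu_1^y)$ explicitly. Since these measures sit on parallel hyperplanes with first coordinates $(T_0^V)^{-1}(y)$ and $(T_1^V)^{-1}(y)$ respectively, for any coupling between them the $x^1$-contribution to the transport cost is the constant $\bigl((T_0^V)^{-1}(y) - (T_1^V)^{-1}(y)\bigr)^2$; what remains is a $(m-1)$-dimensional quadratic OT problem between the horizontal marginals $\mu_i^{y,\mathrm{h}}$. Thus
\[
W_2^2(\mu_0^y,\mu_1^y) = \bigl((T_0^V)^{-1}(y) - (T_1^V)^{-1}(y)\bigr)^2 + W_2^2(\mu_0^{y,\mathrm{h}},\mu_1^{y,\mathrm{h}}).
\]

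Finally, I would integrate against $\nu$ and handle the two pieces separately. For the first term, since $y \mapsto (T_i^V)^{-1}(y)$ is non-decreasing, the coupling $\bigl((T_0^V)^{-1},(T_1^V)^{-1}\bigr)_\# \nu$ is the monotone coupling between $\mu_0^V$ and $\mu_1^V$, so by 1D optimal transport its squared cost equals $W_2^2(\mu_0^V,\mu_1^V)$. For the second term, I would apply the change of variables $l = F_\nu(y)$; under this substitution $(T_i^V)^{-1}(y) = F_{\mu_i^V}^{-1}(l)$, and unraveling the definition of $\tilde\mu_i = (F_{\mu_i^V}, \mathrm{Id})_\# \mu_i$ shows that the horizontal marginal $\mu_i^{y,\mathrm{h}}$ coincides with $\tilde\mu_i^l$. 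Hence the second term becomes $\int_0^1 W_2^2(\tilde\mu_0^l,\tilde\mu_1^l)\, dl$, and summing gives $W_\nu^2(\mu_0,\mu_1) = d_{LW}^2(\mu_0,\mu_1)$. The main subtlety I anticipate is justifying the disintegration and change-of-variable identification $\mu_i^{y,\mathrm{h}} = \tilde\mu_i^{F_\nu(y)}$ carefully enough to make the $\nu$-a.e. statement rigorous; this is where the non-atomicity of $\mu_i^V$ is used.
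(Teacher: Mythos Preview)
Your proposal is correct and follows essentially the same route as the paper's proof: both invoke the first characterization in Theorem~\ref{thm: charaterization of metric}, reduce the optimal transport from $\mu_i$ to $\nu$ to a one-dimensional monotone rearrangement of the vertical marginals, split $W_2^2(\mu_0^y,\mu_1^y)$ into a constant vertical offset plus an $(m-1)$-dimensional horizontal Wasserstein term, and then change variables via the CDF of $\nu$ to recover the two pieces of $d_{LW}^2$. Your treatment is slightly more explicit about the monotone map $T_i^V = F_\nu^{-1}\circ F_{\mu_i^V}$ and about the non-atomicity hypothesis on $\mu_i^V$ needed to make the coupling unique, which the paper leaves implicit.
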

\begin{proof}
Using the framework described in Remark \ref{rem: lower dimensional convention} with $n=1$ and $f(y) =(y,0,0,...,0)$, we consider optimal transport between $\mu_i$ on $X$ and $\nu$ on $Y\subseteq \mathbb{R}$ with cost function $|x_i-f(y)|^2=(x^1_i-y)^2 +(x^2_i)^2+...(x^m_i)^2$, or, equivalently, the cost function $c(x_i,y)=(x^1_i-y)^2$.  This is an \emph{index cost} in the terminology of \cite{McCannPass20}; that is, $c(x_i,y) =\tilde b(I(x_i),y)$ depends on $x_i \in \mathbb{R}^m$ only through the lower dimensional variable $I(x_i) =x_i^1$. For such costs, the optimal transport problem can be solved  semi-explicitly, essentially by solving the optimal transport problem between the one dimensional variables $I(x_i) =x_i^1$ and $y$.  We described the solution below.

 
  The level set $(T_i)^{-1}(y)$ of the optimal transport map $T_i:X \rightarrow Y$ consists of the hyperplane $\{(z_i(y),x_i^2,...,x_i^m)\}$ where the fixed $z_i(y)$ is chosen so that 
  \begin{equation}\label{eqn: monotone matching }
  \mu_i(\{(x_i^1,x_i^2,...x_i^m):x_i^1 \leq z_i(y)\}) = \nu(-\infty,y).
  \end{equation}

  	By the first part of Theorem \ref{thm: charaterization of metric}, the optimal arrangement $\gamma$ in \eqref{eqn: metric definition} then pairs the conditional probability $\mu_0^y$  on $\{z_0(y)\}\times \mathbb{R}^{m-1}$ 
  	with the corresponding conditional probability $\mu_1^y$ on $\{z_1(y)\}\times \mathbb{R}^{m-1}$. 
  	
  	By Theorem \ref{thm: charaterization of metric}, we then have
  	$$
  	W^2_\nu(\mu_0,\mu_1)=\int_YW_2^2(\mu_0^y, \mu_1^y)d\nu(y)
  	$$

  	Note that $|(z_0(y),x_0^2,...,x_0^m)-(z_1(y),x_1^2,...,x_1^m)|^2 = |(x_0^2,...,x_0^m)-(x_1^2,...,x_1^m)|^2 +(z_0(y)-z_1(y))^2$, so that $W_2^2(\mu_0^y, \mu_1^y) = W_2^2( \hat \mu_0^y,  \hat \mu_1^y) +(z_0(y)-z_1(y))^2$, where the measures $\hat \mu_i^y := \Big((z_i(y),x_i^2,...,x_i^m) \mapsto (x_i^2,...,x_i^m)\Big)_\#\mu_i^y$ are measures on $\mathbb{R}^{m-1}$. 	Letting $F$ be the quantile function of  $\nu$, and changing variables via $l=F(y)$, we have
  	\begin{eqnarray*}
  		W^2_\nu(\mu_0,\mu_1)&=&\int_Y [W_2^2(\hat \mu_0^y, \hat \mu_1^y) +(z_0(y)-z_1(y))^2]d\nu(y)\\
  		&=&\int_0^1 [W_2^2(\hat \mu_0^{F^{-1}(l)}, \hat \mu_1^{F^{-1}(l)}) +(z_0(F^{-1}(l))-z_1(F^{-1}(l))^2]dl
  	\end{eqnarray*}
  	Note that the second term is exactly the Wasserstein metric between the first marginals of $\mu_0$ and $\mu_1$.  Since the conditional probabilities $\tilde \mu^l_i$ are both supported on $\{l\} \times \mathbb{R}^{m-1}$, and $\Big((l,x_i^2,...,x_i^m) \mapsto (x_i^2,...,x_i^m)\Big)_\#\tilde \mu_i^l =\hat \mu_i^{F^{-1}(l)}$, we have $W_2^2(\tilde \mu^l_i, \tilde \mu^l_i) = W_2^2(\hat \mu_0^{F^{-1}(l)}, \hat \mu_1^{F^{-1}(l)}) +(l-l)^2 =W_2^2(\hat \mu_0^{F^{-1}(l)}, \hat \mu_1^{F^{-1}(l)})$.  The last line above is then exactly the definition of the layerwise-Wasserstein metric.
\end{proof}

As was noted in \cite{KimPassSchneider19}, in two dimensions the layerwise-Wasserstein metric corresponds to the Knothe-Rosenblatt rearrangement \cite{Knothe57,Rosenblatt52} and so we immediately obtain the following.
\begin{corollary}
	Let $m=2$.  Then, under the assumptions in the preceding Proposition, the optimal rearrangement $\gamma$ in \eqref{eqn: metric definition} satisfies $\gamma_{x_0x_1} =(Id,G)_\#\mu_0$, where $G$ is the Knothe-Rosenblatt rearrangement.
\end{corollary}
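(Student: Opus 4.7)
The plan is to assemble the Corollary from two already-available ingredients: the explicit fiberwise description of the optimal $\gamma$ given inside the proof of Proposition \ref{prop: layerwise Wasserstein}, and the classical characterization of optimal transport on the real line. In $m=2$, that proof shows that the minimizer in \eqref{eqn: metric definition} disintegrates as $\gamma = \gamma^y \otimes \nu(y)$ where, for $\nu$-almost every $y$, $\gamma^y$ is an optimal coupling between $\mu_0^y$, supported on the vertical line $\{z_0(y)\} \times \mathbb{R}$, and $\mu_1^y$, supported on $\{z_1(y)\} \times \mathbb{R}$. Both conditionals are thus essentially one-dimensional measures.

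Next, I would apply the well-known fact that the quadratic optimal transport between two probability measures on $\mathbb{R}$ is the monotone rearrangement. Identifying $\mu_i^y$ with its pushforward under projection onto the second coordinate, this determines $\gamma^y$ as the deterministic coupling realized by the monotone map between the conditional distributions of the second coordinate. In parallel, the defining relation \eqref{eqn: monotone matching } forces both $y \mapsto z_0(y)$ and $y \mapsto z_1(y)$ to be quantile functions of the vertical marginals $\mu_0^V$ and $\mu_1^V$ at height $\nu((-\infty,y))$, so that $z_1 \circ z_0^{-1}$ implements the monotone transport from $\mu_0^V$ to $\mu_1^V$.

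Finally, I would observe that these two monotone transports — one adjusting the first coordinate, and for each value of the first coordinate one monotonically rearranging the corresponding conditional on the second coordinate — are exactly the two ingredients that assemble the Knothe-Rosenblatt rearrangement $G$ from $\mu_0$ to $\mu_1$ \cite{Knothe57,Rosenblatt52}. Reading off the support of $\gamma^y$ and integrating against $\nu$ then yields $\gamma_{x_0 x_1} = (Id, G)_\#\mu_0$. I do not anticipate any genuine obstacle here, since the statement is essentially a repackaging of the structure already extracted in Proposition \ref{prop: layerwise Wasserstein}; the only minor point of care is Borel measurability of the fiberwise monotone maps in the second coordinate, which is handled by the disintegration argument (Lemma 12.4.7 of \cite{AmbrosioGradientFlows2008}) already invoked in the proof of Theorem \ref{thm: charaterization of metric}.
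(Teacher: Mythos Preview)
Your proposal is correct and follows essentially the same approach as the paper: both arguments use the fiberwise description from Proposition \ref{prop: layerwise Wasserstein} to see that the first coordinate is matched monotonically via the mass-balancing relation \eqref{eqn: monotone matching }, and then invoke the one-dimensional monotone rearrangement for the conditional probabilities on the vertical fibers, identifying the composite map as the Knothe--Rosenblatt rearrangement. Your version is slightly more explicit about the quantile-function interpretation and the measurability of the fiberwise maps, but the underlying argument is the same.
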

\begin{proof}
	For each $y$, the level set of the optimal map $T^{-1}(y)$ is the line segment $\{(z_i(y),x_i^2)\}$ where \eqref{eqn: monotone matching } becomes:
	
	  \begin{equation}\label{eqn: 2d monotone matching }
		\mu_i(\{(x_i^1,x_i^2):x_i^1 \leq z_i(y)\}) = \nu(-\infty,y).
	\end{equation}
Thus, any matching $\gamma$ with $\gamma_{yx_i} \in \Pi_{opt}(\nu,\mu_i)$ must couple $y$ with $\{(z_i(y),x_i^2)\}$; it follows that $\gamma_{x_0x_1}$ must couple  $\{(z_0(y),x_0^2)\}$ to $\{(z_1(y),x_1^2)\}$.  Since \eqref{eqn: 2d monotone matching } implies $\mu_0(\{(x_0^1,x_0^2):x_0^1 \leq z_0(y)\}) =\mu_1(\{(x_1^1,x_1^2):x_1^1 \leq z_1(y)\})$, this means that $\gamma_{x_0x_1}$ is montone in the first coordinate.  The optimal way to couple the conditional probabilites, concentrated on $\{(z_0(y),x_0^2)\}$ and $\{(z_1(y),x_1^2)\}$ is then monotonic; this exactly characterizes the Knothe-Rosenblatt rearrangement.
\end{proof}
\begin{remark}

One can recover a similar characterization of the Knothe-Rosenblatt rearrangement in higher dimensions by looking at iterated optimization problems along an orthogonal basis.  This corresponds to the limit of a multi-marginal problem where the interactions are weighted iteratively, as we show in Appendix \ref{sect: multiple reference measures}.  This is closely related to the main result in \cite{CarlierGalichonSantambrogio10}, where the Knothe-Rosenblatt rearrangement is characterized as a limit of optimal transport maps with anisotropic costs.
	\end{remark}
\subsection{Dual metrics}\label{sect:dual metrics}

We now define a class of  metrics on a lower dimensional space which are \textit{dual} to $W_\nu$ in a certain sense; more precisely, they use Kantorovich potentials $v(y)$ on the variable $y$ arising from optimal transport to a fixed reference measure $\mu(x)$ to compare measures on the $y$ variable, whereas $W_\nu$ uses in part the Kantorovich potential $v(y)$ for optimal transport from a reference measure $\nu(y)$ on $y$ to measures on $x$ in order to compare those free measures.

Fix reference measures $\mu \in \mathcal{P}(X)$ and  $\sigma \in \mathcal{P}(Y)$ on bounded domains $X \subseteq \mathbb{R}^m$ and $Y\subseteq \mathbb{R}^n$, with $n \leq m$, absolutely continuous with respect to $m$ and $n$ dimensional Lebesgue measure, respectively.  We let $c:\bar X\times \bar Y\rightarrow \mathbb{R}$ be a cost function which is $C^1$ up to the boundary and satisfies the \emph{twist condition}; that is, for each fixed $x \in X$, the mapping from $\bar Y$ to $\mathbb{R}^m$ defined by 
	$$
 	y \mapsto D_xc(x,y)
	$$
	is injective.  Note that $c \in C^1 (\bar X \times  \bar Y)$ implies that $c$ is Lipschitz, since $\bar X$ and $\bar Y$ are compact.

We let $\mathcal P_{ac,\sigma}(Y)$ be the set of probability measures on $Y$ which are absolutely continuous with respect to $\sigma$. For measures $\nu_0, \nu_1 \in \mathcal P_{ac,\sigma}(Y)$, we define
\begin{equation}\label{eqn: dual metric}
W^*_{\mu, \sigma,c, p}(\nu_0,\nu_1):=|| D v_0- D v_1 ||_{L^p(\sigma)}
\end{equation}
for some $p \in [1, \infty]$ where $v_i$ is the $c$-concave Kantorovich potential (that is, $c$-conjugate solutions to \eqref{eqn: ot dual} corresponding to the optimal transport problem between $\mu$ and $\nu_i$ with cost function $c$).

\begin{proposition}
$W^*_{\mu, \sigma, c, p}$ is a metric on  $\mathcal P_{ac,\sigma}(Y)$.
\end{proposition}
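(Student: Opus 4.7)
The plan is to first check that the quantity on the right-hand side of \eqref{eqn: dual metric} is well-defined as a function of $(\nu_0,\nu_1)$, and then to verify each of the three defining axioms of a metric in turn. For well-definedness, I would use that $c \in C^1(\bar X \times \bar Y)$ on the compact product is in particular Lipschitz, so any $c$-concave function on $\bar Y$ is Lipschitz with constant controlled by that of $c$; by Rademacher's theorem, each $v_i$ is then differentiable Lebesgue-almost everywhere on $Y$, and hence $\sigma$-almost everywhere (since $\sigma \ll \mathcal{L}^n$). The $c$-concave potential $v_i$ is determined by the optimal transport problem between $\mu$ and $\nu_i$ only up to an additive constant, but this ambiguity disappears when we pass to $Dv_i$, so the map $\nu_i \mapsto Dv_i \in L^\infty(\sigma) \subset L^p(\sigma)$ is well-defined.

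Symmetry is immediate from the definition, and the triangle inequality is inherited directly from the triangle inequality for the norm $\|\cdot\|_{L^p(\sigma)}$: given $\nu_0,\nu_1,\nu_2 \in \mathcal{P}_{ac,\sigma}(Y)$ with associated potentials $v_0,v_1,v_2$, one writes
\[
\|Dv_0 - Dv_2\|_{L^p(\sigma)} \leq \|Dv_0 - Dv_1\|_{L^p(\sigma)} + \|Dv_1 - Dv_2\|_{L^p(\sigma)}.
\]

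The main obstacle is the separation axiom, i.e.\ showing $W^*_{\mu,\sigma,c,p}(\nu_0,\nu_1)=0 \Rightarrow \nu_0 = \nu_1$, which is where the structure of optimal transport must be invoked. The strategy is: from $Dv_0 = Dv_1$ $\sigma$-a.e., together with the fact (implicit in the setup, since $\sigma$ is a probability density on $Y$ which, for the metric to separate points, must have support with positive Lebesgue measure in every component of $Y$) that $\sigma$-null sets within the relevant region are Lebesgue-null, deduce $Dv_0 = Dv_1$ Lebesgue-a.e.\ on $Y$. Since $v_0 - v_1$ is Lipschitz on the convex, hence connected, domain $Y$ and its gradient vanishes Lebesgue-almost everywhere, it is constant, so $v_0 = v_1 + K$ for some $K \in \mathbb{R}$.

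Finally, I would transport this rigidity from the $Y$-side back to the $X$-side to recover $\nu_0 = \nu_1$. Taking $c$-transforms, $u_0 := v_0^c$ and $u_1 := v_1^c$ satisfy $u_0 = u_1 - K$ on all of $\bar X$, so $Du_0 = Du_1$ $\mu$-a.e. The twist assumption on $c$ then uniquely determines the Monge maps $T_i:X \to Y$ from $Du_i$ via the relation $D_x c(x, T_i(x)) = Du_i(x)$, valid for $\mu$-a.e.\ $x$, so $T_0 = T_1$ $\mu$-a.e. Since $\nu_i = (T_i)_\# \mu$ by Brenier/McCann-type theorems (applicable because $\mu$ is absolutely continuous and $c$ is twisted), we conclude $\nu_0 = (T_0)_\# \mu = (T_1)_\# \mu = \nu_1$, completing the proof. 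The delicate point throughout is the passage from $\sigma$-a.e.\ equality of gradients to Lebesgue-a.e.\ equality, which is what allows the Lipschitz constancy argument; this is the step where the precise hypothesis on the support/density of $\sigma$ matters.
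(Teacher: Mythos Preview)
Your approach is essentially the same as the paper's: both arguments reduce the metric axioms to showing that $\nu \mapsto Dv$ is a well-defined injection into $L^p(\sigma)$, using the Lipschitz regularity of $c$-concave potentials (via Rademacher) for well-definedness and boundedness of $Dv$, and then the twist condition on the $c$-conjugate side to recover $T$ and hence $\nu$ from $Du$. Your write-up is in fact more explicit than the paper's on the separation step, and you correctly isolate the one delicate passage---upgrading $Dv_0 = Dv_1$ from $\sigma$-a.e.\ to enough pointwise equality to force $v_0 - v_1$ constant---which the paper's proof leaves implicit; your remark that this requires an unstated hypothesis on $\sigma$ (e.g.\ full support in $Y$) is well taken. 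One small correction: the paper only assumes $Y$ is a bounded domain, not convex, but connectedness (which is what you actually use) is already part of the definition of a domain.
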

\begin{proof}
	Since the $L^p$ norm clearly induces a metric, all that needs to be proven is that for each measure $ \nu \in \mathcal P_{ac,\sigma}(Y)$, the gradient $Dv$ of the  Kantorovich potential is in $L^p(\sigma)$, and that the mapping $\nu \mapsto Dv$ is a bijection.
	Since the cost function is Lipschitz, a now standard argument originating in \cite{McCann2001}, implies that the potential  $v$ is Lipschitz; absolute continuity of $\nu$ with respect to $\sigma$ (and therefore Lebesgue measure) ensures that it  exists 
	$\sigma$ almost everywhere, by Rademacher's theorem. Furthermore, since $v$ is semi-convex and Lipschitz, the gradient $Dv$ is also bounded, and so $Dv \in L^p(\sigma)$ for all $ p\geq 1$.
	
	 Absolute continuity of $\nu$ with respect to $\sigma$, and therefore Lebesgue measure, ensures that $Dv$ is uniquely determined by $\nu$ \cite[Proposition 1.15]{santambook}.
	 
	 On the other hand, it is well known that the twist condition ensures that the unique optimizer to the optimal transport problem between $\mu$ and $\nu$ concentrates on a graph, $y=T(x)$, where $T$ is uniquely determined by the gradient of the potential $u(x)$ via the equation $Du(x) =D_xc(x,T(x))$.  Since $\nu= T_\#\mu$, we have that $\nu$ is uniquely determined by $T$ and therefore $u$.
	 The $c$-concave Kantorovich potential  $v(y)=\inf_{x \in X}[c(x,y)-u(x)]$ is then clearly in one to one correspondence with $u(x)=\inf_{y \in Y}[c(x,y)-v(y)]$ and, therefore, it uniquely determines $\nu$.  Thus $\nu$ and $Dv$ are in one-to-one correspondence, as desired.
	
\end{proof}

To help explain the motivation behind this metric, it is useful to first consider the $m=n$ case with $c(x,y) =|x-y|^2$.  The metric $W^*_{\mu, \sigma,c, 2}$ is then similar to the $\nu$-based Wasserstein metric.  Recall from Example \ref{ex: ac reference measure} that in this case $W_\nu(\mu_0,\mu_1)$ between two measures in $\PP(X)$ coincides with the $L^2$ metric on the optimal maps from $\nu$ to the $\mu_i$; on the other hand, $W^*_{\mu, \sigma,c, 2}(\nu_0,\nu_1)$ for two measures in $\PP(Y)$ coincides with the $L^2$ metric on the optimal maps from the $\nu_i$ to $\mu$.  That is, $W^*_{\mu, \sigma,c, 2}(\nu_0,\nu_1)$ compares gradients of potential on  the opposite, rather than same, side of the problem from the reference measure, and so is in some sense dual to $W_\nu$ in  Example \ref{ex: ac reference measure}.

The case where $m>n$ is an attempt to generalize this idea in the same way that the $\nu$-based Wasserstein metric generalizes the $L^2$ metric in Example \ref{ex: ac reference measure}.  We emphasize that the motivating example  is when $c=|x-f(y)|^2$ is quadratic between $x$ and $f(y)$ in a lower dimensional submanifold, using the convention in Remark \ref{rem: lower dimensional convention}.  In this case, $W^*_{\mu, \sigma, c, p}$ is dual to the $\nu$ based Wasserstein metric, in the sense that it relies on the Kantorovich potential on the target, rather than source, side.

The metric $W^*_{\mu, \sigma, c, p}$ arises naturally in certain variational problems among measures on $Y$ involving optimal transport to the fixed measure $\mu$ on $X$, which is more naturally described by the potentials $v$ on $Y$ than $u$ on $X$.  We expand on this viewpoint in section \ref{sect: fixed point} below, where we establish a fixed point characterization of solutions to a class of these problems. In the proof, we actually use a metric which corresponds to using the mass splitting $k$, introduced in \cite{McCannPass20} and recalled in equation \eqref{eqn: mass splitting} below, rather than potentials (these are the same when the problem satisfies the nestedness condition introduced in \cite{McCannPass20}, as is the case for the optimal $\nu$ under appropriate conditions identified in \cite{NennaPass1} and recalled in \eqref{eqn: suff condition for nestedness} below, but not in general).
\section{Unequal dimensional optimal transport}
 
When the base measure $\nu$ concentrates on a lower dimensional submanifold, the definition \eqref{eqn: metric definition} of the $\nu$-Wasserstein metric relies on unequal dimensional optimal transport.  We briefly review the known theory on this topic here, and establish a lemma which we will need later on.

For $c(x,y) =-x\cdot f(y)$ as in Remark \ref{rem: lower dimensional convention}, $y \in Y$ and $p \in-P_y$, where $-P_y:=\{-x\cdot Df(y) \in X:x \in X \}$, we define, as in \cite{McCannPass20}, noting that $D_yc(x,y) =-x\cdot Df(y)$ 
\begin{equation}\label{eqn: def of X1}
X_1(y,p):=\{x\in X: -x\cdot Df(y) =p\}.
\end{equation}
Note that, since our cost is linear in $X$,  each $X_1(y,p)$ is an affine $(m-n)$-dimensional submanifold of $X$, provided that $Df(y)$ has full rank.  For a fixed $y$, each $X_1(y,p)$ is parallel.  

The following result shows that when we disintegrate the optimizer with respect to $\nu$, the conditional probabilities are absolutely continuous with respect to $(m-n)$-dimensional Hausdorff measure, and provides a formula for their densities.
\begin{theorem}\label{thm: structure of multi-to one-dimensional transport}
Consider optimal transport between probability measures $\mu$ on $X \subseteq \mathbb{R}^m$ and $\nu$ on $Y \subseteq\mathbb{R}^n$, where $m>n$, both absolutely continuous with respect to Lebesgue measure,  with densities $\bar \mu(x)$ and $\bar \nu(y)$, respectively,  and cost function $c(x,y)=-x\cdot f(y)$ where $f \in C^2(Y)$ is injective and non-degenerate, that is $Df(y)$ has full rank everywhere.

Then there is a unique optimal measure $\pi=(Id,T)_\#\mu$ in \eqref{eqn: ot with general cost}, which
 concentrates on the graph of a function $T:X \rightarrow Y$.  The Kantorovich potential $v(y)$ is  differentiable for almost every $y$, and at each such point $T^{-1}(y)$ is concentrated on the affine manifold $X_1(y,Dv(y))$. The conditional probability $\mu^y(x) $ 
corresponding to the disintegration $\mu=\mu^y\otimes \nu(y)$ of $\mu$ with respect to $T$ is absolutely continuous with respect to $(m-n)$ - dimensional Hausdorff measure on $X_1(y,Dv(y))$ for almost every $y$.  Furthermore, there is a subset $X(v^c) \subset X$ of full volume such that this conditional probability is concentrated on $T^{-1}(y) \cap X(v^c)$ with density $\bar \mu^y(x)$  with respect to $(m-n)$ -- dimensional Hausdorff measure given by 
\begin{equation}
\bar \mu^y(x) = \bar \mu(x)/[\bar \nu(y)JT^y(x)]
\end{equation}
where $JT^y(x) = \frac{\sqrt{\det [Df(y)Df^T(y)]}}{\det [-x\cdot D^2f(y)-D^2v(y)]}$ is the $n$-dimensional Jacobian of the optimal transport map defined by $T(x) =(f)^{-1}(-Dv^c(x))$, evaluated at any point $x \in T^{-1}(y)$.
\end{theorem}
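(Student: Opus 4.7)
The proof naturally decomposes into three parts: (i) establishing that the optimal plan has the Monge form $\pi=(\mathrm{Id},T)_\#\mu$; (ii) identifying the fibers $T^{-1}(y)$ with affine submanifolds $X_1(y,Dv(y))$; and (iii) computing the conditional densities via a Jacobian computation. Throughout, I would exploit that $c(x,y)=-x\cdot f(y)$ is linear in $x$, which is what separates the unequal-dimensional case with this particular cost from generic twisted costs.

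For (i), the linearity in $x$ means that the $c$-conjugate $v^c(x)=\inf_{y\in Y}[-x\cdot f(y)-v(y)]$ is an infimum of affine functions of $x$, hence concave and Lipschitz on the bounded set $X$. By Rademacher and Aleksandrov, $v^c$ is differentiable on a full-Lebesgue-measure subset $X(v^c)\subset X$ (in fact twice differentiable $\mathcal{L}^m$-a.e.). For each $x\in X(v^c)$ and any $y$ attaining the infimum in the definition of $v^c(x)$, the envelope identity forces $Dv^c(x)=-f(y)$; injectivity of $f$ then pins down $y=T(x):=f^{-1}(-Dv^c(x))$, single-valued $\mu$-a.e.\ since $\mu$ is absolutely continuous. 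This gives both existence and uniqueness of $\pi=(\mathrm{Id},T)_\#\mu$.

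For (ii), each function $y\mapsto -x\cdot f(y)-v^c(x)$ is $C^2$ with Hessian $-x\cdot D^2f(y)$ uniformly bounded as $x$ ranges over $X$ (since $X$ is bounded and $f\in C^2(\bar Y)$). Thus $v$, being the infimum of such functions, is semi-concave on $Y$, and hence twice differentiable $\mathcal{L}^n$-a.e.\ by Aleksandrov's theorem, and therefore $\nu$-a.e.\ by absolute continuity of $\nu$. At any such $y$ and any $x\in T^{-1}(y)\cap X(v^c)$, the duality identity $v(y)=-x\cdot f(y)-v^c(x)$ realises a pointwise minimum over $y$, and the first-order condition reads $Dv(y)=-x\cdot Df(y)$; this is exactly $x\in X_1(y,Dv(y))$ by \eqref{eqn: def of X1}, which is affine of dimension $m-n$ since $Df(y)$ has full rank. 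Because $\mu$ charges $X(v^c)$, the fiber measure $\mu^y$ is concentrated on $X_1(y,Dv(y))\cap X(v^c)$.

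For (iii), apply the implicit function theorem to the optimality relation $F(x,y):=Dv(y)+x\cdot Df(y)=0$. Second-order optimality at the minimum $y=T(x)$ yields $-x\cdot D^2f(y)-D^2v(y)\geq 0$, so $D_yF=D^2v(y)+x\cdot D^2f(y)$ is negative semidefinite; the crucial non-degeneracy is strict negative-definiteness on a set of full $\nu$-measure, and this is the main technical obstacle — it is where one must lean on the regularity theory of unequal-dimensional transport developed in \cite{McCannPass20}. Once granted, $DT(x)=-[D^2v(y)+x\cdot D^2f(y)]^{-1}D_xF$ and a direct Gramian calculation gives
\begin{equation*}
JT^y(x)=\sqrt{\det\bigl(DT(x)\,DT(x)^T\bigr)}=\frac{\sqrt{\det[Df(y)Df^T(y)]}}{\det[-x\cdot D^2f(y)-D^2v(y)]},
\end{equation*}
where the absolute value around the denominator is removed by the second-order optimality. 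Finally, applying the co-area formula to $T$ on $X(v^c)$ and comparing with the disintegration $\mu=\mu^y\otimes\nu(y)$ produces $\bar\mu^y(x)=\bar\mu(x)/[\bar\nu(y)\,JT^y(x)]$ with respect to $\haus^{m-n}$ on $T^{-1}(y)\cap X(v^c)=X_1(y,Dv(y))\cap X(v^c)$, which is the claimed density formula and in particular yields absolute continuity of the conditional probabilities with respect to $(m-n)$-dimensional Hausdorff measure.
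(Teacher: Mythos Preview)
Your proposal is correct and follows essentially the same three-step strategy as the paper: Monge structure via the twist/injectivity of $f$, identification of fibers with $X_1(y,Dv(y))$ via the first-order condition $Dv(y)=-x\cdot Df(y)$, and the density formula via a Jacobian/co-area computation. Two points where the paper is more self-contained are worth noting. First, the paper constructs $X(v^c)$ explicitly using the $C^1$ Lusin-type approximation of $Du$ from \cite{EvansGariepy15} (Theorem~6.13), working on an exhaustion by sets $X_i$ on which $T$ behaves like a $C^1$ map and then summing; this is what makes the co-area step rigorous, whereas you invoke Aleksandrov and the co-area formula directly, which is slightly informal since $T=f^{-1}(-Dv^c)$ is not globally Lipschitz. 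Second, the non-degeneracy you flag as ``the main technical obstacle'' is handled in the paper by a one-line linear algebra argument rather than by appeal to external regularity: differentiating $Dv(T(x))=x\cdot Df(T(x))$ in $x$ gives $[D^2v(y)-x\cdot D^2f(y)]\,DT(x)=Df(y)$, and since $Df(y)$ has full rank $n$, so must $DT(x)$, forcing $JT(x)>0$ wherever $DT$ exists. This replaces your deferral to \cite{McCannPass20} for strict definiteness.
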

\begin{proof}The existence and uniqueness of an optimal transport map $T$ is well known.  
The remainder of the proof is based on the  proof of Theorem 2 in \cite{McCannPass20}.  We repeat the preliminary setup of that argument for the reader's convenience.
 Letting $u(x)=v^c(x)$ and $v(y)=u^c(y)$ be the Kantorovich potentials, it is well known that $u$ is semi-concave and therefore its differential $Du(x)$ exists almost everywhere and is locally of bounded variation (see Theorem 6.8 in \cite{EvansGariepy15}).  Theorem 6.13 in \cite{EvansGariepy15} then implies that for each integer $i>0$ there exists a set $Z_i$ of Lebesgue measure less than $1/i$, and a continuously differentiable map $H_i$ with $DH_i =Du$ and $D^2H_i=D^2u$ outside of $Z_i$.  The $Z_i$ may be taken to be nested, that is, $Z_{i+1} \subset Z_i$, and we set $Z_\infty = \cap_{i=1}^{\infty}Z_i$ and $X(v^c) =dom(D^2u)\setminus Z_\infty$.
	
Now set $\tilde Y =dom(D^2v)$ and  $\tilde X = T^{-1}(\tilde Y)$.  We note that $u(x) +v(y)-c(x,y) \leq 0$, with equality at $(x,T(x))$.   For $x \in \tilde{X}$, differentiating with respect to $y=T(x) \in \tilde Y$ yields $Dv(y) =x\cdot Df(y)$, or $Dv(T(x)) = x\cdot Df(T(x))$.  If $x \in dom(DT)$, differentiating with respect to $x$ then yields:

$$
[D^2v(T(x)) -x^tD^2f(y)]DT(x) =Df(T(x)).
$$
As the right hand side has full rank by assumption, this means that the $m\times n$ matrix $DT(x)$ must also have full rank, and so its Jacobian $JT(x)=det^{1/2}[DT^t(x)DT(x)] >0$ must be non-zero.  
We then set $\tilde X_i =\{x \in\tilde X \setminus Z_i: |x| \leq i, JT(x) >1/i \}$ and $X_i=\tilde X_i \setminus \tilde X_{i-1}$, so that $\tilde X \setminus Z_{\infty} =\cup_{i=1}^\infty X_i$ and the union is pairwise disjoint,


Now, equation (30) in \cite{McCannPass20} implies that for any $h \in L^\infty(\mathbb{R}^m)$ we have
\begin{eqnarray}
\int_{X_i}h(x) \bar \mu_i(x) dx&=&\int_Y	\bar \nu(y)\frac{1}{\bar \nu(y)}dy\int_{T_i^{-1}(y) \cap X_i}\frac{h(x)\bar \mu_i(x)}{JT^y_i(x)}d\mathcal H^{m-n}(x)\\
&=&\int_Y	\bar \nu(y)\frac{1}{\bar \nu(y)}dy\int_{T_i^{-1}(y) \cap X(v^s)}\frac{h(x)\bar \mu_i(x)}{JT_i^y(x)}d\mathcal H^{m-n}(x)
\end{eqnarray}
One can then sum on $i$ to obtain
\begin{equation}\label{eqn: change of variables}
\int_{X}h(x) \bar \mu(x) dx=\int_Y	\bar \nu(y)dy\int_{T^{-1}(y) \cap X(v^s)}\frac{h(x)\bar \mu(x)}{\bar \nu(y)JT^y(x)}d\mathcal H^{m-n}(x).
\end{equation}
As this holds for every $h \in L^{\infty}(\mathbb{R}^m)$, it implies the desired disintegration.
\end{proof}
\subsection{Nestedness and its high dimensional generalization}
In what follows, in Section \ref{sect: fixed point} in particular,  we will sometimes consider optimal transport between $\mu$ and $\nu$ with respect to a general cost, $c(x,y)$ on $X \times Y \subseteq \mathbb{R}^m \times \mathbb{R}^n$, with $m \geq n$.  In this case, we will denote the analogue of  $X_1(y,p)$ by 
\begin{equation}
X^c_1(y,p):=\{x\in  X: D_yc(x,y) =p\}.
\end{equation}

 Assuming that $c$ is non-degenerate, so that the $m \times n$ matrix $D^2_{xy}c(x,y)$ has full rank $n$, the implicit function theorem implies that $X^c_1(y,p)$ is an $m-n$ dimensional submanifold of $X$. 
If we denote by $v(y)$ the Kantorovich potential, we have
$$
c(x,y)-v(y) \geq  u(x)=\inf_{y' \in Y}(c(x,y') -v(y'))
$$
with equality $\gamma$ almost everywhere.  For $(x,y)$ such that equality holds, this implies that $y' \mapsto c(x,y')-v(y')$ is minimized at $y'=y$.

 At any such point where $v$ is differentiable, we therefore obtain $Dv(y)=D_yc(x,y)$, so that
$$
 \partial^cv(y) :=\{x \in X: c(x,y)-v(y) =\inf_{y' \in Y}(c(x,y') -v(y'))\}\subseteq X^c_1(y,Dv(y)).
$$

The generalized nestedness condition, introduced in \cite{McCannPass20} is that this inclusion is actually an equality.

\begin{definition}\label{def: generalized nestedness}
	We say that $(c,\mu,\nu)$ satisfies the \emph{generalized nestedness condition} if for all $y$ at which $v$ is differentiable, we have
	$$
	\partial^cv(y) =X^c_1(y,Dv(y))
	$$
	where $v$ is the $c$-conjugate Kantorovich potential for transport between $u$ and $v$.
\end{definition}
The importance of this condition,  uncovered in \cite{McCannPass20}, is as follows.  Solving the optimal transport problem \eqref{eqn: ot with general cost} is essentially equivalent to solving the dual problem \eqref{eqn: ot dual}.  In turn, it is shown in \cite{McCannPass20} that the solution $(v^c,v)$ to \eqref{eqn: ot dual} can be recovered by solving a partial differential equation for $v$, which is in general \emph{non-local} and therefore extremely challenging.  Under the generalized nestedness condition, however, it turns out that this equation reduces to a local equation, and therefore becomes much more tractable.

The origin of the term generalized nestedness comes from the case when $Y=(\underline y, \overline y)$ is one dimensional, in which case it can be shown that generalized nestedness is  essentially equivalent to nestedness of the corresponding superlevel sets.  More precisely, we define:
\begin{equation}
	X^c_\geq(y,p):=\{x\in X: D_yc(x,y) \geq p\}.
\end{equation} 
Assuming that $\mu$ and $\nu$ are both absolutely continuous, with densities bounded above and below, for each fixed $y$, we note that the measure $\mu(X^c_\geq(y,k))$ of the super level set 	$X^c_\geq(y,k)$ is a decreasing function of $k$.  There will then be a unique $k =k(y)$ satisfying the   population balancing condition:
\begin{equation}\label{eqn: mass splitting}
\mu(X^c_\geq(y,k)) = \nu((\underline y,y)).
\end{equation}
The definition of nestedness found in \cite{PassM2one} is then the following.
\begin{definition}\label{def: nestedness}
	Assume that $Y$ is one dimensional.  We say the model $(c,\mu,\nu)$ is \textit{nested} if $$
	X^c_{\geq}(y,k(y))\subseteq X^c_{\geq }(y',k(y'))
	$$
	whenever $y \leq y'$.
\end{definition}
The relevance of the nestedness condition is that it allows one to almost explicitly solve \eqref{eqn: ot with general cost} \cite{PassM2one}. 
One can  attempt to define a mapping $T: X \rightarrow Y$ that maps each $x \in X_1^c(y,k(y))$ to $y$.  In general, this may not result in a well defined mapping, since if  $ X_1^c(y,k(y))$ intersects $ X_1^c(y',k(y'))$ for $y \neq y'$, the procedure attempts to map each $x$ in the intersection to \emph{both} $y$ and $y'$.  It turns out that the nestedness condition ensures that this does not happen; furthermore, not only is the mapping $T$ described above well defined for nested models, but the measure $(Id,T)_\#\mu$ is the solution to the Kantorovich problem \eqref{eqn: ot with general cost} and the $k(y)$ defined above coincides with the derivative $v'(y)$ of the Kantorovich potential.

Before closing this section, we illustrate the nestedness condition with the following simple example from \cite{PassM2one}.  Several other examples may be found in \cite{PassM2one} \cite{ChiapporiMcCannPass15p}\cite{ChiapporiMcCannPass19}.
\begin{example}	
Let $c(x,y) =-x\cdot (\cos(y),\sin(y)) =-x_1\cos(y) -x_2\sin(y)$, $\mu$ be uniform on the subset $X=\{x: 0<r <1, 0<\theta<\bar \theta  \}\subset \mathbb{R}^2$, of the unit ball in $\mathbb{R}^2$, expressed in polar coordinates, and $\nu$ be uniform on $Y=(0,\bar \theta)$.  This corresponds to to optimal transport with the quadratic cost between $X$ and the corresponding circular arc, parametrized by the angle $y$.  The solution clearly maps each $x$ radially outward to $x/|x|$, so the optimal map is $y=T(x) =\arctan(x_2/x_1)$. We will illustrate the nestedness concept by considering two cases; in the first case, the model is nested, and so the procedure described above successfully constructs the optimal mapping.  In the second, the model is not nested, and the procedure fails to produce a well defined mapping.

\textbf{Case 1:} Suppose that $\bar \theta <\pi$.  Then  $	X^c_1(y,k)=\{X \in X: x\cdot(\sin(y),-\cos(y)) =k\}$ is a line segment parallel to $(\cos(y),\sin(y))$ and  $	X^c_\geq(y,k)$ is the portion of $X$ lying below this line segment.  The balancing condition \eqref{eqn: mass splitting} is satisfied for $k=k(y) =0$, in which case  $	X^c_1(y,k)$ passes through the origin and the mass of the wedge $	X^c_\geq(y,k) =\{x: 0<r <1, 0<\theta<y  \}$ satisfies
$$
\mu(X^c_\geq(y,k)) =y/\bar \theta =  \nu((0,y)).
$$
These regions $X^c_\geq(y,k(y)) =X^c_\geq(y,0)$ are nested, 	$X^c_\geq(y,0) =\{x: 0<r <1, 0<\theta<y  \} \subset \{x: 0<r <1, 0<\theta<y'  \}= X^c_\geq(y',0) $ for $y<y'$, and indeed, mapping  each $x \in X^c_1(y,0)$ to $y$ coincides with optimal map, while setting $v'(y) =k(y)$ yields the Kantorovich potential.

\textbf{Case 2:} Suppose now that $\pi<\bar \theta <2\pi$.  Then for $y < \bar \theta -\pi$, note that the line segment  $	X^c_1(y,0)$ through the origin now intersects $X$ in the quadrant $\{x_1,x_2<0\}$ and so $	X^c_\geq(y,0) =\{x: 0<r <1, 0<\theta<y  \} \cup \{x: 0<r <1, \pi +y<\theta<\bar \theta  \}$; we therefore have $\mu(X^c_\geq(y,0)) =y/\bar \theta +\bar \theta -\pi -y >y/\bar \theta =\nu((0,y))$.  The $k=k(y)$ satisfying \eqref{eqn: mass splitting} must therefore be larger than $0$.  Assigning each $x$ in $X^c_=(y,k(y))$ to $y$ then clearly does not correspond to the optimal map, and so the result in \cite{PassM2one} implies that the model cannot be nested.  Alternatively, this can be seen directly by noting that $X^c_\geq(y,k(y))$ contains some point in the quadrant $\{x_1,x_2<0\}$ for $y < \bar \theta -\pi$, whereas for  $  \bar \theta -\pi<y'<\pi$, $k(y')=0$ as before, so $X^c_\geq(y',k(y')) =\{x: 0<r <1, 0<\theta<y  \}$ is contained in the region where $x_2>0$. 
\end{example}

\subsection{Sufficient conditions for nestedness}
In our previous work \cite{NennaPass1}, motivated by problems in economics and game theory, we considered the problem of minimizing among probability measures in $\mathcal P([\underline y, \overline y])$ a class of functionals including:
\begin{equation}\label{eqn: variational problem}
\nu \mapsto T_c(\mu,\nu) + \int \bar \nu(y) \log(\bar \nu(y))dy,
\end{equation}
for a fixed measure $\mu$ on the bounded domain $X \subseteq \mathbb{R}^m$, absolutely continuous with respect to Lebesgue measure, and a fixed domain $Y:=(\underline y, \overline y) \subset \mathbb{R}$.     Here $T_c(\mu,\nu):=\inf_{\pi \in \Pi(\mu,\nu)}\int_{X \times Y}c(x,y)d\pi(x,y)$ is the cost of optimal transport between $\mu$ and $\nu$ with respect to the cost function $c(x,y)$ and $\bar \nu$ is the density of $d\nu(y) =\bar \nu (y)dy$ with respect to Lebesgue measure (if $\nu$ is not absolutely continuous, the functional is taken to be infinite).

 In \cite{NennaPass1}, we showed that the solution to \eqref{eqn: variational problem} is nested provided that:

\begin{equation}\label{eqn: suff condition for nestedness}
\sup_{y_1 \in \Y,y_0\leq y\leq y_1} 	\frac{D^{min}_\mu(y_0,y_1,k(y_0))}{y_1-y_0}-M_c\frac{e^{-M_cy}}{e^{M_c\overline{y}}-e^{M_c\underline{y}}} -1<0
\end{equation}
for all $y_0 \in \Y$ 
Here, $k(y_0)$ is defined by \eqref{eqn: mass splitting},

$$
D^{min}_\mu(y_0,y_1,k_0) =\mu(X_\geq(y_1,k_{max}(y_0,y_1,k_0))\setminus X^c_\geq(y_0,k_0))
$$
is the minimal mass difference where $k_{max}(y_0,y_1,k_0)=\sup\{k: X_\geq(y_0,k_0) \subseteq X_\geq (y_1,k)\}$ and $M_c :=\sup_{(x,y_0,y_1) \in (\bar \X \times \bar \Y \times \bar Y)}\frac{|c(x,y_0) -c(x,y_1)|}{|(x,y_0) -(x,y_1)|}$ is the Lipschitz norm with respect to variations in $y$ (note that the condition has been reformulated slightly from \cite{NennaPass1}, to match the choice of functional in \eqref{eqn: variational problem}; in \cite{NennaPass1} more general functionals were studied).

 \section{Uniqueness of optimal couplings and structure of geodesics when $\nu$ concentrates on a lower dimensional submanifold}

We note that geodesics for the metric $W_\nu$ are generalized geodesics with base $\nu$, according to the general definition in \cite{AmbrosioGradientFlows2008}.  When $\nu$ is sufficiently regular (for instance, absolutely continuous with respect to Lebesgue measure, as in Example \ref{ex: ac reference measure}), generalized geodesics are \emph{uniquely} determined by $\mu_0$ and $\mu_1$. In this case, $\Pi_{opt}(\nu, \mu_i)$ consists of a single measure concentrated on the graph of a function $\tilde T_i:X\mapsto X$ pushing $\nu$ forward to $\mu_i$; $\gamma=(Id,\tilde T_0,\tilde T_1)_\#\nu$ is then the unique measure satisfying $\gamma_{yx_i} \in \Pi_{opt}(\nu,\mu_i)$, and therefore the unique coupling minimizing the integral in the definition of $W_{\nu}(\mu_0,\mu_1)$.  If $\nu$ is singular, this reasoning does not apply; existence of minimizers in \eqref{eqn: metric definition} follows immediately from standard continuity-compactness results, but they may be non-unique in general.  

Here we specialize to the case when $\nu$ concentrates on some lower dimensional submanifold.  Given Proposition \ref{prop: layerwise Wasserstein}, we can regard this as a sort of non-linear generalization of the layerwise-Wasserstein metric.  In this setting, if the $\mu_i$ are absolutely continuous with respect to Lebesgue, we obtain a uniqueness result as a consequence of Theorem \ref{thm: charaterization of metric} and Lemma \ref{lem: structure of geodesics}.  We also characterize geodesics with respect to our metric is this setting.
\begin{theorem}\label{thm: graphical solutions}
	Assume that both $\mu_i$ are absolutely continuous with respect to Lebesgue measure and that $\nu$ is supported on a smooth $n$-dimensional submanifold of $X$, and is absolutely continuous with respect to $n$-dimensional Hausdorff measure. Then the optimal coupling $\gamma$ in \eqref{eqn: metric definition} is unique and its two-fold marginal $\gamma_{x_0x_1}$ is induced by a mapping $G:X \rightarrow X$ pushing $\mu_0$ forward to $\mu_1$.  Letting $T_i$ denote the optimal maps from $\mu_i$ to $\nu$, the restriction of $G$ to  $T^{-1}_i(y)$ is the optimal mapping pushing $\mu_0^y$ forward to $\mu_1^y$ for a.e. $y$, where, as in  Theorem \ref{thm: charaterization of metric}, $\mu_i(x) =\mu_i^y(x) \otimes \nu(y)$.  This mapping takes the form $\nabla \phi^y :  X_1(y,Dv_0(y)) \rightarrow X_1(y,Dv_1(y)) $, where $\phi^y: X_1(y,Dv_0(y)) \rightarrow \mathbb{R}$ is a convex function defined on the $(m-n)$-- dimensional affine subset $X_1(y,Dv_0(y))$.
\end{theorem}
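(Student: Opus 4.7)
The plan is to combine the integral characterization of $W_\nu$ from Theorem \ref{thm: charaterization of metric}(1) with the fiber-by-fiber structure of optimal transport from $\mu_i$ to $\nu$ given by Theorem \ref{thm: structure of multi-to one-dimensional transport}, reducing the problem to an application of Brenier's theorem on each $(m-n)$-dimensional affine fiber.

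First, since each $\mu_i$ is absolutely continuous and the cost $c(x,y)=-x\cdot f(y)$ is non-degenerate, Theorem \ref{thm: structure of multi-to one-dimensional transport} supplies a unique optimal plan $\pi_i=(Id,T_i)_\#\mu_i \in \Pi_{opt}(\mu_i,\nu)$. Any competitor $\gamma$ in \eqref{eqn: metric definition} is then forced to satisfy $\gamma_{yx_i}=\pi_i$, and Theorem \ref{thm: charaterization of metric}(1) will reduce the minimization defining $W_\nu^2(\mu_0,\mu_1)$ to solving, for $\nu$-a.e. $y$, the quadratic optimal transport problem between the conditionals $\mu_0^y$ and $\mu_1^y$. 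Moreover, by Theorem \ref{thm: structure of multi-to one-dimensional transport}, each $\mu_i^y$ is absolutely continuous with respect to $(m-n)$-dimensional Hausdorff measure on the affine subspace $X_1(y,Dv_i(y))$; for fixed $y$, these two affine subspaces are parallel translates of the linear subspace $V_y:=\{x \in \mathbb{R}^m : x\cdot Df(y)=0\}$, of dimension $m-n$ since $Df(y)$ has full rank.

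Next, decomposing $x_i=q_i(y)+w_i$ with $q_i(y) \in V_y^\perp$ and $w_i \in V_y$, the Pythagorean identity
\begin{equation*}
|x_0-x_1|^2 = |q_0(y)-q_1(y)|^2 + |w_0-w_1|^2,
\end{equation*}
together with the fact that the first term depends only on $y$, will reduce the inner optimization, after translating the fibers into $V_y$, to the standard quadratic optimal transport problem between absolutely continuous pushforwards $\hat\mu_0^y,\hat\mu_1^y$ on $V_y \cong \mathbb{R}^{m-n}$. Brenier's theorem on $V_y$ then gives a unique optimal coupling, induced by a map of the form $\nabla \phi^y$ for some convex $\phi^y$; translating back yields the unique optimal coupling of $\mu_0^y$ and $\mu_1^y$ of the announced form. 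Invoking the Borel-measurable selection of Lemma 12.4.7 in \cite{AmbrosioGradientFlows2008} (already used in the proof of Theorem \ref{thm: charaterization of metric}), I would obtain a $\nu$-measurable family $y \mapsto \phi^y$ and define $G(x):=\nabla \phi^{T_0(x)}(x)$ for $\mu_0$-a.e. $x$; the disintegration $\gamma=\gamma^y\otimes \nu(y)$ then yields $\gamma_{x_0x_1}=(Id,G)_\#\mu_0$ together with uniqueness of $\gamma$.

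The main obstacle will be rigorously justifying that Brenier's theorem applies fiber by fiber: this requires both the absolute continuity of $\mu_0^y$ on $X_1(y,Dv_0(y))$ with respect to $(m-n)$-dimensional Hausdorff measure (which Theorem \ref{thm: structure of multi-to one-dimensional transport} provides) and the cost decomposition above, which reduces transport between two parallel affine subspaces to a standard Brenier problem on $V_y$. Ensuring the measurable dependence of $\phi^y$ on $y$ needed to assemble the global map $G$ is the remaining subtle point, but should be handled by the same selection result already in use in the paper.
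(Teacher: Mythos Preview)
Your proposal is correct and follows essentially the same route as the paper's proof: reduce via Theorem \ref{thm: charaterization of metric}(1) to fiber-wise quadratic transport between the conditionals $\mu_0^y,\mu_1^y$, use Theorem \ref{thm: structure of multi-to one-dimensional transport} for absolute continuity on the parallel affine slices $X_1(y,Dv_i(y))$, decompose the cost via the Pythagorean identity on $V_y\oplus V_y^\perp$, and apply Brenier on $\mathbb{R}^{m-n}$. Your treatment is slightly more explicit about the measurable selection (Lemma 12.4.7 in \cite{AmbrosioGradientFlows2008}) and the assembly of the global map $G(x)=\nabla\phi^{T_0(x)}(x)$, points the paper handles more tersely.
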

\begin{proof}
	Brenier's theorem  implies that the optimal $\pi_i =(Id,T_i)_\sharp\mu_i$ between $\mu_i$ and $\nu$
		is unique and concentrated on the graph of a function $T_i$ for $i=0,1$ \cite{brenier1991polar}; the conditional probabilities $\mu_i^y$ are concentrated on $T_i^{-1}(y)$ which are clearly pairwise disjoint.  Using the first characterization in Theorem \ref{thm: charaterization of metric}, it is enough to show that for almost every $y$, there is a unique optimal coupling between the $\mu^i(y)$ and that coupling is concentrated on the graph of a function over $\mu^0(y)$.  Furthermore, \cite{McCannPass20} implies that for almost every $y$ the conditional probabilities $\mu_i^y$ are concentrated on $m-n$ dimensional affine subspaces $X_1(y,Dv_i(y))=\{x:  x\cdot D f(y) =Dv_i(y)\}$ where $f:Y \rightarrow X$ locally parameterizes the submanifold supporting $\nu$, as in Remark \ref{rem: lower dimensional convention} and $v_i$ is the Kantorovich potential and, by  Theorem \ref{thm: structure of multi-to one-dimensional transport}, each $\mu_i^y$ is absolutely continuous with respect to $m-n$ dimensional Hausdorff measure. 
	The subspaces  $X_1(y,Dv_0(y))$ and  $X_1(y,Dv_1(y))$ containing $T^{-1}_0(y)$ and $T^{-1}_1(y)$ respectively, are  parallel.   We decompose each $x = x^N +x^{\perp}$, where $x^N$ is in the $m-n$ dimensional null space $null(Df(y))$ of $Df(y)$ and $x^{\perp}$ in its $n$-dimensional orthogonal complement, $null(Df(y))^{\perp}$, on which the linear map $Df(y)$ is injective.  Then for $x_i \in X_1(y,Dv_i(y))$, $x_i^{\perp}$ must be the unique element of  $null(Df(y))^{\perp}$ such that $(x_i^{\perp})^T Df(y) =Dv(y_i)$; that is, $x_i^{\perp}$ is \emph{constant} on $X_1(y,Dv_i(y))$.
		
		The quadratic cost between  $X_1(y,Dv_0(y))$ and  $X_1(y,Dv_1(y))$ then takes the form $|x_0-x_1|^2 = |x_0^N-x_1^N|^2 +C$, where $C=|x_0^{\perp}-x_1^{\perp}|^2$ is constant throughout $X_1(y,Dv_0(y)) \times X_1(y,Dv_1(y))$ and therefore doesn't affect optimal transport.  The optimal transport problem between  $\mu_0^y$ and  $\mu_1^y$ is then equivalent to optimal transport with respect to the quadratic cost on  $null(Df(y))$, or equivalently,  $\mathbb{R}^{m-n}$.  The existence, uniqueness and structure (as the gradient of a convex potential) of the optimal mapping between them then follows from another application of Brenier's theorem.  Thus, for almost every $y$, the subset where the optimal map fails to be uniquely defined has $m-n$ Lebesgue measure $0$, yielding the desired result.
\end{proof}
We now turn to the study of minimizing $W_\nu$ geodesics. 
 \begin{proposition}\label{prop: geodesics}
		Suppose that $\pi_0, \pi_1 \in \Pi_{opt}(\nu)$.  Then $\pi_t = ((1-t)e_0 +te_1, e_2)_\#\gamma$ is a minimizing geodesic for the $\nu$-based Wasserstein metric on couplings defined by \eqref{eqn: metric on couplings}, for each optimal $\gamma$  in \eqref{eqn: metric on couplings}, where $e_0(x_0,x_1,y) =x_0$, $e_1(x_0,x_1,y) =x_1$, and $e_2(x_0,x_1,y) =y$,
		
	Suppose that $\mu_0,\mu_1 \in \mathcal{P}_\nu^u$.  Then the generalized geodesic  $\mu_t=((1-t)e_0 +te_1)_\#\gamma$ is a minimizing geodesic for the $\nu$ based Wasserstein metric for each optimal $\gamma$ in \eqref{eqn: metric definition}, provided that $\mu_t \in \mathcal{P}_\nu^u$ for all $t \in [0,1]$.
	
\end{proposition}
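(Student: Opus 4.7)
The statement is the minimizing geodesic property, which reduces to proving the equality $\tilde W_\nu(\pi_s, \pi_t) = |s-t|\,\tilde W_\nu(\pi_0, \pi_1)$ (resp.\ the analogous identity for $W_\nu$ and the $\mu_t$) for every pair $s,t \in [0,1]$. I plan to prove each inequality separately: the upper bound $\leq$ via an explicit competitor obtained by pushing forward $\gamma$ under a suitable affine map, and the reverse bound via the triangle inequality, as is classical in geodesic metric spaces.

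For the upper bound, given the optimal $\gamma \in \mathcal{P}(X \times X \times X)$, I would consider the map
\begin{equation*}
\Phi_{s,t}(x_0, x_1, y) := \bigl((1-s) x_0 + s x_1,\ (1-t) x_0 + t x_1,\ y\bigr),
\end{equation*}
and set $\tilde\gamma_{s,t} := (\Phi_{s,t})_\# \gamma$. The appropriate two-fold projections of $\tilde\gamma_{s,t}$ yield $\pi_s$ and $\pi_t$ in Part~1, and couplings of $\nu$ with $\mu_s$ and $\mu_t$ in Part~2. A direct calculation exploiting linearity in the first two slots gives
\begin{equation*}
\int |z_s - z_t|^2 \, d\tilde\gamma_{s,t} = \int \bigl|(1-s)x_0 + s x_1 - (1-t)x_0 - t x_1\bigr|^2 \, d\gamma = (s-t)^2 \int |x_0 - x_1|^2 \, d\gamma,
\end{equation*}
which equals $(s-t)^2\,\tilde W_\nu(\pi_0, \pi_1)^2$ in Part~1 (resp.\ $(s-t)^2\,W_\nu(\mu_0,\mu_1)^2$ in Part~2) by optimality of $\gamma$.

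The subtle point, and what I expect to be the main obstacle, is verifying that $\tilde\gamma_{s,t}$ is an admissible competitor, i.e.\ that its two-fold marginals involving $y$ lie in $\Pi_{opt}(\nu, \cdot)$. Equivalently, displacement interpolation must preserve optimality when $\pi_0$ and $\pi_1$ share the common marginal $\nu$. I plan to verify this via cyclic monotonicity. Given any finite collection of points $(y_i, z_i)_{i=1}^N$ in the support of the interpolated coupling, with liftings $(x_0^i, x_1^i, y_i) \in \supp(\gamma)$ such that $z_i = (1-t)x_0^i + t x_1^i$, the quadratic-cost cyclic monotonicity inequality
\begin{equation*}
\sum_i y_i \cdot z_i \ \geq\ \sum_i y_i \cdot z_{\sigma(i)}, \qquad \sigma \in S_N,
\end{equation*}
follows by taking the convex combination with weights $1-t$ and $t$ of the corresponding inequalities for $\pi_0$ and $\pi_1$, each of which is available because $\pi_i \in \Pi_{opt}(\nu)$. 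For Part~2, the hypothesis $\mu_t \in \mathcal{P}_\nu^u$ ensures that the constructed coupling is the unique element of $\Pi_{opt}(\nu, \mu_t)$, so $\tilde\gamma_{s,t}$ is admissible.

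Once the upper bound is established for all intermediate pairs, the lower bound follows by the standard collapsing-triangle argument. Assuming $0 \leq s \leq t \leq 1$, the triangle inequality (Remark~\ref{rem: metric on couplings} for $\tilde W_\nu$ on $\Pi_{opt}(\nu)$, and Lemma~\ref{lem: metric for unique ot} for $W_\nu$ on $\mathcal{P}_\nu^u$) yields
\begin{equation*}
\tilde W_\nu(\pi_0, \pi_1) \ \leq\ \tilde W_\nu(\pi_0, \pi_s) + \tilde W_\nu(\pi_s, \pi_t) + \tilde W_\nu(\pi_t, \pi_1) \ \leq\ \bigl(s + (t-s) + (1-t)\bigr)\,\tilde W_\nu(\pi_0, \pi_1),
\end{equation*}
forcing equality throughout and giving $\tilde W_\nu(\pi_s, \pi_t) = (t-s)\,\tilde W_\nu(\pi_0, \pi_1)$. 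The identical chain handles $W_\nu$ and the $\mu_t$ in Part~2, completing the proof.
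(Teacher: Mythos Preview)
Your proposal is correct and follows essentially the same route as the paper: build explicit competitors to obtain the upper bounds $\tilde W_\nu(\pi_s,\pi_t)\le |s-t|\,\tilde W_\nu(\pi_0,\pi_1)$, then collapse the chain of triangle inequalities to force equality. The only notable differences are cosmetic: you supply an explicit cyclic-monotonicity argument for $\pi_t\in\Pi_{opt}(\nu)$ where the paper simply cites Lemma~9.2.1 of \cite{AmbrosioGradientFlows2008}, and note that the hypothesis $\mu_t\in\mathcal P_\nu^u$ is needed not for admissibility of $\tilde\gamma_{s,t}$ (your cyclic-monotonicity argument already gives that) but to make the triangle inequality for $W_\nu$ available in the final step.
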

\begin{proof}
	We tackle the first statement first.  As in the proof of Lemma 9.2.1 in \cite{AmbrosioGradientFlows2008},  $\pi_t \in \Pi_{opt}(\nu)$.   It is clear that $\gamma_t =(e_0,(1-t)e_0 +te_1, e_2)_\#\gamma$ has two-fold marginals $\gamma_{x_0y} =\pi_0$ and $\gamma_{x_ty} =\pi_t$.  Therefore,
	
\begin{eqnarray*}
\tilde  W^2_\nu(\pi_0,\pi_t) &\leq& \int_{X \times X \times X}|x_0-x_t|^2d\gamma_t(x_0,x_t,y)\\
&=&\int_{X \times X \times X}|x_0-[(1-t)x_0 +tx_1]|^2d\gamma(x_0,x_1,y)\\
&=&\int_{X \times X \times X}t^2|x_0 -x_1|^2d\gamma(x_0,x_1,y) = t^2  \tilde W^2_\nu(\pi_0,\pi_1)
\end{eqnarray*}
So $ \tilde W_\nu(\pi_0,\pi_t) \leq t \tilde W_\nu(\pi_0,\pi_1) $.  A very similar argument implies  $ \tilde W_\nu(\pi_t,\pi_s) \leq (s-t) \tilde W_\nu(\pi_0,\pi_1) $ for $s>t$, and  $ \tilde W_\nu(\pi_s,\pi_1) \leq (1-s) \tilde W_\nu(\pi_0,\pi_1) $.  Adding these three inequalities yields
\begin{eqnarray*}
 \tilde W_\nu(\pi_0,\pi_t)+  \tilde W_\nu(\pi_t,\pi_s) +W_\nu(\pi_s,\pi_1)&\leq& t  \tilde W_\nu(\pi_0,\pi_1) + (s-t) \tilde W_\nu(\pi_0,\pi_1)+(1-s) \tilde W_\nu(\pi_0,\pi_1)\\
 &=&  \tilde W_\nu(\pi_0,\pi_1)
\end{eqnarray*}
Since two applications of the triangle inequality imply the opposite inequality, $ \tilde W_\nu(\pi_0,\pi_1) \leq  \tilde W_\nu(\pi_0,\pi_t)+  \tilde W_\nu(\pi_t,\pi_s) +W_\nu(\pi_s,\pi_1)$, we must have equality throughout; in particular 
   $ \tilde W_\nu(\pi_t,\pi_s) = (s-t) \tilde W_\nu(\pi_0,\pi_1) $, and so $\pi_t$ is a minimizing geodesic for the $\nu$-based Wasserstein metric on couplings.

The proof of the second assertion is very similar and is omitted.  The only subtely is that in general it is not clear to us that $\mu_t$ remains in the set $\mathcal{P}_\nu^u$ on which $W_\nu$ is a metric, so that property is added as an assumption.
\end{proof}
Under much stronger conditions on the marginals $\mu_0$ and $\mu_1$ and the reference measure $\nu$, we are able to use Theorem \ref{thm: graphical solutions} to show that the geodesic for the $W_\nu$ metric between $\mu_0$ and $\mu_1$ is unique.

\begin{lemma}[Structure of geodesics]\label{lem: structure of geodesics}
	Let $\mu_0$, $\mu_1$ and $\nu$ be as in Theorem \ref{thm: structure of multi-to one-dimensional transport}.   Then the geodesic $\mu_t$ between $\mu_0$ and $\mu_1$ is uniquely defined.  The Kantorovich potential for optimal transport between $\mu_t$ and $\nu$ is $v_t =tv_1 +(1-t)v_0$, where $v_i:Y \rightarrow \mathbb{R}$ is the Kantorovich potential between $\mu_i$ and $\nu$, for $i=0,1$.  For almost every $y$, the conditional probability $\mu_t^y$, concentrated on $X_1(y,Dv_t(y))$, is the unique displacement interpolant between $\mu^y_0$ and $\mu_1^y$.  It is absolutely continuous with respect to $(m-n)$-dimensional Hausdorff measure; denoting its density by $\bar \mu_t^y$, and the density of $\mu_t$ with respect to $\mathcal{L}^m$ by $\bar \mu_t$, we have, $\bar \mu_t^y(x_t) =\bar \mu_t(x_t)/(JT^y_t(x_t)\bar \nu(y))$ where $JT^y_t(x_t):=\frac{\sqrt{\det(Df^TDf(y))}}{\det[-D^2v_t(y)-x_t \cdot D^2f(y)]}$ is the Jacobian of the optimal map $T_t$ from $\mu_t$ to $\nu$, evaluated at $x_t \in T_t^{-1}(y) \subseteq X_1(y,Dv_t(y))$.

		The Brenier map between $\mu_0^y$ and $\mu_t^y$ is then given by  $\nabla \phi^y_t=(1-t)I +t\nabla \phi^y$, where $\nabla \phi^y: T_0^{-1}(y) \rightarrow T_1^{-1}(y)$ is the Brenier map between $\mu_0^y$ and $\mu_1^y$.
	
\end{lemma}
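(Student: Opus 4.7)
The plan is to assemble the claim from three earlier ingredients --- Theorem~\ref{thm: graphical solutions} (uniqueness and fiberwise graph structure of the optimal $\gamma$), Proposition~\ref{prop: geodesics} (the only candidate $W_\nu$-geodesic is $\mu_t=((1-t)e_0+te_1)_\#\gamma$), and Theorem~\ref{thm: structure of multi-to one-dimensional transport} (the density formula for optimal transport to a lower-dimensional $\nu$) --- plus one new identification: the affine combination $v_t=(1-t)v_0+tv_1$ is itself a Kantorovich potential for transport between $\mu_t$ and $\nu$. This last step will exploit the linearity of $c(x,y)=-x\cdot f(y)$ in $x$.

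First I would disintegrate the unique optimal $\gamma$ of Theorem~\ref{thm: graphical solutions} as $\gamma^y\otimes\nu(y)$. By Theorem~\ref{thm: structure of multi-to one-dimensional transport} each conditional $\mu_i^y$ lives on the affine fiber $X_1(y,Dv_i(y))$ and is absolutely continuous with respect to $(m-n)$-dimensional Hausdorff measure; these two fibers are parallel, sharing the null direction $\ker Df(y)$. Identifying each with $\mathbb{R}^{m-n}$, the classical Brenier theorem produces the convex potential $\phi^y$ such that $\gamma^y=(\mathrm{Id},\nabla\phi^y)_\#\mu_0^y$, and the McCann displacement interpolant $\mu_t^y:=((1-t)\mathrm{Id}+t\nabla\phi^y)_\#\mu_0^y$ is absolutely continuous with respect to $(m-n)$-dimensional Hausdorff measure. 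Since $-x_i\cdot Df(y)=Dv_i(y)$ on the respective supports, convex combination yields $-x_t\cdot Df(y)=Dv_t(y)$ for $x_t=(1-t)x_0+tx_1$, so $\mu_t^y$ is concentrated on $X_1(y,Dv_t(y))$.

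Next I would identify $v_t$ as the Kantorovich potential. Optimality of $v_i$ on the support of the $(\mu_i,\nu)$-plan gives
\begin{equation*}
-x_i\cdot f(y')-v_i(y')\;\geq\;-x_i\cdot f(y)-v_i(y)\qquad\text{for all } y'\in Y,\ i=0,1.
\end{equation*}
Linearity of the cost in $x$ lets the $(1-t),t$ convex combination pass through to the same inequality with $x_t$ and $v_t$ in place of $x_i$ and $v_i$. Hence $(v_t^c,v_t)$ is a $c$-conjugate dual pair attaining equality along the coupling obtained from $\gamma$ by pairing $y$ with $x_t=(1-t)x_0+tx_1$; by Kantorovich duality this coupling is optimal between $\nu$ and $\mu_t$, and $v_t$ is a Kantorovich potential. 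Theorem~\ref{thm: structure of multi-to one-dimensional transport} applied to $(\mu_t,\nu)$ then delivers the claimed density formula for $\bar\mu_t^y$ with the stated Jacobian $JT_t^y$. The Brenier formula $\nabla\phi_t^y=(1-t)\mathrm{Id}+t\nabla\phi^y$ between $\mu_0^y$ and $\mu_t^y$ is exactly the fiberwise displacement interpolant built in the previous step, while uniqueness of $\mu_t$ reduces to uniqueness of $\gamma$.

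The main obstacle is the second step, identifying the affine interpolation $v_t$ of Kantorovich potentials as itself a Kantorovich potential; the argument is clean here only because the cost is linear in $x$, so the first-order dual inequalities survive convex combinations, and for a general nonlinear cost a different strategy would be required. A secondary technical point is ensuring $\mu_t$ is absolutely continuous with respect to Lebesgue so that Theorem~\ref{thm: structure of multi-to one-dimensional transport} can be invoked on $(\mu_t,\nu)$; this should follow from the fiberwise absolute continuity of $\mu_t^y$ together with that of $\nu$ along its submanifold, via an elementary change of variables.
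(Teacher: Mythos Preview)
Your argument is essentially the paper's, and your explicit verification that $v_t=(1-t)v_0+tv_1$ is a Kantorovich potential (via convex combination of the dual inequalities, exploiting linearity of $c$ in $x$) is in fact more detailed than the paper's treatment, which simply asserts this step by appeal to Theorem~\ref{thm: structure of multi-to one-dimensional transport}.

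There is one small gap in your uniqueness step. Proposition~\ref{prop: geodesics} does not assert that every $W_\nu$-geodesic has the form $((1-t)e_0+te_1)_\#\gamma$ for an optimal $\gamma$; it only asserts that curves of this form \emph{are} geodesics (assuming each $\mu_t\in\mathcal{P}_\nu^u(X)$). So uniqueness of $\gamma$ from Theorem~\ref{thm: graphical solutions} does not by itself rule out $W_\nu$-geodesics not of this form. The paper closes this differently: using part~1 of Theorem~\ref{thm: charaterization of metric}, it writes $W_\nu^2(\mu_s,\mu_t)=\int_Y W_2^2(\mu_s^y,\mu_t^y)\,d\nu(y)$ for the constructed curve, verifies the constant-speed identity $W_\nu(\mu_s,\mu_t)=|s-t|\,W_\nu(\mu_0,\mu_1)$ directly from the fact that each $\mu_t^y$ is a Wasserstein geodesic between $\mu_0^y$ and $\mu_1^y$, and then argues that any other curve $\tilde\mu_t\in\mathcal{P}_\nu^u(X)$ satisfying this identity would force its conditionals $\tilde\mu_t^y$ to be Wasserstein geodesics between $\mu_0^y$ and $\mu_1^y$ for $\nu$-a.e.\ $y$, contradicting the uniqueness of displacement interpolants between absolutely continuous measures on the parallel affine fibers. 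Your argument is easily patched the same way once you have established that $v_t$ is the potential and hence that the optimal plan between $\mu_t$ and $\nu$ is unique.
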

\begin{proof}
	Constructing $v_t$ and each $\mu_t^y$ as in the statement of the Lemma, it is clear from Theorem \ref{thm: structure of multi-to one-dimensional transport} that $v_t$ is the Kantorovich potential for optimal transport between $\mu_t(x) = \int_Y\mu^y_t(x)d\nu(y)$ and $\nu$, and that $\pi_t=\nu(y)\otimes\mu_t^y(x)$ is the unique optimal transport plan between $\mu_t$ and $\nu$. The first part of  Theorem \ref{thm: charaterization of metric} then implies that, for any $s,t$
\begin{equation}\label{eqn: w nu geodesic}
W^2_\nu(\mu_s,\mu_t) =\int_XW_2^2(\mu_t^y, \mu_s^y)d\nu(y) = (t-s)^2\int_XW_2^2(\mu_0^y, \mu_1^y)d\nu(y)=(t-s)^2W^2_\nu(\mu_0,\mu_1)
\end{equation}
since each $\mu_t^y$ is a displacement interpolant (that is, a Wasserstein geodesic) between $\mu_0^y$ and $\mu_1^y$. This implies that $\mu_t$ is a $W_\nu$ geodesic.

Furthermore, the uniqueness of the displacement interpolants $\mu_t^y$ together implies that  any other curve $\tilde \mu_t \in \mathcal P^u_\nu(X)$, disintegrated with respect to $\nu$ as $\tilde \mu_t^y(x)\otimes \nu(y)$, cannot satisfy \eqref{eqn: w nu geodesic} for all $s,t$ (as otherwise $\tilde \mu_t^y$ would be a Wasserstein geodesic between $\mu^y_0$ and $\mu^y_1$ for $\nu$ all $y$, which would violate uniqueness). This yields the desired uniqueness of the $W_\nu$ geodesic.

The absolute continuity of $\mu^y_t$ with respect to $(m-n)$-dimensional Hausdorff measure on the affine set $X_1(y,Dv_t(y)$ follows from standard arguments, since it is the displacement interpolant of two absolutely continuous measures on parallel affine sets.   Absolute continuity of $\mu_t$ and the formula relating $\bar \mu_t$ and $\bar \mu_t^y$ then follows from Theorem \ref{thm: structure of multi-to one-dimensional transport}.
 Finally, the asserted structure $\nabla \phi^y_t=(1-t)I +t\nabla \phi^y$ of the Brenier map between between $\mu_0^y$ and $\mu_t^y$ then follows by standard optimal transport arguments applied to the parallel affine subspaces $X_1(y,Dv_0(y))$ and  $X_1(y,Dv_1(y))$.

\end{proof}

\section{Convexity properties}\label{sect: convexity}
We now turn our attention to  certain convexity properties of $W_\nu$ geodesics.  We begin by studying the convexity of various functionals along the geodesics, and then show that the set of measures satisfying a slight strengthening of the generalized nested condition is geodesically convex.

\subsection{Geodesic convexity of functionals}

We consider the functionals
\begin{eqnarray}
\mu& \mapsto& W_2^2(\mu,\nu) \label{eqn: wass distance}\\
\mu &\mapsto& \int_X V(x)d\mu(x) \label{eqn: potential energy}\\
\mu &\mapsto &\int_{X^2} W(x-z)d\mu(x)d\mu(z)\label{eqn: interaction energy}
\end{eqnarray}
and 
\begin{equation}
\label{eqn: internal energy}
\mu \mapsto \left\{
  \begin{array}{l l}
    \int U(\bar \mu(x) )dx &\quad \text{ if } d\mu =\bar \mu(x)dx \text{ is a.c. wrt Lebesgue measure on }X\\
    +\infty & \quad \text{ otherwise.}
  \end{array}\right.
\end{equation}
Convexity of each of these functionals along more classical interpolants has been studied extensively.  Under suitable conditions, specified below, functionals \eqref{eqn: potential energy} -- \eqref{eqn: internal energy} were first shown to be convex along Wasserstein geodesics (that is, displacement convex) by McCann \cite{mccann1997convexity}.  Functionals \eqref{eqn: wass distance} -- \eqref{eqn: interaction energy} were shown to be convex along \emph{every} generalized geodesic with base measure $\nu$ in \cite{AmbrosioGradientFlows2008}.  Functional \eqref{eqn: internal energy}, on the other hand, is not convex along every generalized geodesics, but it is shown in Proposition 9.3.1 of \cite{AmbrosioGradientFlows2008} that for each $\mu_0,\mu_1 \in \mathcal{P}(X)$, there exists a generalized geodesic joining $\mu_0$ to $\mu_1$ along which it is convex.  In particular,  when the base measure $\nu$ is absolutely continuous with respect to Lebesgue measure, and so the generalized geodesic between any $\mu_0$ and $\mu_1$ is unique, \eqref{eqn: internal energy} is convex along every generalized geodesics, but this is often not true for singular base measures.

 We will show that these functionals are geodesically convex  with respect to the metric $W_\nu$ (under the usual conditions, specified below). Explicitly, geodesic convexity means that $\mathcal F(\mu_t)$ is a convex function of $t \in [0,1]$, where $\mathcal F:\mathcal P(X) \rightarrow \mathbb{R}$ is any of \eqref{eqn: interaction energy}, \eqref{eqn: internal energy}, \eqref{eqn: potential energy} or \eqref{eqn: wass distance}.  We will restrict our attention to $W_\nu$ geodesics $\mu_t$ of the form in Proposition \ref{prop: geodesics}; that is, $\mu_t =(te_0+(1-t)e_1)_\#\gamma$, where $\gamma$ is optimal in \eqref{eqn: metric definition}, under the assumption that each $\mu_t \in \mathcal{P}^u_\nu(X)$, since in more general situations the existence of a minimizing $W_\nu$ geodesic in the metric space $\mathcal{P}_\nu^u$ joining $\mu_0$ and $\mu_1$ is not clear.  We are particularly interested in the case when the endpoints $\mu_0$ and $\mu_1$ and reference measure $\nu$ satisfy the assumptions in Theorem \ref{thm: graphical solutions}, under which the existence of a unique geodesic follows from Lemma \ref{lem: structure of geodesics}. 

	Since the geodesics we consider  for $W_\nu$ are always generalized geodesics, the following result follows immediately from Lemma 9.2.1 and Propositions 9.3.2 and 9.3.5 \cite{AmbrosioGradientFlows2008}.
\begin{proposition}
	The following functionals are  convex along all minimizing $W_\nu$ geodesics of the form in Proposition \ref{prop: geodesics}, under the corresponding conditions:
	\begin{enumerate}
		\item \eqref{eqn: wass distance} is geodesically  1-convex for any $\nu$.
		\item \eqref{eqn: potential energy} is geodesically convex if $V$ is convex.  It is geodesically strictly convex if $V$ is strictly convex.
		\item \eqref{eqn: interaction energy} is geodesically convex if W is convex.  It is geodesically strictly convex along the subset of measures with the same barycenter  if W is strictly convex.
	\end{enumerate}
\end{proposition}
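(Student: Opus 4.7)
The plan is to reduce all three assertions to standard convexity results for generalized geodesics, exploiting that every $W_\nu$ geodesic of the form $\mu_t=((1-t)e_0+te_1)_\#\gamma$ appearing in Proposition \ref{prop: geodesics} is, by virtue of the defining property $\gamma_{yx_i}\in\Pi_{opt}(\nu,\mu_i)$, a generalized geodesic with base $\nu$ in the sense of Definition \ref{def: generalized geodesics}. Any convexity property known to hold along every generalized geodesic therefore transfers verbatim to every $W_\nu$ geodesic of this form, and one need only identify the resulting expressions with quantities involving $W_\nu$.

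For (1), I would expand pointwise
$$|(1-t)x_0+tx_1-y|^2=(1-t)|x_0-y|^2+t|x_1-y|^2-t(1-t)|x_0-x_1|^2$$
and integrate against the optimal $\gamma$ from \eqref{eqn: metric definition}. Since $((1-t)e_0+te_1,e_2)_\#\gamma\in\Pi(\mu_t,\nu)$ is an admissible (but a priori not optimal) coupling, its cost bounds $W_2^2(\mu_t,\nu)$ from above; the optimality of the two-fold marginals $\gamma_{yx_i}$ identifies $\int|x_i-y|^2d\gamma=W_2^2(\mu_i,\nu)$, while the definition of $W_\nu$ identifies $\int|x_0-x_1|^2d\gamma=W_\nu^2(\mu_0,\mu_1)$. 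This yields
$$W_2^2(\mu_t,\nu)\leq(1-t)W_2^2(\mu_0,\nu)+tW_2^2(\mu_1,\nu)-t(1-t)W_\nu^2(\mu_0,\mu_1),$$
which is precisely the $1$-convexity asserted and is the content of Lemma 9.2.1 of \cite{AmbrosioGradientFlows2008} in our setting.

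For (2), direct substitution gives $\int V\,d\mu_t=\int V((1-t)x_0+tx_1)\,d\gamma(y,x_0,x_1)$, and pointwise convexity (respectively strict convexity) of $V$ under the integral yields convexity (resp.\ strict convexity) in $t$; the latter uses that whenever $\mu_0\neq\mu_1$, the marginal $\gamma_{x_0x_1}$ must charge $\{x_0\neq x_1\}$. For (3), employing the product $\gamma\otimes\gamma$ one writes
$$\int_{X^2}W(x-z)\,d\mu_t(x)\,d\mu_t(z)=\int W\bigl((1-t)(x_0-z_0)+t(x_1-z_1)\bigr)\,d(\gamma\otimes\gamma),$$
and pointwise convexity of $W$ gives convexity in $t$; strict convexity on the subset of measures sharing a common barycenter follows along the lines of Proposition 9.3.5 of \cite{AmbrosioGradientFlows2008}, since pointwise equality would force $x_0-z_0=x_1-z_1$ $(\gamma\otimes\gamma)$-a.e., and the barycenter constraint rules out an affine shift between these two random vectors, forcing $\mu_0=\mu_1$.

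There is no substantive obstacle here: all three statements are instances of convexity along arbitrary generalized geodesics, which is developed in Chapter 9 of \cite{AmbrosioGradientFlows2008}. The only two points deserving attention are (i) matching the coefficient $-t(1-t)$ in (1) to $W_\nu^2(\mu_0,\mu_1)$ rather than a generic transport cost, which is immediate from the definition of $W_\nu$ applied to the optimal $\gamma$; and (ii) verifying the strictness in (2) and (3) by excluding degenerate supports for $\gamma$. The genuinely new convexity work—namely, geodesic convexity of the internal energy \eqref{eqn: internal energy}, which is \emph{not} convex along all generalized geodesics—lies outside this proposition and requires the separate argument anticipated earlier in the paper.
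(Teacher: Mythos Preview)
Your proposal is correct and follows exactly the same approach as the paper: since $W_\nu$ geodesics of the form in Proposition \ref{prop: geodesics} are generalized geodesics with base $\nu$, the result is an immediate consequence of Lemma 9.2.1 and Propositions 9.3.2 and 9.3.5 of \cite{AmbrosioGradientFlows2008}. You have in fact spelled out the details (the pointwise identity for part (1), the identification of the $-t(1-t)$ coefficient with $W_\nu^2(\mu_0,\mu_1)$ via the optimality of $\gamma$, and the product-measure argument for part (3)) that the paper simply leaves to the citations.
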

The geodesic convexity of the internal energy is somewhat more involved; in particular, as discussed above, unlike the other three forms, it does not hold for all generalized geodesics. 
 Our proof, under additional restrictions on $\nu$, uses the structure of the transport map $G$ in a crucial way.

We assume that the reference measure $\nu$ is absolutely continuous with respect to $n$-dimensional Lebesgue measure on the $n$-dimensional submanifold $Y$ (using the convention in Remark \ref{rem: lower dimensional convention}) and consider only geodesics $\mu_t$ whose endpoints $\mu_0$ and $\mu_1$ are absolutely continuous with respect to $m$-dimensional Lebesgue measure on $X$ with densities with respective densities $\bar \mu_0$ and  $\bar \mu_1$. Letting $G$ be the optimal map in \eqref{eqn: metric definition}, guaranteed to exist by Theorem \ref{thm: graphical solutions}, we will use $x_t =(1-t)x_0+tG(x_0)$ to denote a point in the support of $\mu_t$. Lemma \ref{lem: structure of geodesics} then yields uniqueness of the geodesic $\mu_t$, and we adopt similar notation to Theorem \ref{thm: structure of multi-to one-dimensional transport} and Lemma \ref{lem: structure of geodesics}: $v_i$ is the Kantorovich potential for transport between $\mu_i$ and $\nu$, and $v_t =(1-t)v_0+tv_i$ plays the same role between $\mu_t$ and $\nu$.  Letting $T_t$ be the optimal transport map sending $\mu_t$ to $\nu$, set  $JT^y_t(x_t):=\frac{\sqrt{\det(Df^TDf(y))}}{\det[-D^2v_t(y)-x_t \cdot D^2f(y)]}$, and $\bar \mu_t^y(x_t) =\bar \mu_t(x_t)/(JT^y_t(x_t)\bar \nu(y))$ the density with respect to $(m-n)$-dimensional Hausdorff measure of the conditional probability $\mu_t^y$ after disintegrating $\mu_t$ with respect to $\nu$, using the map $T_t$; that is, $\mu_t(x_t) :=\mu_t^y(x_t)\otimes \nu(y)$.

We recall that  Lemma \ref{lem: structure of geodesics} imlies that  $\mu_t^y$ is the displacement interpolant between $\mu_0$ and $\mu_1$ and that the Brenier map between $\mu_0^y$ and $\mu_t^y$ is then given by $\nabla \phi^y_t=(1-t)I +t\nabla \phi^y$, where $\nabla \phi^y: T_0^{-1}(y) \rightarrow T_1^{-1}(y)$ is the Brenier map between $\mu_0^y$ and $\mu_1^y$.

\begin{lemma}\label{lem: formula for internal energy of interpolant}Assume that the reference measure $\nu$ is absolutely continuous with respect to $n$-dimensional Hausdorff measure on a smooth $n$-dimensional submanifold $Y$ and that $\mu_0$ and $\mu_1$ are absolutely continuous with respect to Lebesgue measure on $X$.
	The internal energy at the interpolant is given by
	\begin{equation}\label{eqn: int energy change of variables}
	\begin{split}
		&\int_XU(\bar \mu_t(x))dx=\\
		&\int_Y\frac{1}{\bar \nu(y)}\int_{T_0^{-1}(y)}U\Big(\frac{\bar \mu_0^y(x_0)\bar \nu(y)JT^y_t(\nabla\phi^y_t(x_0))}{\det(D^2\phi^y_t(x_0))}\Big)\frac{1}{JT^y_t(\nabla\phi^y_t(x_0))}\det(D^2\phi_t^y(x_0))d\mathcal H^{m-n}(x_0)dy 
		\end{split}
	\end{equation}

\end{lemma}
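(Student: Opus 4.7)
The plan is to obtain the formula by performing two successive changes of variables. First, since Lemma \ref{lem: structure of geodesics} shows that $\mu_t$, $\nu$, and the cost satisfy the hypotheses of Theorem \ref{thm: structure of multi-to one-dimensional transport}, with $T_t$ the optimal map from $\mu_t$ to $\nu$ and $\mu_t = \mu_t^y \otimes \nu(y)$ the associated disintegration, I would apply the change of variables formula \eqref{eqn: change of variables} to $\mu_t$ with the test function $h(x) = U(\bar\mu_t(x))/\bar\mu_t(x)$. This yields
\begin{equation*}
\int_X U(\bar\mu_t(x))\,dx = \int_Y \bar\nu(y)\,dy \int_{T_t^{-1}(y) \cap X(v_t^c)} \frac{U(\bar\mu_t(x_t))}{\bar\nu(y)\, JT^y_t(x_t)}\,d\mathcal{H}^{m-n}(x_t),
\end{equation*}
so the bulk integral over $X$ is decomposed into an iterated integral over $Y$ and the affine $(m-n)$-dimensional fibres $T_t^{-1}(y) \subseteq X_1(y, Dv_t(y))$.

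Next, I would push the inner integral from $T_t^{-1}(y)$ back onto $T_0^{-1}(y)$ using the Brenier map $\nabla \phi^y_t : T_0^{-1}(y) \to T_t^{-1}(y)$ provided by Lemma \ref{lem: structure of geodesics}. Since $\mu_t^y$ is the Wasserstein-geodesic (displacement) interpolant between $\mu_0^y$ and $\mu_1^y$ on parallel $(m-n)$-dimensional affine subspaces, McCann's Monge--Ampère-type change of variables gives the fibrewise identity
\begin{equation*}
\bar\mu_t^y(\nabla\phi^y_t(x_0))\,\det(D^2\phi^y_t(x_0)) = \bar\mu_0^y(x_0) \qquad \text{for } \mu_0^y\text{-a.e. } x_0 \in T_0^{-1}(y).
\end{equation*}
Combining this with the fibre density relation $\bar\mu_t(x_t) = \bar\mu_t^y(x_t)\, JT^y_t(x_t)\, \bar\nu(y)$ from Theorem \ref{thm: structure of multi-to one-dimensional transport} (applied at $t$), I can rewrite $\bar\mu_t(\nabla\phi^y_t(x_0))$ entirely in terms of $\bar\mu_0^y(x_0)$, $\det(D^2\phi^y_t(x_0))$, $JT^y_t(\nabla\phi^y_t(x_0))$ and $\bar\nu(y)$, which is precisely the argument of $U$ appearing in the stated formula.

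Finally, substituting $x_t = \nabla\phi^y_t(x_0)$ in the inner integral produces the Jacobian factor $\det(D^2\phi^y_t(x_0))\,d\mathcal{H}^{m-n}(x_0)$ in the surface measure, and collecting the $\bar\nu(y)$ factors and the $1/JT^y_t$ factor as they appear yields the claimed identity \eqref{eqn: int energy change of variables}. The main technical obstacle will be to justify this second change of variables with the correct regularity: the convex potential $\phi^y$ is initially only known to have a second derivative in the Alexandrov sense, so one must either invoke the standard fibrewise Monge--Ampère theory of McCann on each affine subspace (noting that $\mu_0^y$ and $\mu_1^y$ are absolutely continuous with respect to $\mathcal{H}^{m-n}$ by Theorem \ref{thm: structure of multi-to one-dimensional transport}) or appeal to an approximation argument; the rest of the computation is essentially bookkeeping of densities and Jacobians.
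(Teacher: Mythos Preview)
Your proposal is correct and follows essentially the same route as the paper: first decompose the integral over $X$ into an iterated integral over $Y$ and the fibres $T_t^{-1}(y)$ via the coarea-type identity \eqref{eqn: change of variables}, then change variables fibrewise using the Brenier map $\nabla\phi^y_t$ and the Monge--Amp\`ere relation $\bar\mu_0^y(x_0)=\bar\mu_t^y(\nabla\phi^y_t(x_0))\det D^2\phi^y_t(x_0)$. The paper handles the regularity issue you flag in exactly the way you suggest, by invoking the standard optimal transport theory on the parallel affine subspaces $X_1(y,Dv_i(y))$.
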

\begin{proof}
	Similarly to \eqref{eqn: change of variables}, we can write the internal energy as
	$$
	\int_XU(\bar \mu_t(x_t))dx_t =\int_Y\frac{1}{\bar \nu(y)}\int_{T_t^{-1}(y) \cap X(v_t^c)}U(\bar \mu_t^y(x_t)\bar \nu(y)JT^y_t(x_t))\frac{1}{JT^y_t(x_t)}d\mathcal H^{m-n}(x_t)dy.	
	$$
	The remainder of the proof is then a matter of rewriting the inner integral as an integral over $T_0^{-1}(y)$, which is standard.	 For fixed $y$, standard optimal transport theory applied to the parallel affine subspaces $X_1^c(y,Dv_i(y))$  implies that   the optimal map $\nabla \phi^y$ is differentiable $\mathcal{H}^{m-n}$ almost everywhere and the Monge-Ampere equation is satisfied for $\mathcal H^{m-n}$ almost every $x_0$:
	$$
	\bar \mu_0^y(x_0)=\bar \mu_1^y(\nabla \phi^y(x_0))\det D^2\phi^y(x_0)
	$$
	and similarly note that since $\nabla \phi^y_t = (1-t)I+t\nabla \phi^y: T_0^{-1}(y) \rightarrow T_t^{-1}(y)$ pushes $\mu_0^y$ forward to $\mu_y^t$ and is optimal, we have
	
	$$
	\bar \mu_0^y(x_0)=\bar \mu_t^y(\nabla \phi^y_t(x_0))\det D^2\phi^y_t(x_0)=\bar \mu_t^y(\nabla \phi^y_t(x_0))\det((1-t)I_{m-n}+t D^2\phi^y(x_0)).
	$$
	
	The result then follows by using the standard optimal transport change of variables formula on $X_1(y,Dv_0(y))$, using the change of variables $x_t=\nabla \phi^y_t(x_0) = (1-t)x_0+t\nabla \phi^y(x_0)$
\end{proof}
\begin{remark} It is natural to note that the factor $\det\Big(D^2\phi^y_t(x_0)\Big)/JT^y_t(\nabla\phi^y_t(x_0))$ appearing in \eqref{eqn:  int energy change of variables} is the determinant of the Jacobian matrix $DG_t(x_0)$ for the optimal mapping $G_t$ in Theorem \ref{thm: graphical solutions} between $\mu_0$ and $\mu_t$.  It may be helpful to note that the block diagonal matrix $M_t$ introduced in the proof of the following result  is \emph{not} equal to $DG_t(x_0)$.  $DG_t(x_0)$ is in fact block upper triangular, with the same diagonal blocks are $M_t$.  The key point in the proof is the observation that $DG_t(x_0)$ and $M_t$ have the same determinant;  it is then more convenient to think of $\det\Big(D^2\phi^y_t(x_0)\Big)/JT^y_t(\nabla\phi^y_t(x_0))$ as the determinant of $M_t$, since this matrix is positive definite and we can apply Minkowski's determinant inequality to it. 
\end{remark}
\begin{corollary}\label{cor: internal energy convexity}
	Under the assumption in the preceding Lemma, the internal energy \eqref{eqn: internal energy} is geodesically convex assuming $U(0)=0$ and $r\mapsto r^mU(r^{-m})$ is convex and non-increasing.
\end{corollary}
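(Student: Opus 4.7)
The plan is to show that the integrand appearing in the formula from Lemma \ref{lem: formula for internal energy of interpolant} is pointwise convex in $t$ for almost every $(y,x_0)$; once this is done, integration against the positive measure $\frac{1}{\bar\nu(y)}\,d\mathcal{H}^{m-n}(x_0)\,dy$ (which is independent of $t$) preserves convexity and the result follows. The strategy parallels McCann's classical displacement-convexity argument, with the additional twist that one must simultaneously handle a determinant on the fiber $X_1(y,Dv_t(y))$ of dimension $m-n$ and a determinant of an $n\times n$ matrix arising from $JT_t^y$.

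First I would unravel the $t$-dependence. Since $v_t=(1-t)v_0+tv_1$ and $\nabla\varphi^y_t(x_0)=(1-t)x_0+t\nabla\varphi^y(x_0)$, the denominator in $JT^y_t(\nabla\varphi^y_t(x_0))$ becomes $\det[(1-t)C_0+tC_1]$, where
$$
C_0:=-D^2v_0(y)-x_0\cdot D^2f(y),\qquad C_1:=-D^2v_1(y)-\nabla\varphi^y(x_0)\cdot D^2f(y),
$$
both of which are positive definite by Theorem \ref{thm: structure of multi-to one-dimensional transport}. Combined with $D^2\varphi^y_t(x_0)=(1-t)I_{m-n}+tD^2\varphi^y(x_0)$, which is positive definite since $\varphi^y$ is convex, this yields
$$
\frac{\det(D^2\varphi^y_t(x_0))}{JT^y_t(\nabla\varphi^y_t(x_0))}=\frac{\det(M_t)}{\sqrt{\det(Df^TDf(y))}},
$$
where $M_t:=(1-t)M_0+tM_1$ is block-diagonal with $M_0:=\diag(I_{m-n},C_0)$ and $M_1:=\diag(D^2\varphi^y(x_0),C_1)$, both positive definite $m\times m$ matrices. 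This algebraic identity is the content of the remark preceding the corollary.

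By Minkowski's determinant inequality, $g(t):=\det(M_t)^{1/m}$ is then a non-negative concave function of $t$. Writing $\alpha:=\sqrt{\det(Df^TDf(y))}$ and $\rho:=\bar\mu_0^y(x_0)\bar\nu(y)$, the integrand of \eqref{eqn: int energy change of variables} becomes $\alpha^{-1}\psi(g(t))$, where
$$
\psi(s):=s^m U\!\left(\tfrac{\rho\alpha}{s^m}\right)=\rho\alpha\cdot h\!\left(\tfrac{s}{(\rho\alpha)^{1/m}}\right),\qquad h(r):=r^m U(r^{-m}).
$$
By hypothesis $h$ is convex and non-increasing, hence so is $\psi$; composing a convex non-increasing function with a concave function yields a convex function, so $t\mapsto\psi(g(t))$ is convex. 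The hypothesis $U(0)=0$ guarantees continuity of the integrand across the negligible set where $g(t)$ may vanish, so the pointwise convexity extends $\mathcal{H}^{m-n}\otimes dy$-a.e.

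Integrating the convex integrand against the positive, $t$-independent measure $\frac{1}{\bar\nu(y)}\,d\mathcal{H}^{m-n}(x_0)\,dy$ preserves convexity, concluding the proof. The main obstacle, really the only non-routine step, is recognizing that the two separate determinants, one of size $m-n$ from the Brenier map along the fiber and one of size $n$ from the Jacobian $JT^y_t$, assemble into the single $m\times m$ determinant $\det(M_t)$ of a convex combination of positive definite matrices; this is precisely what couples the fiber and base contributions so that one application of Minkowski in dimension $m$ feeds into the classical McCann convexity inequality in exactly the right way.
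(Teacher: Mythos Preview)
Your proof is correct and follows essentially the same route as the paper: assemble the $(m-n)\times(m-n)$ fiber Hessian $D^2\phi^y_t$ and the $n\times n$ matrix $-D^2v_t(y)-x_t\cdot D^2f(y)$ into a single block-diagonal $m\times m$ matrix that is affine in $t$ and positive semidefinite, apply Minkowski's determinant inequality to get concavity of $\det(M_t)^{1/m}$, and then compose with the convex non-increasing map $r\mapsto r^m U(r^{-m})$. The only cosmetic difference is that the paper absorbs the scalar $\det(Df^TDf(y))^{1/(2n)}$ into the lower block so that $\det M_t$ equals the ratio $\det(D^2\phi^y_t)/JT^y_t$ directly, whereas you keep that factor outside as your $\alpha$; this has no effect on the argument.
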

\begin{proof}
	Note that $\det\Big(D^2\phi^y_t(x_0)\Big)/JT^y_t(\nabla\phi^y_t(x_0))$ is in fact the determinant of the $m \times m$ block diagonal matrix $M_t$ with an upper $(m-n) \times (m-n)$ block $D^2\phi^y_t(x_0)=(1-t)I_{m-n} +tD^2\phi^y(x_0)$ and a lower $n\times n$ block $\frac{1}{|\det(Df^TDf(y))|^{1/2n}}[-D^2v_t(y)-x_t \cdot D^2f(y)]$ whose determinant is $\frac{1}{JT^y_t(\nabla\phi^y_t(x_0)}=\frac{1}{|\det(Df^TDf(y))|^{1/2}}\det[-D^2v_t(y)-x_t \cdot D^2f(y)]$. Noting that  $x_t = \nabla \phi^y_t(x_0)$ interpolates linearly between $x_0$ and $x_1 =\nabla\phi^y_t(x_0)$, and the potential $v_t =(1-t)v_0 +tv_1$ for optimal transport between $\mu_t$ and $\nu$ interpolates linearly between $v_0$ and $v_1$, $M_t$ is affine in $t$.
	
	 Now, the first block $D^2\phi^y_t(x_0)$ of $M_t$ is clearly non-negative definite by convexity of $\phi^y$.   Note that since $v_t$ is the Kantorovich potential for transport between $\mu_t$ and $\nu$, we have for $y=T_t(x_t)$, $D^2v_t(y) \leq D^2_{yy}c(x_t,y) = x_t\cdot D^2f(y)$; therefore, the second block $\frac{1}{{(\det(Df^TDf(y)))^{1/{2n}}}}[-D^2v_t(y)-x_t \cdot D^2f(y)]$ of $M_t$ is also non-negative definite. It follows that
$M_t$ is as well, and Minkowski's determinant inequality implies that $(\det M_t)^{1/m}$ is concave as a function of $t$; the convexity of the integrand now follows by a standard argument.  Indeed, the integrand is the composition of the concave mapping $t \mapsto [\det((1-t)I_{m-n} +tD^2\phi^y(x_0))/JT^y_t(\nabla\phi^y_t(x_0))]^{1/m} =(\det[M_t])^{1/m}$ and the convex, non-increasing mapping $r \mapsto U(\frac{\bar \mu_0^y(x_0)\bar \nu(y)}{r^m})r^m$ and therefore convex.
\end{proof}
\begin{remark}An important special case occurs when the function $f$ is affine, in which case $c(x,y) =-x\cdot f(y)$ becomes an index cost.  As shown in Proposition \ref{prop: layerwise Wasserstein}, when $Y \subseteq \mathbb{R}$ is one dimensional, the metric $W_\nu$ becomes the  layerwise Wasserstein metric from \cite{KimPassSchneider19}, and Corollary \ref{cor: internal energy convexity} generalizes Corollary 4.2 there (which asserts convexity of the Shannon entropy along layerwise-Wasserstein geodesics).
\end{remark}


\begin{remark}
In the case where $f$ is affine, so that $Df(y)$ is constant, slightly improved convexity results can be obtained when we restrict our interpolations to the class $\mathcal P_{\nu,v}(X)$ of probability measures on $X$ whose Kantorovich potential corresponding to optimal transport to $\nu$ is a fixed $v$ (up to addition of a constant). (It is worth noting that in our context, although the potential $v$ does not uniquely determine the measure $\mu$, which would make $\mathcal P_{\nu,v}(X)$  a singleton, it is easy to see that $\mathcal P_{\nu,v}(X)$ is geodesically convex).

 In this case, since $v_t=v$ is constant, $X_1(y, Dv(y))=\{x:  x\cdot D f(y) =Dv(y)\}$ on which $\mu^y_t$ is supported is constant as well.  The curve $\nabla \phi_t^y(x_0) =(1-t)x_0+t\nabla \phi^y(x_0)$ then moves along $X_1(y, Dv(y))$, so that 
 $JT^y_t(\nabla\phi^y_t(x_0))$ is constant as a function of $t$, and so we can use concavity of $\det^{m-n}$ on the set of positive definite $(m-n) \times (m-n)$ dimensional matrices to obtain convexity of  \eqref{eqn: internal energy} as soon at $r\mapsto r^{m-n}U(r^{n-m})$ is convex and nonincreasing, a slight improvement over the classical condition.  

The set $\mathcal P_{\nu,v}(X)$ is potentially relevant in the root comparison applications in  \cite{KimPassSchneider19}.  In these applications, the main objective is to interpolate between the shapes, and as part of the analysis they are first rescaled so that they have uniform marginals in one (the vertical) direction; that is, one compares measures $\tilde \mu_0$ and $\tilde \mu_1$ on $[0,1] \times \mathbb{R}^{m-1}$, whose first marginals are Lebesgue.  In this case, if we set $\nu$ to be Lebesgue measure on the line segment $[0,1] \times\{0\}$, then optimal transport between $\mu_i$ and $\nu$ is given by the projection map $x \mapsto x^1$ and the Kantorovich potential on the $y$ side are $v_i(y)=y^2/2$ for both $i=0,1$.

Therefore, interpolating between these rescaled versions corresponds to working in  $\mathcal P_{\nu,v}(X)$, where $\nu$ is uniform measure on $\{0\} \times [0,1]$ and $v(y)=y^2/2$, and the slightly improved convexity results described above apply.
\end{remark}

\subsection{Geodesic convexity of the set of nested sources}
Assuming the $\nu$ concentrates on a smooth $n$-dimensional submanifold $Y$, we now turn our attention to the structure of the set of measures $\mu \in \mathcal P^u_\mu(X)$ such that optimal transport with quadratic cost between $\mu$ and $\nu$ satisfies (a strengthening of) the generalized nested condition.  We will show that this set is $W_\nu$ convex.

We adopt the convention in Remark \ref{rem: lower dimensional convention}; the cost is $c(x,y) =-x\cdot f(y)$, where $f:Y \rightarrow \mathbb{R}^m$ for some $Y \subset \mathbb{R}^n$. Note that this cost naturally extends to all $x \in \mathbb{R}^m$ using the same formula.  Our strengthening of the nestedness condition requries some additional notation.  For each $y\in Y$, decompose $x \in \mathbb{R}^m$ into $x=x^y_{||}+x^y_\perp$ with $x^y_{\perp}\cdot Df(y)=0$  and $x^y_{||}$ orthogonal to the null space of $Df(y)$.

We define the \emph{enhancement of $X$ by $y \in Y$} as

$$
X^y=\{x=x^y_{\perp} +x^y_{||}:x^y_\perp +\tilde x^y_{||}, \tilde x^y_\perp + x^y_{||} \in X \text{ for some } \tilde x^y_{||} \in [null(Df(y))]^\perp,\tilde x^y_{\perp} \in null(Df(y)) \}
$$
We say that  $(c,\mu,\nu)$ satisfies the \emph{enhanced generalized nestedness condition}  at $y$ if 
\[
X_1^y(y, Dv(y)) = \partial^cv(y):=\{x\in X^y: -x\cdot f(y)-v(y) \leq -x\cdot f(\tilde y)-v(\tilde y)\; \forall \tilde y \in Y \},
\]
where $v$ is the Kantorovich potential.
We say that $(c,\mu,\nu)$ satisfies the \emph{enhanced generalized nestedness condition} if it satisfies this condition at almost every $ y\in Y$.  Since $X \subseteq X^y$ for each $y$, $X_1(y, Dv(y)) \subseteq X_1^y(y, Dv(y))$, and so the enhanced generalized nestedness condition implies the generalized nestedness condition $X_1(y, Dv(y)) = \partial^cv(y)$ introduced in \cite{McCannPass20}.

\begin{proposition}
	Let $\nu$ be absolutely continuous with respect to $n$-dimensional Hausdorff measure on the $n$-dimensional submanifold $Y$.  Then the set of $\mu$ such that  $(c, \mu,\nu)$ is enhanced generalized nested at $y$ is geodesically convex with respect to the metric $W_\nu$.
	
	
\end{proposition}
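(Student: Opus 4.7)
The plan is to show that if $\mu_0$ and $\mu_1$ both satisfy the enhanced generalized nestedness condition at $y$, then so does their $W_\nu$-geodesic $\mu_t$. My starting point is Lemma \ref{lem: structure of geodesics}, which identifies the Kantorovich potential for optimal transport between $\mu_t$ and $\nu$ as the affine interpolation $v_t=(1-t)v_0+tv_1$; in particular $Dv_t(y)=(1-t)Dv_0(y)+tDv_1(y)$. The inclusion $\partial^c v_t(y)\subseteq X_1^y(y,Dv_t(y))$ is automatic, since differentiating the inequality defining $\partial^c v_t(y)$ at $\tilde y=y$ yields $-x\cdot Df(y)=Dv_t(y)$, so the work lies in the reverse inclusion.

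Given $x\in X_1^y(y,Dv_t(y))$, my strategy is to construct $x_0\in\partial^c v_0(y)$ and $x_1\in\partial^c v_1(y)$ with $(1-t)x_0+tx_1=x$. Once these are in hand, the $c$-subdifferential inequalities $x_i\cdot(f(\tilde y)-f(y))\leq v_i(y)-v_i(\tilde y)$ combine under the convex combination with weights $(1-t,t)$ to give $x\cdot(f(\tilde y)-f(y))\leq v_t(y)-v_t(\tilde y)$ for every $\tilde y\in Y$, which rearranges to exactly $x\in\partial^c v_t(y)$. To build the $x_i$, I decompose $x=x^y_\perp+x^y_{||}$ along the orthogonal splitting $\mathbb{R}^m=null(Df(y))\oplus[null(Df(y))]^\perp$, and set $x_i:=x^y_\perp+(x_i)^y_{||}$, where $(x_i)^y_{||}\in[null(Df(y))]^\perp$ is uniquely determined by $-(x_i)^y_{||}\cdot Df(y)=Dv_i(y)$. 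This is well-posed because the full rank assumption on $Df(y)$ makes $z\mapsto -z\cdot Df(y)$ a bijection from $[null(Df(y))]^\perp$ onto $\mathbb{R}^n$; linearity of the inverse then forces $(1-t)(x_0)^y_{||}+t(x_1)^y_{||}=x^y_{||}$, so $(1-t)x_0+tx_1=x$ as needed.

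The hard part, and the place where the enhancement in the hypothesis is essential, is verifying $x_i\in X^y$. The first condition in the definition of $X^y$ transfers from $x$ for free, since $(x_i)^y_\perp=x^y_\perp$. For the second condition, I plan to invoke the enhanced nestedness of $\mu_i$: the hypothesis $\partial^c v_i(y)=X_1^y(y,Dv_i(y))$ together with the fact that $v_i$ is a Kantorovich potential at a point $y$ in the support of $\nu$ ensures that $\partial^c v_i(y)$ is non-empty, so it contains some $\hat x_i$. The uniqueness argument above forces $(\hat x_i)^y_{||}=(x_i)^y_{||}$, and the membership $\hat x_i\in X^y$ then supplies precisely the $\tilde x^y_\perp\in null(Df(y))$ needed to certify $x_i\in X^y$. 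Once $x_i\in X^y$ is established, enhanced nestedness of $\mu_i$ yields $x_i\in X_1^y(y,Dv_i(y))=\partial^c v_i(y)$, and the argument concludes as outlined. It is worth emphasizing that the ordinary nestedness condition, which only sees $X$ rather than $X^y$, would fail to supply the auxiliary points required for this transfer step: the enhancement is tailored exactly to make the convex combination $x=(1-t)x_0+tx_1$ remain inside the enlarged domain.
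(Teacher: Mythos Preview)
Your argument is correct and follows essentially the same route as the paper: both proofs use that $v_t=(1-t)v_0+tv_1$ is the Kantorovich potential for $\mu_t$, write an arbitrary $x\in X_1^y(y,Dv_t(y))$ as $(1-t)x_0+tx_1$ with $x_i\in X_1^y(y,Dv_i(y))=\partial^cv_i(y)$, and then average the linear $c$-subdifferential inequalities. The paper packages the decomposition step into a single Minkowski-sum assertion $X_1^y(y,Dv_t(y))=(1-t)X_1^y(y,Dv_0(y))+tX_1^y(y,Dv_1(y))$, whereas you unpack this explicitly by building $x_i$ via the orthogonal splitting and checking membership in $X^y$; your verification that $(x_i)^y_{||}\in B$ via a witness $\hat x_i$ is exactly the nonemptiness needed to justify that Minkowski-sum claim, so your version is really the same argument with the details filled in.
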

\begin{proof}
 
	Given Kantorovich potentials $v_0$ and $v_1$ corresponding to $\mu_0$ and $\mu_1$, the potential $v_t:=(1-t)v_0+tv_1$ corresponds to the interpolant $\mu_t$.  
	Observe that, since $D_yc(x,y) =-x\cdot Df(y)$ is affine in $x$, the enhanced level sets $X_1^y(y, Dv_t(y)) = (1-t)X_1^y(y, Dv_0(y))+tX_1^y(y, Dv_1(y))$ is the Minkowski sum of the two endpoint enhanced level sets.  Letting $x_t =(1-t)x_0 +tx_1 \in X_1^y(y, Dv_t(y))$, with each $x_i \in X_1^y(y, Dv_i(y))$, we have by assumption that
	$$
	 -x_i\cdot f(y)-v_i(y) \leq x_i\cdot f(\tilde y)-v_i(\tilde y)
	$$
	for all $ \tilde y\in Y$.
	Taking the weighted average of these two inequalities yields
	\begin{eqnarray*}
	-(1-t)x_0\cdot f(y)-(1-t)v_0(y) -tx_1\cdot f(y)-tv_1(y)&\leq& -(1-t)x_0\cdot f(\tilde y)-(1-t)v_i(\tilde y)\\
	&&-tx_1\cdot f(\tilde y)-tv_1(\tilde y)\\
	-x_t\cdot f(y)-v_t(y) &\leq& x_t\cdot f(\tilde y)-v_t(\tilde y)
	\end{eqnarray*}
	for all $\tilde y \in Y$.  Thus, $x_t \in \partial^cv_t(y)$ as desired. 
	

\end{proof}

\section{Fixed point characterization of solutions to variational problems}\label{sect: fixed point}
In this section, we consider a fixed measure $\mu$ on the bounded domain $X \subseteq \mathbb{R}^m$, absolutely continuous with respect to Lebesgue measure with density $\bar \mu$, bounded above and below, and a fixed domain $Y:=(\underline y, \overline y) \subset \mathbb{R}$.  We are interested in the minimization problem \eqref{eqn: variational problem}.
 We recall that this problem belongs to a class arising in certain game theory problems, introduced in a series of papers by Blanchet-Carlier \cite{blanchet2014remarks, abgcmor,abgcptrl}, in which the fixed measure $\mu$ represents a distribution of players and $Y$ a space of strategies; minimizing measures $\nu$ in \eqref{eqn: variational problem} represent equilibrium distributions of strategies.    In general, the dimensions of $m$ and $n$ of the spaces of players and strategies represent the number of charateristics used to distinguish between them.  The case $m >n$ (including the case $n=1$ treated here) are of particular interest, since it is often the case that players are highly homogeneous, whereas the strategies available to them are less varied. 
	
	When $m=1$ (so that both players and strategy spaces are one dimensional), Blanchet-Carlier introduced a characterization of minimizers as fixed points of a certain mapping.  They then iterated that mapping as a way to compute solutions \cite{blanchet2014remarks}; however, they did not prove convergence of this scheme.  
	
	Here we introduce a similar fixed point characterization for $m \geq 1$.  Furthermore, we prove for any $m$, under certain assumptions, that the mapping is a contraction with respect to a strategically chosen metric, implying convergence of the iterative scheme; even for $m=1$, this is a new result.  The key to the proof is choosing a suitable metric; our choice is a slight variant of the dual metric $W^*_{\mu,\mathcal{L},c,1}$ introduced in Section \ref{sect:dual metrics}.



Choose $\underline d =\min_{x \in \bar X,y\in \bar Y}\frac{\partial c}{\partial y}(x,y), \overline d =\max_{x \in \bar X,y\in \bar Y}\frac{\partial c}{\partial y}(x,y) $. 

Let $k \in L^1\Big((\underline y, \overline y); [\underline d, \overline d]\Big)$, the set of $L^1$ functions on $(\underline y, \overline y)$ taking values in $[\underline d, \overline d]$, and set $v_k(y): =\int_{\underline y}^{y}k(s)ds$.  Note that  $L^1\Big((\underline y, \overline y); [\underline d, \overline d]\Big)$ is a complete metric space. Let

\begin{equation} \label{eqn: mapping potential to measure}
\bar \nu_k(y):=\frac{e^{-v_k(y)}}{\int_{\underline y}^{\overline y}e^{-v_k(s)}ds}.
\end{equation}

This is a probability density on $(\underline y, \overline y)$; we denote the corresponding measure by $\nu_k(y)$.   Note that \eqref{eqn: mapping potential to measure} then defines a mapping, $L^1\Big((\underline y, \overline y); [\underline d, \overline d]\Big) \rightarrow \mathcal P^{ac}((\underline y, \overline y))$, where  $\mathcal P^{ac}((\underline y, \overline y))$ is the set of absolutely continuous probability measures on  $(\underline y, \overline y)$:
\begin{equation}\label{eqn: mapping potential to measure 2}
	k \mapsto \nu_k
\end{equation}

We note that, for any $k\in L^1\Big((\underline y, \overline y); [\underline d, \overline d]\Big)$, we have
\begin{eqnarray}\label{eqn: lower bound on nu}
	\nu_k(\underline{y},y) &=&\int_{\underline y}^{y} \bar \nu_k(s)ds\nonumber\\
	&\geq& \int_{\underline y}^{y}\frac{\underline{d}e^{-\overline{d}(s-\underline{y})}}{1-e^{-\underline{d}(\overline{y}-\underline{y})}}ds\nonumber\\
	&=&\frac{\underline{d}(1-e^{-\overline{d}(y-\underline{y})})}{\overline{d}(1-e^{-\underline{d}(\overline{y}-\underline{y})})}.
\end{eqnarray}

Similarly,
\begin{equation}\label{eqn: lower bound on nu2}
\nu_k(y,\overline{y}) \geq \frac{\underline{d}(e^{-\overline{d}(y-\underline{y})}-e^{-\overline{d}(\overline{y}-\underline{y})})}{\overline{d}(1-e^{-\underline{d}(\overline{y}-\underline{y})})}.
\end{equation}


 We then note that, for a given $\nu \in \mathcal P^{ac}((\underline y, \overline y))$ the mass splitting property \eqref{eqn: mass splitting} selects a unique $k=k(y) \in [\underline d, \overline d]$ for each $y \in (\underline y, \overline y)$.  The resulting function $k(y)$ is then bounded and so clearly in $L^1\Big((\underline y, \overline y); [\underline d, \overline d]\Big)$.  We can then think of this process as defining a mapping $\mathcal P^{ac}((\underline y, \overline y)) \rightarrow L^1\Big((\underline y, \overline y); [\underline d, \overline d]\Big)$, $\nu \mapsto k(\cdot)$.  Composing this mapping with \eqref{eqn: mapping potential to measure 2} then yields a $F: L^1\Big((\underline y, \overline y); [\underline d, \overline d]\Big) \rightarrow L^1\Big((\underline y, \overline y); [\underline d, \overline d]\Big)$.  Explicitly, defining  $\tilde k:=F[k]$, we have


\begin{equation}\label{eqn: tilde k definition}
	\nu_k(\underline y, y):=\mu(X_\geq( y, \tilde k(y))).
\end{equation}


\begin{lemma}\label{lemma: solutions are fixed points}
	Let $\nu$ be a minimizer of \eqref{eqn: variational problem} 	such that $(c,\mu,\nu)$ is nested.  Then 	$k(y) =v'(y)$	is a fixed point of $F$, where $v$ is the corresponding Kantorovich potential.
\end{lemma}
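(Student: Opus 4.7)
The plan is to combine two pieces: the Euler--Lagrange condition satisfied by an optimizer $\nu$ of \eqref{eqn: variational problem}, and the mass splitting condition \eqref{eqn: mass splitting} which holds because $(c,\mu,\nu)$ is nested. Together they match the two defining relations of $F[k]=k$.

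First, I would derive the first-order optimality condition for \eqref{eqn: variational problem}. For an admissible perturbation $\eta$ with $\int\eta\, dy=0$, the standard envelope-type derivative formula for the optimal transport cost with respect to its second marginal gives
\[
\tfrac{d}{d\epsilon}\Big|_{\epsilon=0}T_c(\mu,\nu+\epsilon\eta)=\int_Y v(y)\,\eta(y)\,dy,
\]
where $v$ is the $c$-concave Kantorovich potential on $Y$ (unique up to an additive constant given absolute continuity of $\nu$). The entropy contributes $\int_Y(\log\bar\nu(y)+1)\eta(y)\,dy$. Since $\eta$ ranges over all mean-zero test functions, this forces $v(y)+\log\bar\nu(y)$ to be constant on the support of $\nu$, i.e.
\[
\bar\nu(y)=\frac{e^{-v(y)}}{\int_{\underline y}^{\overline y}e^{-v(s)}\,ds}.
\]

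Next, set $k(y):=v'(y)$, which belongs to $L^1\bigl((\underline y,\overline y);[\underline d,\overline d]\bigr)$ because $v$ is $c$-concave and hence Lipschitz with slopes confined to the range of $\partial_y c(\cdot,y)$. Then $v_k(y)=\int_{\underline y}^y v'(s)\,ds=v(y)-v(\underline y)$, so the additive constant cancels out in the exponential normalization and
\[
\bar\nu_k(y)=\frac{e^{-v_k(y)}}{\int_{\underline y}^{\overline y}e^{-v_k(s)}\,ds}=\frac{e^{-v(y)}}{\int_{\underline y}^{\overline y}e^{-v(s)}\,ds}=\bar\nu(y).
\]
In particular $\nu_k=\nu$ as probability measures on $Y$, so $\nu_k((\underline y,y))=\nu((\underline y,y))$ for every $y$.

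Finally, I would invoke nestedness. Because $(c,\mu,\nu)$ is nested with Kantorovich potential $v$, the standard characterization of nested solutions recalled in the excerpt (mass splitting together with $k=v'$) yields
\[
\mu\bigl(X^c_{\ge}(y,k(y))\bigr)=\nu((\underline y,y))=\nu_k((\underline y,y)).
\]
By the defining relation \eqref{eqn: tilde k definition} of $\tilde k=F[k]$, one also has $\mu(X^c_{\ge}(y,\tilde k(y)))=\nu_k((\underline y,y))$. Hence $\mu(X^c_{\ge}(y,k(y)))=\mu(X^c_{\ge}(y,\tilde k(y)))$ for a.e.\ $y$, and the strict monotonicity of $k\mapsto\mu(X^c_{\ge}(y,k))$ (ensured by $\mu$ being absolutely continuous with density bounded above and below, together with the non-degeneracy of $D_yc$) gives $\tilde k(y)=k(y)$ a.e., i.e.\ $F[k]=k$.

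The only delicate step is the envelope identity $\delta T_c/\delta\nu=v$, which I would justify by using that the Kantorovich dual \eqref{eqn: ot dual} attains its maximum at $(u,v)$ and applying a standard one-sided comparison argument (as in the proofs of displacement convexity or in \cite{santambook}); all other steps reduce to direct algebraic manipulations and the monotonicity of the mass-splitting map.
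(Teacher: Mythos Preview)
Your proposal is correct and follows essentially the same approach as the paper: derive the Euler--Lagrange condition $v+\log\bar\nu=\text{const}$ to conclude $\nu_k=\nu$, then use nestedness to identify the mass-splitting function $\tilde k=F[k]$ with $v'=k$. The paper's version is terser (it simply cites \cite{santambook} for the first-order condition and states that nestedness makes $\tilde k$ coincide with $v'$), whereas you spell out the envelope argument, the cancellation of the additive constant in $v_k$, and the strict monotonicity step---but the underlying strategy is identical.
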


\begin{proof}
	The first order conditions corresponding to the variational problem are 
	$$
	v(y)+\log(\bar\nu(y)) =C
	$$
	for some constant $C$, $\nu$ almost everywhere (see, for example, Section 7.4 in \cite{santambook}), where  $\bar \nu$ is the density of minimizer $\bar \nu$ and $v$ the corresponding Kantorovich potential.
	This implies that that $\nu =\nu_k$.  
	
	Since $(c,\mu,\nu)$ is nested, the $\tilde k=F[K]$ defined by \eqref{eqn: tilde k definition} coincides with the derivative of the Kantorovich potential for optimal transport between $\mu$ and $\nu_k=\nu$, which is exactly $k$.
\end{proof}
We will prove that, under certain conditions, $F$ is a contraction.  The Banach fixed point theorem will then imply that $F$ has a unique fixed point.  Under conditions ensuring nestedness of the minimizer,  Lemma \ref{lemma: solutions are fixed points}, will then ensure that that fixed point is the minimizer of \eqref{eqn: variational problem}.

We need the following conditions on $X$, $c$ and $\mu$ to ensure $F$ is a contraction:
\begin{itemize}
	\item $A:=\max|D_xc_y(x,y)| <\infty$,
	\item $B:=\min\bar \mu(x)>0$,
	\item There is a $C >0$ such that, for all $y\in Y$,  $x\in X$, using $c_y(x,y)$ as a shorthand for $\frac{\partial c}{\partial y}(x,y)$,  
	\begin{equation}
	\mathcal H^{m-1}(X_1^c(y,c_y(x,y))) \geq C\min\big\{  \mu(X_{\geq}^c(y,c_y(x,y))), \mu(X_\leq^c(y,c_y(x,y)))\Big\} \label{cond: lower bound of level curves}.
	\end{equation}
\end{itemize}

	Let us discuss briefly the third condition above.  For any $x,y$, setting $p=c_y(x,y)$, the level set $X_1^c(y,p)$ is an $m-1$ dimensional submanifold, dividing the region $X$ into the sub and super level sets $X_{\geq}^c(y,p)$ and  $X_\leq^c(y,p)$.  The condition implies that if $X_1^c(y,p)$ is small, then at least one of  $\mu(X_{\geq}^c(y,p))$ or  $\mu(X_\leq^c(y,p))$ must also be (quantitatively) small.

In addition, we set $H(y) =\max\{\frac{|e^{(\overline{d} -2\underline{d})(y-\underline{y})}-1|}{|1-e^{-\overline{d}(y-\underline y)}|},\frac{ |e^{(\overline{d}-2\underline d)(\overline{y}-\underline{y})}-e^{(\overline{d}-2\underline d)(y-\underline{y}}|}{|(e^{-\overline{d}(y-\underline{y})}-e^{-\overline{d}(\overline{y}-\underline{y})}|} \}$.
\begin{theorem}\label{thm: contraction}
	Assume the three conditions above, and that
	$$
	\frac{2A\overline d^3(1-e^{ -\underline{d}(\overline y-\underline  y)})^2  }{BC\underline d^2 [1-e^{-\overline d(\overline y-\underline y)}]^2|\overline{d}-2\underline d|}\int_{\underline y}^{\overline y}H(y)dy <1.
	$$
	Then $F$ is a contraction.
	
\end{theorem}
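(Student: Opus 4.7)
My plan is to establish the contraction property through a pointwise-in-$y$ estimate that treats separately the mass-splitting inversion and the dependence of the CDF of $\nu_k$ on $k$. Fix $k_0, k_1 \in L^1((\underline y, \overline y); [\underline d, \overline d])$ and write $\tilde k_i := F[k_i]$. Since \eqref{eqn: tilde k definition} determines $\tilde k_i(y)$ as the solution in $p$ of the strictly decreasing equation $G_y(p) := \mu(X^c_\geq(y,p)) = \nu_{k_i}((\underline y, y))$, the mean-value theorem gives
$$|\tilde k_0(y) - \tilde k_1(y)| \leq \frac{|\nu_{k_0}((\underline y, y)) - \nu_{k_1}((\underline y, y))|}{\inf_p |G_y'(p)|},$$
where $p$ ranges between $\tilde k_0(y)$ and $\tilde k_1(y)$. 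By the coarea formula applied to the function $x \mapsto c_y(x,y)$, $|G_y'(p)| = \int_{X^c_1(y,p)} \bar\mu(x)/|D_x c_y(x,y)| \, d\mathcal H^{m-1}(x) \geq (B/A)\,\mathcal H^{m-1}(X^c_1(y,p))$. Condition \eqref{cond: lower bound of level curves} then bounds this below by $(BC/A)\min\{\mu(X_\geq^c(y,p)),\mu(X_\leq^c(y,p))\}$, and monotonicity of $G_y$ together with \eqref{eqn: lower bound on nu}--\eqref{eqn: lower bound on nu2} ensures that for $p$ between $\tilde k_0(y)$ and $\tilde k_1(y)$ the minimum on the right is bounded below by a multiple of $m(y):=\min\{1-e^{-\overline d(y-\underline y)},\, e^{-\overline d(y-\underline y)}-e^{-\overline d(\overline y - \underline y)}\}$, with the multiplicative constant $\underline d/[\overline d(1-e^{-\underline d(\overline y-\underline y)})]$.

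The more subtle step is to bound $|\nu_{k_0}((\underline y, y)) - \nu_{k_1}((\underline y, y))|$ linearly in $k_0 - k_1$. Setting $\phi_i(s) := e^{-v_{k_i}(s)}$, $A_i(y) := \int_{\underline y}^y \phi_i$, and $Z_i := A_i(\overline y)$, one has $\nu_{k_i}((\underline y,y)) = A_i(y)/Z_i$ and the basic identity
$$A_0(y)Z_1 - A_1(y)Z_0 = \int_{\underline y}^y \int_{\underline y}^{\overline y}[\phi_0(s)\phi_1(t) - \phi_1(s)\phi_0(t)]\,dt\,ds$$
simplifies, by antisymmetry of the integrand under $s \leftrightarrow t$ on $(\underline y, y)^2$, to an integral over $s \in (\underline y, y)$, $t \in (y, \overline y)$ only. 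Applying $|e^{-a} - e^{-b}| \leq |a-b| e^{-\min(a,b)}$ with $a = v_0(s)+v_1(t)$ and $b = v_1(s) + v_0(t)$, so that $|a - b| = |\Delta(s) - \Delta(t)|$ with $\Delta := v_0 - v_1$, and using $|\Delta(s) - \Delta(t)| \leq \int_s^t |k_0 - k_1|$ together with $v_i \geq \underline d(\cdot - \underline y)$, Fubini yields
$$|A_0(y) Z_1 - A_1(y) Z_0| \leq \int_{\underline y}^{\overline y} |k_0 - k_1|(u) \, K(u,y)\,du$$
for an explicit exponential kernel $K$ of the form $(1-e^{-\underline d(u-\underline y)})(e^{-\underline d(y-\underline y)}-e^{-\underline d(\overline y-\underline y)})/\underline d^2$ when $u\leq y$, and symmetrically when $u > y$. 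Dividing by $Z_0 Z_1 \geq [(1-e^{-\overline d(\overline y - \underline y)})/\overline d]^2$ furnishes the required pointwise bound on $|\nu_{k_0}((\underline y, y)) - \nu_{k_1}((\underline y, y))|$.

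Combining the two estimates, integrating in $y$, interchanging order of integration to move $|k_0 - k_1|(u)$ outside, and splitting via $1/\min(a,b) \leq 1/a + 1/b$ produces four $y$-integrals. I expect each of them to evaluate explicitly and to be bounded, uniformly in $u$, by one of the two fractions appearing inside $H(y)$, so that $\|\tilde k_0 - \tilde k_1\|_{L^1} \leq L \|k_0 - k_1\|_{L^1}$ with $L$ precisely the left-hand side of the smallness hypothesis; under this hypothesis, $L < 1$ and $F$ is a contraction. The anticipated main obstacle is precisely this concluding bookkeeping: antiderivatives of integrands such as $e^{(\overline d - \underline d)r}(1-e^{-\underline d r}) = e^{(\overline d - \underline d)r} - e^{(\overline d - 2\underline d)r}$ naturally produce exponentials with exponent $\overline d - 2\underline d$, explaining both the numerators of $H(y)$ and the factor $1/|\overline d - 2\underline d|$ in the denominator of the contraction constant; the factor $2$ in that constant reflects both the $1/a + 1/b$ splitting and the passage from a sum to the maximum defining $H$.
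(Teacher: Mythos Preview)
Your two-step skeleton (invert the mass-splitting via the coarea formula, then control the CDF difference of $\nu_{k_0}$ and $\nu_{k_1}$) is exactly the paper's strategy, and your treatment of the first step is essentially identical to the paper's. The difference is in the second step, and that is where your proposal has a gap.

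The paper does \emph{not} use your antisymmetric kernel/Fubini identity for $A_0(y)Z_1-A_1(y)Z_0$. It takes the much simpler route of bounding the \emph{density} difference $|\bar\nu_0(y)-\bar\nu_1(y)|$ pointwise, using only the crude estimate $\|v_0-v_1\|_{L^\infty}\le\|k_0-k_1\|_{L^1}$ together with the mean-value inequality for exponentials. This produces a bound of the form $|\bar\nu_0(y)-\bar\nu_1(y)|\le C\,e^{(\overline d-2\underline d)(y-\underline y)}\|k_0-k_1\|_{L^1}$, where the exponent $\overline d-2\underline d$ arises from $e^{\overline d(y-\underline y)}$ in the numerator (mean-value bound) against $e^{2\underline d(y-\underline y)}$ in the denominator (the two normalizing integrals). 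Integrating this in $s$ over $(\underline y,y)$ or $(y,\overline y)$ is precisely what generates the two fractions in $H(y)$ and the factor $1/|\overline d-2\underline d|$; the factor $2$ in the constant comes from the step $|v_0(y)-v_1(y)|+|v_0(s)-v_1(s)|\le 2\|v_0-v_1\|_{L^\infty}$. Also, the paper does not split via $1/\min(a,b)\le 1/a+1/b$: for each fixed $y$, condition \eqref{cond: lower bound of level curves} forces one of the two cases, and taking the maximum over both cases is what defines $H(y)$.

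Your kernel $K(u,y)$ involves only the exponent $-\underline d$, and dividing by $m(y)$ brings in only $\pm\overline d$; the integrand $e^{(\overline d-\underline d)r}(1-e^{-\underline d r})$ you invoke does not actually appear in your computation, so the heuristic for why $\overline d-2\underline d$ and the specific numerators of $H(y)$ should emerge is not justified. Even the prefactors already disagree (you would get $1/\underline d^3$ and a single power of $1-e^{-\underline d(\overline y-\underline y)}$ rather than the paper's $1/\underline d^2$ and a square). In short, your approach is a legitimate route to \emph{some} explicit Lipschitz constant for $F$, but the claim that it yields ``precisely'' the constant in the hypothesis is unsupported, and without that you have not proved the theorem as stated. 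The easy fix is to replace your kernel argument by the paper's pointwise density bound via $\|v_0-v_1\|_{L^\infty}\le\|k_0-k_1\|_{L^1}$, after which the bookkeeping is short and lands exactly on $H(y)$.
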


\begin{remark}
 The condition appearing in the statement of the theorem looks complicated; however, all quantities except for $B$ and $C$ can be computed using  the cost function alone ($B$ involves the reference marginal $\mu$  while $C$ involves both the cost and $\mu$).  Since the factor in front of the integral and the function $H(y)$ are bounded, the limit as $\overline{y} -\underline{y} \rightarrow 0$ of the left hand side above is $0$; therefore, the condition will always hold for sufficiently small intervals.  We illustrate these points in a simple example below, where we compute each of these quantites explicly. 
\end{remark}
\begin{example}

	Let $X = [-1,1]^2 \subset \mathbb{R}^2$, $Y=[0,\bar y] \subset \mathbb{R}$, and $c(x,y) =-x_1y$.  We take $\mu$ to be uniform measure, normalized to have total mass $1$, so that the density is $\bar \mu(x) =1/4$; clearly this means $B=1/4$.  We then have $c_y(x,y) =-x_1$   so that $\overline d =1$, $\underline d = -1$, and $D_xc_y(x,y) =-(1,0)$, so that $A=1$.  We also have that each $X_1^c(y,p) = \{x: x_1 =-p, \text{ }-1 <x_2<1\}$ is a vertical line and so $\mathcal{H}^{m-1}(X_1^c(y,c_y(x,y))) =\mathcal{H}^{1}(\{x: x_1 =-p, \text{ }-1 <x_2<1\}) =2$.  Since  $\mu(X_{\geq}^c(y,c_y(x,y)))+ \mu(X_\leq^c(y,c_y(x,y)) =1$, the minimum of the two values is at most $1/2$, so we can take $C=4$.

It is easy to check that $\frac{|e^{(\overline{d} -2\underline{d})(y-\underline{y})}-1|}{|1-e^{-\overline{d}(y-\underline y)}|} = \frac{e^{3y}-1}{1-e^{-y}}$ is increasing and so is bounded above by its value $\frac{e^{3\bar y}-1}{1-e^{-\bar y}}$ at the endpoint $\bar y$.  Similarly, $\frac{ |e^{(\overline{d}-2\underline d)(\overline{y}-\underline{y})}-e^{(\overline{d}-2\underline d)(y-\underline{y}}|}{|(e^{-\overline{d}(y-\underline{y})}-e^{-\overline{d}(\overline{y}-\underline{y})}|} = \frac{e^{3\bar y} -e^{3y}}{e^{-y}-e^{-\bar y}}$ is decreasing provided $\bar y$ is small enough, so it is bounded above by its value $ \frac{e^{3\bar y} -1}{1-e^{-\bar y}}$ at $y=0$.   Therefore, $H(y) \leq \frac{e^{3\bar y} -1}{1-e^{-\bar y}}$ and we have, for sufficiently small $\bar y$,
\begin{eqnarray*}
		\frac{2A\overline d^3(1-e^{ -\underline{d}(\overline y-\underline  y)})^2  }{BC\underline d^2 [1-e^{-\overline d(\overline y-\underline y)}]^2|\overline{d}-2\underline d|}\int_{\underline y}^{\overline y}H(y)dy &=& \frac{2 (1-e^{\bar y})^2}{\frac{1}{4} 4(1-e^{-\bar y})^23 }\int_0^{\bar y}H(y)dy \\
		& \leq & \frac{2 (1-e^{\bar y})^2}{3(1-e^{-\bar y})^2 }\bar y \frac{e^{3\bar y} -1}{1-e^{-\bar y}}
\end{eqnarray*}

This tends to $0$ as $\bar y$ does, since $\frac{ (1-e^{\bar y})^2}{(1-e^{-\bar y})^2 } \rightarrow  1$ and $\frac{e^{3\bar y} -1}{1-e^{-\bar y}} \rightarrow 3$, so that for sufficiently small $\bar y$, the condition in Theorem \ref{thm: contraction} holds.

\end{example}
\begin{remark}[Relationship to dual metrics]
	We show that $F$ is a contraction with respect to the $L^1$ metric.  Noting that the mapping $\nu \mapsto k$, where $k$ is defined by the mass splitting condition $\nu(\underline y, y) =\mu(X_\geq(y,k(y)))$ is a bijection on the set of non-atomic measures $\nu \in \mathcal P(Y)$ (and in particular on the set $\mathcal P^{ac}(Y)$ of measures which are absolutely continuous with respect to Lebesgue measure $\mathcal{L}$), we can consider the $L^1$ metric to be a metric on $\mathcal P^{ac}(Y)$.  If $(c,\mu,\nu)$ is nested, then $k(y)=v'(y)$ where $v$ is the Kantorovich dual potential.  In this case, this metric is exactly the dual metric  $W^*_{\mu,\mathcal L, c, 1}$ from Section \ref{sect:dual metrics}.  
\end{remark}
\begin{proof}

	Let $k_0,k_1 \in L^1\Big((\underline y, \overline y); [\underline d, \overline d]\Big)$. Then note that, setting  $v_i(y):=v_{k_i}(y)$,
		\begin{eqnarray}\label{eqn: difference of densities}
	|\bar \nu_0(y)-\bar \nu_1(y)|&=& \frac{e^{-v_0(y)}}{\int_{\underline y}^{\overline y}e^{-v_0(s)}ds}-\frac{e^{-v_1(y)}}{\int_{\underline y}^{\overline y}e^{-v_1(s)}ds}\nonumber\\
	&=&\Big |\frac{1}{\int_{\underline y}^{\overline y}e^{v_0(y)-v_0(s)}ds}-\frac{1}{\int_{\underline y}^{\overline y}e^{v_1(y)-v_1(s)}ds}\Big |\nonumber\\
	&=&\frac{|\int_{\underline y}^{\overline y}[e^{v_1(y)-v_1(s)}-e^{v_0(y)-v_0(s)}]ds|}{\int_{\underline y}^{\overline y}e^{v_0(y)-v_0(s)}ds\int_{\underline y}^{\overline y}e^{v_1(y)-v_1(s)}ds}\nonumber\\
	&\leq&\frac{\int_{\underline y}^{\overline y}|e^{v_1(y)-v_1(s)}-e^{v_0(y)-v_0(s)}|ds}{[e^{\underline d(y-\underline y)}\int_{\underline y}^{\overline y}e^{-\overline d(s-\underline y)}ds]^2}\nonumber\\
	&\leq&\frac{\overline d^2\int_{\underline y}^{\overline y}e^{\overline{d}(y-\bar y) -\underline{d}(s-\bar y)}[|v_0(y)-v_1(y)|+ |v_0(s)-v_1(s)|]ds}{[e^{\underline d(y-\underline y)}(1-e^{-\overline d(\overline y-\underline y)})]^2}\nonumber\\
	&\leq & \frac{2\overline d^2||v_0-v_1||_{L^\infty}e^{\overline{d}(y-\bar y) }\int_{\underline y}^{\overline y}e^{ -\underline{d}(s-\bar y)}ds}{[e^{\underline d(y-\underline y)}(1-e^{-\overline d(\overline y-\underline y)})]^2}\nonumber\\
	&\leq & \frac{2\overline d^2||v_0-v_1||_{L^\infty}e^{(\overline{d}-2\underline d)(y-\bar y) }(1-e^{ -\underline{d}(\overline y-\underline  y)})}{\underline d[1-e^{-\overline d(\overline y-\underline y)}]^2}\nonumber\\
	&\leq & \frac{2\overline d^2||k_0-k_1||_{L^1}e^{(\overline{d}-2\underline d)(y-\bar y) }(1-e^{ -\underline{d}(\overline y-\underline  y)})}{\underline d[1-e^{-\overline d(\overline y-\underline y)}]^2}.
	\end{eqnarray}
	Now, from the mass splitting definition of $\tilde k$, assuming without loss of generality that $\tilde k_0(y) \geq \tilde k_1(y)$, we compute, using the co-area formula
	\begin{eqnarray}\label{eqn: difference by pop. splitting}
	&&\int_{\underline y}^{y}[\bar \nu_1(s)-\bar \nu_0(s)]ds =\mu\{x:\tilde k_0(y) \geq c_y(x,y) \geq \tilde k_1(y)\}\\
	&&=\int_{\{x:\tilde k_0(y) \geq c_y(x,y) \geq \tilde k_1(y)\}}\bar \mu (x)dx\\
	&&= \int_{\tilde k_1(y)}^{\tilde k_0(y)}\big(\int_{X_=(y,k)}\frac{1}{|D_xc_y(x,y)|}\bar \mu(x)d\mathcal H^{m-1}(x)\big)dk\\
	&&\geq |\tilde k_0(y) -\tilde k_1(y)| \frac{1}{\max|D_xc_y(x,y)|}[\min\bar \mu(x)][\min_{\tilde k_1(y)\leq k\leq\tilde k_0(y)}\mathcal H^{m-1}(X_=(y,k))].\nonumber
	\end{eqnarray}
	Now, suppose  the minimum of $\mathcal H^{m-1}(X_=(y,k))$ over $k\in[\tilde k_1(y),\tilde k_0(y)]$ is attained at some $k$; by assumption \eqref{cond: lower bound of level curves}, either  
	\begin{equation}\label{eqn: sublevel lower bound}
	\mathcal H^{m-1}(X_=(y,k)) \geq C\mu(X_\leq (y,k))
	\end{equation} or  
	\begin{equation}\label{eqn: superlevel lower bound}
	\mathcal H^{m-1}(X_=(y,k)) \geq C\mu(X_\geq (y,k)).
	\end{equation}  We assume the second case (the proof in the first case is similar), in which case, we have
	$$
	\mathcal H^{m-1}(X_=(y,k)) \geq C\mu(X_\geq (y,k)) \geq C\mu(X_\geq (y,\tilde k_1(y))).
	$$
	Therefore, 
	\begin{eqnarray}
	\label{eqn: unitegrated inequality}
	&&|\tilde k_1(y) -\tilde k_0(y)|\leq \frac{2A\overline d^2||k_0-k_1||_{L^1}(1-e^{ -\underline{d}(\overline y-\underline  y)})  }{BC \underline d[1-e^{-\overline d(\overline y-\underline y)}]^2\mu[X_\geq(y, \tilde k_1(y))]}\int_{\underline y}^{y} e^{(\overline{d}-2\underline d)(s-\bar y)} ds\nonumber\\
	&&= \frac{2A\overline d^2||k_0-k_1||_{L^1}(1-e^{ -\underline{d}(\overline y-\underline  y)})  }{BC \underline d[1-e^{-\overline d(\overline y-\underline y)}]^2\mu[X_\geq(y, \tilde k_1(y))]}\frac{( e^{(\overline{d}-2\underline d)(y-\bar y)}-1) }{\overline{d}-2\underline d}\nonumber\\
	&&=\frac{2A\overline d^2||k_0-k_1||_{L^1}(1-e^{ -\underline{d}(\overline y-\underline  y)})  }{BC \underline d[1-e^{-\overline d(\overline y-\underline y)}]^2\nu_{ k_1}(\underline{y},y)}\frac{ (e^{(\overline{d}-2\underline d)(y-\bar y)}-1) }{\overline{d}-2\underline d}\nonumber\\
	&&\leq \frac{2A\overline d^3||k_0-k_1||_{L^1}(1-e^{ -\underline{d}(\overline y-\underline  y)})  }{BC\underline d^2 [1-e^{-\overline d(\overline y-\underline y)}]^2}\frac{( e^{(\overline{d}-2\underline d)(y-\bar y)}-1) }{\overline{d}-2\underline d}\frac{(1-e^{-\underline{d}(\overline{y}-\underline{y})})}{(1-e^{-\overline{d}(y-\underline{y})})}.\nonumber\\
	\end{eqnarray}
where we have used \eqref{eqn: lower bound on nu} in the last line.
If instead of \eqref{eqn: superlevel lower bound} we have \eqref{eqn: sublevel lower bound}, a very similar argument holds, in which $\mu[X_\geq(y, \tilde k_1(y))]$ and $\nu_{k_1}(\underline y,y)$ are replaced by  $\mu[X_\leq(y, \tilde k_0(y))]$ and $\nu_{k_0}(y,\overline y)$, the domain of integration in \eqref{eqn: difference by pop. splitting} is replaced by $[y,\overline y]$, and we use \eqref{eqn: lower bound on nu2} instead of \eqref{eqn: lower bound on nu}.  In this case, we obtain

$$
	|\tilde k_1(y) -\tilde k_0(y)| \leq \frac{2A\overline d^3||k_0-k_1||_{L^1}(1-e^{ -\underline{d}(\overline y-\underline  y)^2})  }{BC\underline d^2 [1-e^{-\overline d(\overline y-\underline y)}]^2}\frac{( e^{(\overline{d}-2\underline d)(\overline{y}-\underline{y})}-e^{(\overline{d}-2\underline d)(y-\underline{y})}) }{\overline{d}-2\underline d}\frac{1}{(e^{-\overline{d}(y-\underline{y})}-e^{-\overline{d}(\overline{y}-\underline{y})})}.
$$

 Therefore, for every $y \in (\underline{y},\overline{y})$, 
we have
$$
|\tilde k_1(y) -\tilde k_0(y)| \leq \frac{2A\overline d^3||k_0-k_1||_{L^1}[1-e^{ -\underline{d}(\overline y-\underline  y)}]^2 }{BC\underline d^2 [1-e^{-\overline d(\overline y-\underline y)}]^2|\overline{d}-2\underline d|} H(y).
$$ Integrating yields:
	$$
	||\tilde k_1 -\tilde k_0||_{L^1} \leq\Big( \frac{2A\overline d^3[1-e^{ -\underline{d}(\overline y-\underline  y)}]^2  }{BC\underline d^2 |\overline{d}-2\underline d|[1-e^{-\overline d(\overline y-\underline y)}]^2}\int_{\underline y}^{\overline y}H(y)dy\Big)||k_0-k_1||_{L^1}
	$$
	which is the desired result.
\end{proof}
As discussed above, together with the  Lemma \ref{lemma: solutions are fixed points}, condition \eqref{eqn: suff condition for nestedness} ensuring nestedness of the solution, and the Banach Fixed Point Theorem, this yields convergence of a simple iterative algorithm to compute the solution.
\begin{corollary}
Under the assumptions in Theorem \ref{thm: contraction}, and as well as assumption \eqref{eqn: suff condition for nestedness}, for any $k \in L^1\Big((\underline y, \overline y); [\underline d, \overline d]\Big)$, the sequence $F^n[k]$ converges to a function $k_{fixed}$ whose anti-derivative is the Kantorovich potential corresponding to the unique minimizer $\nu$ of \eqref{eqn: variational problem}.
\end{corollary}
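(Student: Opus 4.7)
The proof plan is to combine the Banach Fixed Point Theorem with Lemma \ref{lemma: solutions are fixed points} in a straightforward way. First, I would verify that $L^1\Big((\underline y, \overline y); [\underline d, \overline d]\Big)$ is a complete metric space under the $L^1$ distance (this is noted in the text just before \eqref{eqn: mapping potential to measure}), and that $F$ maps this set into itself, which is immediate from the definition: for any $k$ in this space, the value $\tilde k(y) = F[k](y)$ defined by \eqref{eqn: tilde k definition} satisfies $\tilde k(y) \in [\underline d, \overline d]$ because $c_y(x,y) \in [\underline d, \overline d]$ for all $x,y$, so any mass splitting level must lie in this range.

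Next, I would apply Theorem \ref{thm: contraction}, which ensures $F$ is a strict contraction on $L^1\Big((\underline y, \overline y); [\underline d, \overline d]\Big)$. The Banach Fixed Point Theorem then yields a unique fixed point $k_{\text{fixed}}$ and the convergence $F^n[k] \to k_{\text{fixed}}$ in $L^1$ for any starting $k$ in the space.

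It remains to identify $k_{\text{fixed}}$ with the derivative of the Kantorovich potential associated to the minimizer of \eqref{eqn: variational problem}. Existence of a minimizer $\nu$ to \eqref{eqn: variational problem} follows from standard direct-method arguments for the entropy-regularized optimal transport functional (lower semicontinuity of both $T_c$ and the entropy, compactness of $\mathcal{P}(Y)$). The assumption \eqref{eqn: suff condition for nestedness} is precisely what ensures $(c, \mu, \nu)$ is nested, as recalled from \cite{NennaPass1} just after \eqref{eqn: suff condition for nestedness}. Then Lemma \ref{lemma: solutions are fixed points} applies and tells us that $k := v'$, the derivative of the Kantorovich potential associated to $\nu$, is a fixed point of $F$. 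By uniqueness of the fixed point guaranteed by the contraction, we conclude $k_{\text{fixed}} = v'$, and consequently its anti-derivative (normalized to vanish at $\underline y$) is $v$.

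There is essentially no obstacle here beyond these bookkeeping points — the heavy lifting has already been done in Theorem \ref{thm: contraction} and Lemma \ref{lemma: solutions are fixed points}. The only subtlety worth emphasizing is that uniqueness of the minimizer $\nu$ of \eqref{eqn: variational problem} is inherited from uniqueness of the fixed point of $F$ together with the bijection $k \mapsto \nu_k$ given by \eqref{eqn: mapping potential to measure}; indeed, any minimizer must (by the first-order conditions used in Lemma \ref{lemma: solutions are fixed points}) satisfy $\nu = \nu_k$ for some fixed point $k$ of $F$, and there is only one such $k$.
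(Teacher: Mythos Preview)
Your proposal is correct and follows exactly the approach the paper indicates: the paper does not give a detailed proof of this corollary but simply states (in the sentence preceding it) that the result follows from Lemma \ref{lemma: solutions are fixed points}, the nestedness condition \eqref{eqn: suff condition for nestedness}, and the Banach Fixed Point Theorem. You have faithfully fleshed out precisely these steps, including the bookkeeping points (completeness of the space, $F$ mapping into itself, existence of a minimizer, and the identification of the fixed point with $v'$) that the paper leaves implicit.
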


\clearpage
\appendix
\section{Hierarchical metrics with multiple reference measures}\label{sect: multiple reference measures}
The $\nu$-based Wasserstein metric can be extended to align with several different probability measures in a hierarchical way.  Given $\nu_1,\dots,\nu_k \in \mathcal P(X)$, we define iteratively $\Pi^1_{opt} =\Pi_{opt}(\nu_1,\mu_i)$ and $\Pi^j_{opt}(\nu_1,...,\nu_j, \mu_i) \subseteq \mathcal \mathcal \mathcal P(X^{j+1})$ by 
$$
\Pi^j_{opt}(\nu_1,...,\nu_j, \mu_i) =\argmin_{ \substack{\gamma_{y_1,...y_{j-1}x_i} \in \Pi^{j-1}_{opt}(\nu_1,..., \nu_{j-1},\mu_i) \\ \gamma_{y_j}=\nu_j}} \int_{X^{j+1}}|x_i-y_j|^2d\gamma(y_1,....,y_j,x_i).
$$
A natural analogue of $W_\nu$ is then defined by:
\begin{equation}\label{eqn: hierarchical metric}
W^2_{\nu_1,...\nu_k}(\mu_0,\mu_1)=\inf_{\gamma_{y_1...y_kx_i} \in \Pi^k_{opt}(\nu_1,...,\nu_k, \mu_i), i=0,1}\int_{X^{k+2}}|x_0-x_1|^2d\gamma(y_1,....,y_k,x_0,x_1).
\end{equation}
As before, optimal couplings can be recovered as limits of multi-marginal problems, where the weights on the interaction terms reflect the hierarchy of the measures $\nu_1,...,\nu_k$ in the definition of $W_{\nu_1,...\nu_k}$:
\begin{proposition}
	Let $\nu_1,....,\nu_k,\mu_{0},\mu_1$ be probability measures on $X$.  Consider the multi-marginal optimal transport problem:
	\begin{equation}\label{eqn: multi-marginal multi-reference}
	\int_{X^{k+2}}c_\epsilon(x_0,x_1,y_1,y_2,...y_k)d\gamma
	\end{equation}
	where $c_\epsilon =\epsilon^{n-1}|x_0-x_1|^2 +\epsilon^{n-2}(|x_0-y_{n-1}|^2 +|x_1-y_{n-1}|^2)+\epsilon^{n-3}(|x_0-y_{n-2}|^2 +|x_1-y_{n-2}|^2)+....+\epsilon(|x_0-y_2|^2 +|x_1-y_2|^2)+(|x_0-y_1|^2 +|x_1-y_1|^2)$.
	Then any weak limit $\bar \gamma$ of solutions $\gamma_\epsilon$ is optimal in \eqref{eqn: hierarchical metric}
\end{proposition}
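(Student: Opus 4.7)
The proof extends the argument for part (2) of Theorem \ref{thm: charaterization of metric}, proceeding by induction on the hierarchical level to establish that the weak limit $\bar\gamma$ satisfies $\bar\gamma_{y_1\dots y_j x_i} \in \Pi^j_{opt}(\nu_1,\dots,\nu_j,\mu_i)$ for each $j = 1,\dots,k$ and $i=0,1$, the final stage giving the optimality claimed in the proposition. The underlying intuition is that $c_\epsilon$ behaves asymptotically as a lexicographic cost: its leading term $|x_0-y_1|^2+|x_1-y_1|^2$ forces optimality at the first level, then the next term (of order $\epsilon$) selects among those the optimizer at level two, and so on down to $|x_0-x_1|^2$ (weighted by the smallest power) at the last step.

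Write $c_\epsilon=\sum_{j=0}^{k-1}\epsilon^j \tilde c_j$ with $\tilde c_0=|x_0-y_1|^2+|x_1-y_1|^2$, $\tilde c_j=|x_0-y_{j+1}|^2+|x_1-y_{j+1}|^2$ for $1\le j\le k-2$, and $\tilde c_{k-1}=|x_0-x_1|^2$. For any competitor $\gamma$ in \eqref{eqn: hierarchical metric}, optimality of $\gamma_\epsilon$ gives
$$
\sum_{j=0}^{k-1}\epsilon^j\int \tilde c_j\,d\gamma_\epsilon \;\le\; \sum_{j=0}^{k-1}\epsilon^j m_j,
$$
where $m_j:=\int \tilde c_j\,d\gamma$ is the level-$j$ minimum (independent of $\gamma$ by the hierarchical structure). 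The base case is identical to the proof of Theorem \ref{thm: charaterization of metric}: taking $\epsilon\to 0$ and using boundedness-continuity gives $\int \tilde c_0\,d\bar\gamma\le m_0$, and the reverse Wasserstein inequality $\int \tilde c_0\,d\bar\gamma\ge m_0$ following from the marginal constraints yields $\bar\gamma_{y_1 x_i}\in\Pi_{opt}(\nu_1,\mu_i)$.

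For the inductive step, assume $\bar\gamma_{y_1\dots y_{j-1}x_i}\in\Pi^{j-1}_{opt}$ for $i=0,1$, so that $\bar\gamma$ is a valid level-$j$ competitor and hence $\int \tilde c_j\,d\bar\gamma\ge m_j$. I would subtract the relations $\int \tilde c_l\,d\gamma_\epsilon\to m_l$ for $l<j$ (consequence of previous levels) from the main inequality, divide by $\epsilon^{j}$, and pass to the $\limsup$ to obtain $\int \tilde c_j\,d\bar\gamma\le m_j$; combined with the reverse bound this gives equality and hence $\bar\gamma_{y_1\dots y_j x_i}\in\Pi^j_{opt}$. The same argument applied at $j=k-1$ yields $\int |x_0-x_1|^2\,d\bar\gamma\le \int |x_0-x_1|^2\,d\gamma$ for every competitor $\gamma$, which is precisely the optimality asserted in the proposition.

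The main obstacle lies in the inductive step, not the base case: for $j\ge 2$ one loses the pointwise sign control $\int \tilde c_j\,d\gamma_\epsilon\ge m_j$ that was available at $j=0$ (and that made the cancellation in Theorem \ref{thm: charaterization of metric} exact for every $\epsilon$), so the lower-order error terms must be managed carefully before division by $\epsilon^j$. The cleanest resolution is a $\Gamma$-convergence argument at each rescaled level: one shows that the rescaled functionals $\gamma\mapsto \epsilon^{-j}\bigl(\int c_\epsilon\,d\gamma-\sum_{l<j}\epsilon^l m_l\bigr)$ $\Gamma$-converge as $\epsilon\to 0$ to the restricted level-$j$ minimization problem, and convergence of the minimizers then delivers the hierarchical optimality of $\bar\gamma$ inductively in $j$.
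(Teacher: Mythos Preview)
Your approach is the same as the paper's: induct on the hierarchy level, compare $\gamma_\epsilon$ against a fixed hierarchically optimal competitor $\gamma$, peel off the lowest-order term, divide by $\epsilon$, and pass to the limit. The paper carries out the base case and one inductive step explicitly, then writes ``continuing inductively in this way yields the desired result.''

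You are right to flag the obstruction at $j\ge 2$, and in fact the paper's own proof glosses over exactly the same point. The first subtraction works because $\gamma$ minimizes $\int\tilde c_0$ over \emph{all} measures with the prescribed marginals, and $\gamma_\epsilon$ is such a measure, so $\int\tilde c_0\,d\gamma\le\int\tilde c_0\,d\gamma_\epsilon$ holds exactly for every $\epsilon>0$. At the next level one would analogously need $\int\tilde c_1\,d\gamma\le\int\tilde c_1\,d\gamma_\epsilon$; but $m_1=\int\tilde c_1\,d\gamma$ is only the minimum over measures whose $y_1x_i$-marginals lie in $\Pi_{opt}(\nu_1,\mu_i)$, a constraint $\gamma_\epsilon$ does not satisfy for $\epsilon>0$. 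There is no reason $\int\tilde c_1\,d\gamma_\epsilon\ge m_1$; one only knows $\int\tilde c_1\,d\gamma_\epsilon\to m_1$, and the error $\int\tilde c_1\,d\gamma_\epsilon-m_1$, of unknown sign, gets divided by $\epsilon$ at the next stage. So neither the paper's ``continuing inductively'' nor your direct subtraction closes the argument beyond level two.

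Your proposed remedy via a $\Gamma$-convergence hierarchy is the standard way to handle such lexicographic limits (this is essentially the route taken in \cite{CarlierGalichonSantambrogio10} for the Knothe--Rosenblatt case), but it requires substantively more work than either sketch provides: one must identify the $\Gamma$-limit of each rescaled functional and verify that its effective domain is precisely the constraint set $\{\gamma:\gamma_{y_1\dots y_{j-1}x_i}\in\Pi^{j-1}_{opt}\}$, which is where the real analysis lies.
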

\begin{proof}
	The proof is similar to the proof of the second part of Theorem \ref{thm: charaterization of metric}.  Letting $\gamma \in \Pi^k_{opt}(\nu_1,...,\nu_k, \mu_i)$ for $i=0,1$, optimality of $\gamma_\epsilon$ in \eqref{eqn: multi-marginal multi-reference} implies
\begin{equation}\label{eqn: optimality gamma epsilon multi-reference}
		\int_{X^{k+2}}c_\epsilon(x_0,x_1,y_1,y_2,...y_k)d \gamma_\epsilon \leq 	\int_{X^{k+2}}c_\epsilon(x_0,x_1,y_1,y_2,...y_k)d\gamma.
\end{equation}
	 Passing to the limit implies that $\bar \gamma \in \Pi_{opt}^1(\mu_i, \nu)$.  Now, combining the fact that $\gamma \in \Pi_{opt}^1(\mu_i, \nu)$ with \eqref{eqn: optimality gamma epsilon multi-reference} implies
\[\begin{split}
	 	&\int_{X^{k+2}}c_\epsilon(x_0,x_1,y_1,y_2,...y_k) - (|x_0-y_1|^2 +|x_1-y_1|^2)d\bar \gamma_\epsilon\\  &\leq 	\int_{X^{k+2}}c_\epsilon(x_0,x_1,y_1,y_2,...y_k)-(|x_0-y_1|^2 +|x_1-y_1|^2)d\gamma.
\end{split}
\]
	 Dividing by $\epsilon$ and taking the limit then implies that $\bar \gamma \in \Pi_{opt}(\nu_1,\nu_2, \mu_i)$ for $i=0,2$.  Continuing inductively in this way yields the desired result.
\end{proof}
As a consequence, we easily obtain the following result, which is similar to the main result in \cite{CarlierGalichonSantambrogio10}.
 \begin{corollary}
 	Let $k=n-1$, and $\nu_1,....\mu_{n-1}$ be probability measures concentrated on mutually orthogonal line segments, each absolutely continuous with respect to one dimensional Hausdorff measure.  Then solutions $\gamma_\epsilon$ to the multi-marginal optimal transport problem \eqref{eqn: multi-marginal multi-reference} converge weakly to $\gamma$, whose $1,2$ marginal is $(Id,G)_\#\mu_0$, where $G$ is the Knothe-Rosenblatt transport from $\mu_0$ to $\mu_1$.
 \end{corollary}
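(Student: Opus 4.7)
The plan is to invoke the preceding proposition to reduce to studying minimizers of the hierarchical problem \eqref{eqn: hierarchical metric}, then exploit the orthogonality of the reference measures together with an induction on $n$ to identify the $(x_0,x_1)$-marginal as the Knothe--Rosenblatt rearrangement. The preceding proposition already delivers that any weak limit $\bar\gamma$ lies in $\Pi^{n-1}_{opt}(\nu_1,\ldots,\nu_{n-1},\mu_i)$ for $i=0,1$ and minimizes $\int |x_0-x_1|^2\, d\gamma$ over this constrained set, so it suffices to compute the $(x_0,x_1)$-marginal of any such minimizer.

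First I would rotate coordinates so that each $\nu_j$ concentrates on the $j$-th coordinate axis $\{t e_j : t \in \R\}$ for $j=1,\dots,n-1$. This reduction is harmless since the quadratic cost is rotation invariant and the Knothe--Rosenblatt rearrangement relative to the rotated basis transforms accordingly. Under this assumption, $|x_i-y_j|^2$ reduces up to an $x_i$-dependent constant to $(x_i^j - y_j)^2$, so optimal transport from $\mu_i$ to $\nu_j$ becomes an index cost problem driven by the $j$-th coordinate of $x_i$, precisely as in the proof of Proposition \ref{prop: layerwise Wasserstein}.

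Next I would proceed by induction on $n$, taking as base case $n=2$ the corollary following Proposition \ref{prop: layerwise Wasserstein}. For the inductive step, I unwind the iterative definition \eqref{eqn: hierarchical metric} from the outside in. The outermost constraint forces $\bar\gamma_{y_1 x_i} \in \Pi_{opt}(\nu_1,\mu_i)$; by Theorem \ref{thm: structure of multi-to one-dimensional transport} applied to the index cost in the first coordinate, this is the unique monotone rearrangement of the first-coordinate marginal of $\mu_i$ with $\nu_1$. Disintegrating $\mu_i = \mu_i^{y_1} \otimes \nu_1(y_1)$, each conditional $\mu_i^{y_1}$ is supported on a hyperplane $\{x^1 = z_i(y_1)\}$ which we identify with $\R^{n-1}$, where $z_i$ is determined by the one-dimensional monotone matching. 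Within each such pair of hyperplanes the remaining hierarchical optimization decouples: the cost $|x_0 - x_1|^2 = (z_0(y_1) - z_1(y_1))^2 + \sum_{j \geq 2} (x_0^j - x_1^j)^2$ splits into a constant first-coordinate contribution plus an $(n-1)$-dimensional quadratic cost in the remaining coordinates, and the nested constraints involving $\nu_2,\dots,\nu_{n-1}$ on orthogonal axes lying in this hyperplane are exactly of the same form in dimension $n-1$. A measurable selection argument in the spirit of Lemma 12.4.7 of \cite{AmbrosioGradientFlows2008} (as used in the proof of Theorem \ref{thm: charaterization of metric}) then allows me to assemble a global minimizer of \eqref{eqn: hierarchical metric} from fiberwise minimizers of the $(n-1)$-dimensional hierarchical problem between $\mu_0^{y_1}$ and $\mu_1^{y_1}$, and conversely any global minimizer restricts to such fiberwise minimizers for $\nu_1$-a.e.\ $y_1$. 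Applying the inductive hypothesis fiberwise produces the Knothe--Rosenblatt rearrangement in coordinates $(x^2,\dots,x^n)$ between $\mu_0^{y_1}$ and $\mu_1^{y_1}$; combining with the monotone matching of first coordinates reconstructs exactly the full KR rearrangement $G$ on $\R^n$.

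The main obstacle I anticipate is making the fiberwise reduction rigorous, namely verifying that the iterated $\argmin$ structure of \eqref{eqn: hierarchical metric} genuinely splits across fibers of the first-coordinate monotone matching. Concretely, one must show that a plan $\gamma \in \mathcal{P}(X^{n+1})$ belongs to $\Pi^{n-1}_{opt}(\nu_1,\ldots,\nu_{n-1},\mu_i)$ if and only if the disintegration of $\gamma$ with respect to $\nu_1$ yields, for $\nu_1$-a.e.\ $y_1$, a plan on the pair of hyperplanes $\{x^1 = z_0(y_1)\} \times \{x^1 = z_1(y_1)\}$ that lies in the corresponding $(n-1)$-dimensional $\Pi^{n-2}_{opt}(\nu_2,\ldots,\nu_{n-1}, \mu_0^{y_1} \text{ or } \mu_1^{y_1})$. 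This is essentially a bookkeeping issue, inductive in nature, but deserves careful treatment to close the induction cleanly.
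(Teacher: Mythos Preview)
The paper does not supply a proof of this corollary beyond the remark that it follows ``easily'' from the preceding proposition, so there is no authorial argument to compare against directly. Your outline is natural, but the step you yourself flag as the ``main obstacle'' is not merely bookkeeping: the fiberwise reduction does not hold in the form you describe.

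The constraint $\gamma_{y_j}=\nu_j$ in the definition of $\Pi^j_{opt}$ is a \emph{global} marginal constraint. At level $2$, after rotating coordinates so that $\nu_j$ sits on the $j$-th axis, one minimizes $\int (x_i^2-y_2)^2\,d\gamma$ subject to $\gamma_{y_1x_i}$ fixed and $\gamma_{y_2}=\nu_2$; the minimizer couples $y_2$ monotonically with the \emph{unconditional} second-coordinate marginal of $\mu_i$, so that $y_2$ is a deterministic increasing function of $x_i^2$ alone. This is \emph{not} what one gets by solving the single-reference problem fiberwise in $y_1$: the fiberwise procedure would force the conditional $y_2$-marginal in every fiber to equal $\nu_2$, hence force the $(y_1,y_2)$-marginal of $\gamma$ to be the product $\nu_1\otimes\nu_2$, a property the actual level-$2$ minimizer need not have unless $x_i^1$ and $x_i^2$ happen to be independent under $\mu_i$. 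Thus the hierarchical problem does not reduce to an $(n-1)$-dimensional copy of itself on each first-coordinate hyperplane, and your induction cannot be closed in the way you propose. Iterating this observation, $\Pi^{n-1}_{opt}(\nu_1,\ldots,\nu_{n-1},\mu_i)$ forces each $y_j$ to be the monotone rearrangement of the \emph{unconditional} $j$-th marginal of $\mu_i$; if $\mu_0$ and $\mu_1$ have different dependence structure in their first $n-1$ coordinates, the two constraints on $\gamma$ impose incompatible $(y_1,\ldots,y_{n-1})$-marginals and the feasible set in \eqref{eqn: hierarchical metric} is empty. Either additional hypotheses on $\mu_0,\mu_1$ (e.g.\ a product structure in the first $n-1$ coordinates) are implicitly intended, or the argument must proceed along a different route than fiberwise induction.
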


\bibliographystyle{plain}
\bibliography{bibli}

\end{document}